\theoremstyle{plain}
\newtheorem{theorem}{Theorem}[section]
\crefname{theorem}{Theorem}{Theorems}
\crefname{proposition}{Proposition}{Propositions}
\newtheorem{corollary}[theorem]{Corollary}
\crefname{corollary}{Corollary}{Corollaries}
\newtheorem{lemma}[theorem]{Lemma}
\crefname{lemma}{Lemma}{Lemmas}
\crefname{conjecture}{Conjecture}{Conjectures}
\crefname{problem}{Problem}{Problem}
\crefname{claim}{Claim}{Claims}
\crefname{observation}{Observation}{Observations}
\crefname{setup}{Setup}{Setups}
\crefname{myth}{Myth}{Myths}
\crefname{fact}{Fact}{Facts}
\crefname{algorithm}{Algorithm}{Algorithms}
\newtheorem{remark}[theorem]{Remark}
\crefname{remark}{Remark}{Remarks}
\crefname{example}{Example}{Examples}
\theoremstyle{definition}
\newtheorem{definition}[theorem]{Definition}
\crefname{definition}{Definition}{Definitions}
\crefname{construction}{Construction}{Constructions}
\crefname{question}{Question}{Questions}
\numberwithin{equation}{section}
\setlist[enumerate,1]{label={\upshape (\roman*)}}
\DeclareMathOperator{\spath}{sp}
\DeclareMathOperator{\spathC}{spc}
\newcommand{\Fam}{\mathcal{F}}
\newcommand{\N}{\mathbb{N}}
\newcommand{\Tannoying}{T^a}
\DeclareMathOperator{\prob}{P}
\DeclareMathOperator{\expectedvalue}{E}
\DeclareMathOperator{\variance}{Var}
\author[Arrepol et al.]{Francisco Arrepol}
\author[]{Patricio Asenjo}
\author[]{Ra\'ul Astete}
\author[]{V\'ictor Cartes}
\author[]{Anah\'i Gajardo}
\author[]{Valeria Henríquez}
\author[]{Catalina Opazo}
\author[]{Nicol\'as Sanhueza-Matamala}
\author[]{Christopher Thraves Caro}
\address[Arrepol, Asenjo, Astete, Cartes, Gajardo, Henríquez, Opazo, Sanhueza-Matamala, Thraves Caro]{Departamento de Ingeniería Matemática, Facultad de Ciencias Físicas y Matemáticas, Universidad de Concepción, Chile.}
\email{\{farrepol2016,
pasenjo2018,
rastete2018,
vcartes2017,
angajardo,
vhenriquez2016,copazo2018,nsanhuezam,cthraves\}@udec.cl}
\thanks{The research leading to these results was supported by ANID-Chile through the FONDECYT Iniciación Nº11220269 grant (N.~Sanhueza-Matamala).}
\title{Separating path systems in trees}
\begin{document}

\begin{abstract}
For a graph $G$, an edge-separating (resp. vertex-separating) path system of $G$ is a family of paths in $G$ such that for any pair of edges $e_1, e_2$ (resp. pair of vertices $v_1, v_2$) of $G$ there is at least one path in the family that contains one of $e_1$ and $e_2$ (resp. $v_1$ and $v_2$) but not the other.

We determine the size of a minimum edge-separating path system of an arbitrary tree $T$ as a function of its number of leaves and degree-two vertices. We obtain bounds for the size of a minimal vertex-separating path system for trees, which we show to be tight in many cases. We obtain similar results for a variation of the definition, where we require the path system to separate edges and vertices simultaneously. Finally, we investigate the size of a minimal vertex-separating path system in Erd\H{o}s--Rényi random graphs.
\end{abstract}

\maketitle

\section{Introduction}
Consider a communication network where one can send a monitoring packet to verify the availability of connections and nodes. If the packet arrives at its destination, all components are working, while if the packet fails, we know that one component is faulty, but we need to know which one. The problem, then, is to identify the faulty component. 

We consider the problem of identifying network components using paths. More precisely, we consider the following problem: Given a graph and a set of elements of the graph (edges and vertices), provide a set of paths so that one can completely identify each element using the set of paths to which it belongs. We say that such family \emph{separates} the target elements. It is natural to think that each monitoring packet has a cost.
Therefore, we are also looking for a set of paths that separates the network elements and has a minimum size. 

We consider two types of graphs: trees and random graphs. We use the Erd\H{o}s-R\'enyi model for generating random graphs $G(n,p)$. This model generates a random graph on $n$ vertices by including each of the $\binom{n}{2}$ possible edges independently with probability $p$ each.

We organize our presentation as follows. In Section \ref{sec:notation}, we present the definitions and notation used throughout the document. In Section \ref{sec:relatedwork}, we discuss the related work, present our contributions, and place them in context. In Section \ref{sec:construct}, we present constructions and structural results that we use later in the document. Section \ref{sec:Tedges} contains our results regarding the separation of edges in trees. Section \ref{sec:Tvertices} describes our results concerning separating vertices in trees and vertices and edges together in trees. Section \ref{sec:randomgraphs} presents our results regarding random graphs. Finally, in Section \ref{sec:conclusions}, we present our conclusions and a discussion of open problems. 

\section{Notation and definitions}\label{sec:notation}
We only consider finite, simple, and undirected graphs. The vertices and edges of a graph $G$ will be denoted by $V(G)$ and $E(G)$, respectively. We use $V$ and $E$ if the context clarifies the graph. We use $T=(V,E)$ to denote a tree, \textit{i.e.}, an acyclic connected graph, and $n$ and $m$ to denote the size of $V$ and the size of $E$, respectively. 

We say that $u$ and $v$ are \emph{neighbors}, or \emph{adjacent} vertices in $G$, when the edge $\{u,v\}$ is in $E(G)$. The number of neighbors of a vertex $v$ in a graph $G$ is its \emph{degree}, and we denote it as $d_G(v)$. We use the standard notation $uv$ to denote the edge $\{u,v\}$. A sequence of distinct vertices $v_0v_1v_2 \dotsb v_l$ is a \emph{path} from $v_0$ to $v_l$ provided that $v_{i-1}v_{i}\in E(G)$ for each $i \in \{1,2,\dotsc , l\}$. The \emph{length} of a path is its number of edges. If $P=v_0v_1\dotsb v_l$ is a path in $G$ we say that $P$ \emph{contains} the vertices $v_i$ for all $i \in \{0, \dotsc, l\}$ and the edges $v_{i-1}v_{i}$ for all $i \in \{1,2,\dotsc , l\}$. We also say that $P$ is a \emph{$v_0$-$v_l$-path} and that $v_0$ and $v_l$ are the \emph{extremes} of $P$. Given a vertex $v$, we abbreviate `$P$ contains $v$' by writing $v \in P$, similarly for an edge $e$, `$e \in P$' means that $P$ contains $e$.

\begin{definition}
Let $G=(V,E)$ be a graph and $S\subseteq V\cup E$ be a set of elements of $G$. Let $\mathcal{F}$ be a family of paths in $G$. We say that $\mathcal{F}$ \emph{separates} $S$ if for every pair of elements $s$ and $t$ in $S$ there is a path $P$ in $\mathcal{F}$ such that 
\[
(s\in P \mbox{ and } t \not\in P)\, \mbox{ or } \, (s \not\in P \mbox{ and } t \in P).
\]
We say that a family $\mathcal{F}$ that separates $S$ also \emph{covers} $S$ if for all $s\in S$, there is a path
$P\in \mathcal{F}$ that contains $s$.
\end{definition}

A useful notation is the following. For $s\in S$, let $\mathcal{F}(s)\subseteq \mathcal{F}$ denote the set of paths in $\mathcal{F}$ that contain $s$.
Therefore, $\Fam$ covers $S$ if and only if $\mathcal{F}(s)\not = \emptyset$ for all $s \in S$, and $\Fam$ separates $S$ if and only if $\Fam(s)\not=\Fam(t)$ for all different $s$ and $t$ in $S$.

For brevity, if $\mathcal{F}$ is a family of paths that separates $S$, we say that $\mathcal{F}$ is an \emph{$S$-separating system}, and if it separates and covers $S$ we say that $\Fam$ is an \emph{$S$-separating-covering system}.
We will also say \emph{vertex-separating system} or \emph{edge-separating system} if $S$ is the set of vertices or edges of a graph, respectively.

Given a graph $G$ and a set of elements $S$ of $G$, we want to find $S$-separating systems and $S$-separating-covering systems of minimal size.
We define
\begin{eqnarray*}
\spath(G,S)&:=& \min\{|\mathcal{F}|\colon \mathcal{F} \mbox{ separates } S\}, \text{ and}\\
\spathC(G,S)&:=& \min\{|\mathcal{F}|\colon \mathcal{F} \mbox{ separates and covers } S\}.
\end{eqnarray*}
Thus, given a graph $G = (V,E)$, we naturally write $\spathC(G,V)$ and $\spathC(G,E)$ to refer to the minimum size of a $V$-separating-covering system or $E$-separating-covering system, respectively.
We use this notation even if $V$ or $E$ were not specified before.

Given a tree $T$, we use $h_1(T)$ and $h_2(T)$ to denote the number of leaves and the number of
vertices with degree $2$ in $T$, respectively.
We use simply $h_1$ and $h_2$ when $T$ is contextually clear.
We say that an edge $e=uv \in E(T)$ is an \emph{interior} edge if neither $u$ nor $v$ are leaves, and denote the set of all interior edges of a tree $T$ by $E^*(T)$.
A \emph{bunch of leaves}, or simply a \emph{bunch}, in a tree $T$ is a non-trivial connected component of the subgraph obtained from $T$ after deleting all interior edges (a non-trivial connected component of a graph is a connected component that is not an isolated vertex). 
The \emph{size of a bunch of leaves} is the number of leaves (in $T$) it contains.

A \emph{bare path} in a tree is a path such that only its two extreme vertices have a degree different from two.
Note that each tree $T$ has a unique set of bare paths that partitions $E(T)$.
Let $\mathcal{P} = \{P_1,\ldots, P_r\}$ be the family of bare paths in $T$ that partitions $E(T)$.
We use $\mathcal{I} \subseteq \mathcal{P}$ to denote the set of bare paths which do not include a leaf and have at least two edges.
We define the parameter
  \[
  h^\ast_2(T)=\sum_{P_i\in \mathcal{P} \setminus \mathcal{I}} \left(|E(P_i)|-1\right)   + \sum_{P_i\in \mathcal{I}} \left(|E(P_i)|-2\right).
  \]
Note that $0\leq h^\ast_2(T) \leq h_2(T)$.
Essentially, $h^\ast_2(T)$ counts every degree-$2$ vertex from each bare path containing a leaf; and counts all but one vertex of degree two from each bare path in $\mathcal{I}$.
From this, it holds that $h^\ast_2(T) = h_2(T) - | \mathcal{I}|$.

\section{Related work and our results}\label{sec:relatedwork}
\subsection{Related work}

The notion of separating families of paths originates from the older and more general concept of \emph{separating families of sets} introduced by Rényi~\cite{Renyi1961}.
Given a \emph{ground set} $S$, a \emph{separating family} is a collection of sets $A_1, \dotsc, A_m \subseteq S$ such that for every two distinct $i,j \in S$, there is a set $A_k$ such that either $i \in A_k$ and $j \notin A_k$, or $i \notin A_k$ and $j \in A_k$.

The problem becomes more intricate if we impose restrictions on the sets which are allowed.
A natural way of imposing restrictions is to take a graph. Consider the set of vertices or edges of the graph as a ground set, and look for separating families 
where the subsets must satisfy certain graph-theoretic restrictions.

Problems of this sort were considered in the theoretical computer science literature under multiple names, including ``identifying families'', ``test covers'', ``test families'', and ``test sets''; see~\cite {BHHHLRS2003} for an overview.
As explained before, the motivation was the detection of faulty processors in a network.
As far as we are aware, the use of separating families of paths was suggested first by Zakrevski and Karpovsky~\cite{ZakrevskiKarpovsky1998}.
Later, Honkala, Karpovsky, and Litsyn~\cite{HonkalaKarpovskyLitsyn2003}, independently Rosendahl~\cite{Rosendahl2003} considered both vertex and edge separation using cycles and investigated this problem in specific families of graphs, as hypercubes, complete bipartite graphs, and grids.

Foucaud and Kov\v{s}e~\cite{FoucaudKovse2013} considered the problem of finding families of paths which separate and cover the vertices of a graph (under the name ``identifying path covers'').
They considered algorithmic and combinatorial points of view.
On the algorithmic side, they showed that if the families of paths to be considered are also required to have bounded size, the associated optimization problem is APX-complete.
Using our notation, they obtained results for $\spathC(G, V)$ in various cases.

\begin{theorem}[\cite{FoucaudKovse2013}]
The following hold:
    \begin{enumerate}
        \item For an $n$-vertex clique $K$, $\spathC(K, V) = \lceil \log_2(n+1) \rceil$.
        \item For an $n$-vertex path $P$, $\spathC(P,V) = \lceil (n+1)/2 \rceil$.
        \item For an $n$-vertex cycle $C$, $\spathC(C,V) = n - 1$ if $3 \leq n \leq 4$, and $\spathC(C,V) = \lceil n/2 \rceil$ if $n \geq 5$.
        \item For any $n$-vertex connected graph $G$, $\spathC(G,V) \leq \lceil 2n/3 \rceil + 5$. 
    \end{enumerate}
\end{theorem}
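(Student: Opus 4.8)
The plan is to establish the four items separately. Items (i)--(iii) each follow the same template: a counting lower bound on $|\Fam|$, together with an explicit family meeting it; item (iv) is handled by a reduction to spanning trees followed by a recursive construction.

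For (i), the lower bound is the information bound: if $\Fam$ separates and covers $V$ then $v\mapsto\Fam(v)$ is injective into the nonempty subsets of $\Fam$, so $n\le 2^{|\Fam|}-1$. For the matching construction put $k=\lceil\log_2(n+1)\rceil$, choose distinct nonempty $A_1,\dots,A_n\subseteq\{1,\dots,k\}$, and let $P_j$ be any ordering of $\{v_i:j\in A_i\}$; since $K$ is complete each $P_j$ is a path, and $\Fam(v_i)=A_i$. For (ii), identify $V(P)$ with $\{1,\dots,n\}$ so that subpaths are intervals $[a,b]$, and assign to $[a,b]$ the two markers $a$ and $b+1$ in $\{1,\dots,n+1\}$; a family of $k$ subpaths carries $2k$ markers counted with multiplicity. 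If $j\in\{2,\dots,n\}$ carries no marker, then every subpath contains both or neither of $v_{j-1},v_j$, so these are unseparated; if $1$ (resp.\ $n+1$) carries no marker then $v_1$ (resp.\ $v_n$) lies in no subpath, so $\Fam$ fails to cover. Hence all $n+1$ positions carry a marker, so $2k\ge n+1$. The optimal family uses signatures $A_{2j-1}=\{j\}$ and $A_{2j}=\{j,j+1\}$ over the valid indices: the set $\{i:j\in A_i\}$ is the interval $\{2j-2,2j-1,2j\}\cap\{1,\dots,n\}$, hence a genuine subpath, the $A_i$ are distinct and nonempty, and the number of indices used is $\lceil(n+1)/2\rceil$.

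For (iii) with $n\ge 5$: subpaths of $C$ are arcs, and the cyclic version of the marker argument (each arc contributes two ends, and an uncovered gap between consecutive vertices leaves them unseparated) gives $|\Fam|\ge\lceil n/2\rceil$; conversely $\lceil n/2\rceil$ arcs on three consecutive vertices each, arranged to overlap consecutively in one vertex, yield signatures of the form $\{j\}$ and $\{j,j+1\}$ read cyclically, which are pairwise distinct exactly when $n\ge 5$ (for $n=4$ the would-be signatures $\{t,1\}$ and $\{1,2\}$ coincide). For $n\in\{3,4\}$ the binding constraint is instead the information bound $|\Fam|\ge\lceil\log_2(n+1)\rceil=n-1$, and the $n-1$ consecutive edges $v_1v_2,\dots,v_{n-1}v_n$ form a separating-covering family, so $\spathC(C,V)=n-1$.

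For (iv), monotonicity gives $\spathC(G,V)\le\spathC(H,V)$ whenever $H$ is a spanning subgraph of $G$, so taking $H$ a spanning tree it suffices to prove $\spathC(T,V)\le\lceil 2n/3\rceil+5$ for every $n$-vertex tree $T$. The plan for trees is recursive: repeatedly locate a pendant piece of $T$ on at least three vertices (near an endpoint of a longest path one finds either a bare path or a bunch of leaves attached to a common support vertex), remove it, extend a separating-covering family of the resulting smaller tree by at most two new paths covering and separating the removed vertices and distinguishing them from the rest, and iterate; charging two paths to each block of three vertices produces the ratio $2/3$, while the additive $5$ absorbs the base cases and the gluing. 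I expect the main obstacle to be exactly this amortization at high-degree vertices: for a star or spider a pendant piece is a single leaf, so one cannot peel three vertices while staying connected, and the ratio $2/3$ is only reached by pairing leaves through their common support vertex into overlapping three-vertex paths; this is precisely why the naive vertex-disjoint path-partition bound (which can be as bad as roughly $n$) fails and why the truth is $2n/3$ rather than $n/2$. Getting a single recursion to treat uniformly the path-like regime (many degree-two vertices, where the signatures $\{j\},\{j,j+1\}$ from (ii) are reused) and the spider-like regime (many leaves, handled by overlapping leaf--support--leaf paths) is the technical heart of the argument.
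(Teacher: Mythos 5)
This statement is quoted from Foucaud and Kov\v{s}e~\cite{FoucaudKovse2013}; the paper contains no proof of it, so your attempt can only be judged on its own terms. Items (i)--(iii) are essentially correct and are proved by the standard arguments: the injectivity of $v \mapsto \Fam(v)$ into nonempty subsets for the clique, and the ``marker''/endpoint-counting bound plus interval signatures $\{j\}, \{j,j+1\}$ for the path and the cycle. One small repair is needed in (iii): for odd $n \geq 5$ you cannot take $\lceil n/2 \rceil$ arcs of three vertices each overlapping consecutively in one vertex (for $n=5$ the arcs $\{1,2,3\},\{3,4,5\},\{5,1,2\}$ give $v_1$ and $v_2$ the same signature $\{1,3\}$); one arc must be shortened to two vertices, e.g.\ $\{1,2\},\{2,3,4\},\{4,5,1\}$, after which the signatures are again pairwise distinct. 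This is cosmetic, not structural.

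Item (iv) is the genuine gap. The reduction to spanning trees via monotonicity is fine, and your observation that newly attached vertices are automatically separated from all old vertices (they lie on no old path, while every old vertex does) means the gluing is less delicate than you fear. But the core claim --- that one can always peel a pendant piece of at least three vertices and re-cover and re-separate it with at most two added paths, uniformly across the path-like and spider-like regimes, with only an additive constant $5$ lost to base cases --- is asserted as a plan, and you yourself identify the amortization at high-degree vertices as ``the technical heart'' without carrying it out. A tree that mixes long bare paths with bunches of leaves (precisely the situation \Cref{theorem:edgevertexsep-trees} wrestles with via $h_2^\ast$) is where a single peeling rule is hardest to state, and until that case analysis is written down, item (iv) is not proved. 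As it stands the proposal establishes (i)--(iii) and reduces (iv) to an unproven lemma about trees.
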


Later, the problem also attracted the interest of the combinatorics community
when Katona (see \cite{FKKLN2014}) raised the problem of estimating the parameter $\spath(n)$, which is the maximum of $\spath(G, E)$ taken over all $n$-vertex graphs $G$.
Falgas-Ravry, Kittipassorn, Korándi, Letzter, and Narayanan~\cite{FKKLN2014} conjectured that $\spath(n) = O(n)$ and proved that $\spath(n) = O(n \log n)$ holds.
They also obtained results for graphs of large minimum degree, random graphs, certain families of quasi-random graphs, and trees.
We state their results for random graphs and trees which are relevant to our work.
Given a function $p: \N \rightarrow [0,1]$, a sequence of random graphs $G_n = G(n,p(n))$ and a sequence of events $E = \{ E_n\}_{n \in \N}$, we will say that ``$G(n,p(n))$ satisfies $E$ \emph{with high probability}'' if $\prob[G_n \in E_n] \rightarrow 1$ as $n \rightarrow \infty$.
Sometimes we abbreviate this to w.h.p.

\begin{theorem}[\cite{FKKLN2014}]
    The following hold:
    \begin{enumerate}
        \item For any $p = p(n)$, w.h.p., the random graph $G = G(n,p)$ satisfies $\spath(G,E) \leq 48n$.
        \item For every tree $T$ on $n \geq 4$ vertices, 
        $\left\lceil \frac{n+1}{3} \right\rceil \leq \spath(T,E) \leq \left\lfloor \frac{2(n-1)}{3} \right\rfloor$, and both bounds are attained with equality by some $n$-vertex tree.
    \end{enumerate}
\end{theorem}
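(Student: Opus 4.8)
Since the two statements are independent, I would prove them separately, and they call for rather different techniques.

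\emph{The tree bounds (part (2)), lower bound.} To show $\spath(T,E)\ge\lceil (n+1)/3\rceil$ I would use a refinement argument. Fix a separating system $\Fam=\{P_1,\dots,P_k\}$, reveal the paths one at a time, and track the partition of $E(T)$ into classes of edges that share a common partial code; at the end every class is a singleton. The key structural input is that $T$ is a tree: a path of $T$ meets any bare path in an interval of edges; two edges that are consecutive on a bare path can only be separated by a path with an endpoint at the degree-$2$ vertex between them; and the edges at a common vertex $v$ have restricted codes, since any path uses at most two of them. Keeping careful account of how many new singleton classes one new path can create should lead to an inequality of the shape $|E(T)|=n-1\le 3k-2$, which rearranges to the claim. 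The fussy part is pinning down the constant exactly: a crude bound such as $n\le 3k$ falls out easily, and squeezing it to $3k-2$ is where the local structure of the tree has to be used with care.

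\emph{The tree bounds, upper bound.} For $\spath(T,E)\le\lfloor 2(n-1)/3\rfloor$ I would give an explicit construction. Decompose $E(T)$ into a small number of paths $Q_1,\dots,Q_t$ of $T$ (for instance by pairing up leaves), then separate the edges chunk by chunk, using a \emph{fresh} block of path-indices for each chunk, so that edges in different chunks are automatically separated (their codes are nonempty subsets of disjoint index sets). Along a bare stretch $v_1v_2\cdots v_{\ell+1}$ a sliding family of short subpaths works: the three edges along $v_av_{a+1}v_{a+2}v_{a+3}$ get codes $\{A\},\{A,B\},\{B\}$ from the two subpaths $v_av_{a+1}v_{a+2}$ and $v_{a+1}v_{a+2}v_{a+3}$, costing about $\ell/2$ paths per $\ell$ edges. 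A bunch of $s$ leaves at a vertex is separated using about $2s/3$ paths, by assigning the leaf edges the codes $\emptyset,\{1\},\{2\},\dots$ together with a few size-two codes — exactly the cost of a star $K_{1,s}$. Since the bunch gadget, at two paths per three edges, is the most expensive, the grand total is $\lfloor 2(n-1)/3\rfloor$. The real work is the global bookkeeping: the code $\emptyset$ may be used only once overall, and chunks of size one or two cost more than their fair share, so leftover short pieces must be merged into their neighbours or rerouted (for instance, running a path of $T$ through a branch vertex so that it also covers a leaf edge that would otherwise get an empty code). Finally, the stars $K_{1,n-1}$ attain the upper bound with equality, and a suitable smaller tree witnesses the lower bound.

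\emph{The random graph bound (part (1)).} Here I would split into ranges of $p=p(n)$. When $p\le 48/(n-1)$ we have $\expectedvalue|E(G)|=\binom{n}{2}p\le 24n$, so $|E(G)|\le 48n$ w.h.p.\ by a concentration/Markov estimate; and since $\spath(G,E)\le|E(G)|$ always (take one single-edge path per edge), this case is finished. When $p$ is large enough, w.h.p.\ $G(n,p)$ is a strong expander — Hamiltonian, of bounded diameter, with many internally disjoint short paths between every pair of vertices — and the plan is to exploit this flexibility to build a separating system of only $O(n)$ paths: decompose $E(G)$ into $O(n)$ paths, separate within each piece cheaply, and use short shortcut paths through the expander to realise enough distinct edge-codes globally while spending only $O(1)$ extra paths per piece on average. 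I expect this dense/expander regime — together with the intermediate range where $G$ is neither sparse enough for the trivial edge bound nor a strong enough expander (here one isolates the giant component and treats it as a sparse expander) — to be the real obstacle; the explicit constant $48$ then comes out of optimising the threshold $48/(n-1)$ against the constants produced by the expansion estimates.
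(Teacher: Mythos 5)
First, a point of orientation: this statement is quoted from Falgas-Ravry, Kittipassorn, Kor\'andi, Letzter and Narayanan~\cite{FKKLN2014} as background material; the present paper does not prove it, so there is no in-paper proof to compare your attempt against. Judged on its own terms, your proposal is a plan rather than a proof, and it has genuine gaps. For the tree lower bound, the local observations you list (a separating path meets a bare path in an interval; consecutive edges on a bare path are separated only by a path ending at the degree-two vertex between them; at most one edge has the empty code) are the right ingredients, but the count that converts them into $n-1\le 3k-2$ is exactly what you defer (``should lead to'', ``the fussy part''), and that count \emph{is} the theorem: the crude version only gives $k=\Omega(n)$. The usual route is to show that, with at most a bounded number of exceptions, every leaf and every degree-two vertex must be an endpoint of some path, and that among leaves one needs roughly two path-ends per three leaves; combining $2k\ge h_1+h_2-O(1)$ and $3k\ge 2h_1+h_2-O(1)$ with $2h_1+h_2\ge n+2$ gives the stated constant. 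For the upper bound you correctly identify stars as extremal, but the ``global bookkeeping'' you acknowledge (the empty code usable only once, short leftover chunks) is not carried out, and no tree attaining the lower bound is exhibited, although the statement asserts both extremes are achieved.

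The most serious gap is in part (1) for dense $p$. When $p$ is bounded away from $0$ the graph has $\Theta(n^2)$ edges, and your plan --- decompose $E(G)$ into $O(n)$ paths and ``separate within each piece cheaply'' using subpaths and a few shortcuts per piece --- cannot work as stated: separating the $\Theta(n)$ edges lying on a single path of the decomposition by subpaths of it already costs $\Theta(n)$ paths (an interval system separating $m$ points needs about $m/2$ intervals), so the total would be $\Theta(n^2)$. The entire difficulty of the dense regime is that each of the $O(n)$ paths must simultaneously participate in the codes of $\Theta(n)$ different edges, i.e.\ one must realise an abstract separating code assignment on $\Theta(n^2)$ edges by globally reused paths; ``$O(1)$ extra paths per piece on average'' does not engage with this, and you concede the regime is ``the real obstacle''. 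A smaller issue: in the sparse range Markov's inequality only bounds the failure probability by $1/2$, not $o(1)$; you need a Chernoff-type bound, which is available since $|E(G)|$ is binomial with mean $O(n)$ there. So the sparse case is salvageable, but the dense and intermediate cases are missing, and with them the origin of the constant $48$.
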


Independently, Balogh, Csaba, Martin, and Pluhár~\cite{BCMP2016} introduced a different definition of ``strong'' edge-separating systems (which in particular are edge-separating).
They also conjectured that $O(n)$ paths should be enough to form a strong separating family of the edges of any $n$-vertex graph.
They investigated trees, cliques, random graphs, and hypercubes.
Wickes~\cite{Wickes2022} investigated edge-separating path systems in cliques, in particular proving that $\spath(K_n, E) \leq n$ for infinitely many values of $n$.
Letzter~\cite{Letzter2022} proved that $\spath(n) = O(n \log^\ast n)$, where $\log^\ast n$ is the iterated-log function.
Finally, Bonamy, Botler, Dross, Naia, and Skokan~\cite{BBDNS2023} showed that $\spath(n) \leq 19n$.
Their proof, in fact, gives strong separating path systems and thus resolves both the conjecture of Falgas-Ravry et al. and that of Balogh et al.

\subsection{Our results}
We investigate the parameters $\spathC(T,E)$, $\spathC(T,V)$ in more detail when $T$ is a tree.
Our first main result, proved in Section~\ref{sec:Tedges}, establishes the value of $\spathC(T,E)$ precisely in every case.
For most cases, we can express the result in terms of the number of leaves and vertices of degree $2$ of a given tree, but interestingly, the formula fails only in a single case.
Let $\Tannoying$ be the \emph{binary tree of depth $2$}, that is, the tree on the vertex set $\{1, 2, 3, 4, 5, 6, 7\}$ with edge set $\{ 12, 13, 24, 25, 36, 37 \}$.
Note that $h_1(\Tannoying) = 4$ and $h_2(\Tannoying) = 1$.

\begin{theorem} \label{theorem:edgesepcover-trees}
    We have $\spathC(\Tannoying, E) = 4$, and for every tree $T \neq \Tannoying$,
    \[ \spathC(T,E) = \max \left\lbrace \left\lceil \frac{2 h_1(T) + h_2(T)}{3} \right\rceil, \left\lceil \frac{h_1(T) + h_2(T)}{2} \right\rceil \right\rbrace. \]
\end{theorem}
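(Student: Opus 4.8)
The plan is to prove matching lower and upper bounds on $\spathC(T,E)$, with the anomalous tree $\Tannoying$ handled as a small separate case. For the lower bound, I would think of each path $P$ in a family $\mathcal{F}$ as assigning to every edge $e$ the subset $\mathcal{F}(e) \subseteq \mathcal{F}$ of paths containing it; separating-covering means these subsets are distinct and nonempty. Two sources of constraints push the bound up. First, a counting bound: a path in a tree has at most two endpoints, and at each vertex $v$ of degree $d$ it uses at most one pair of incident edges, so if we look at the ``local'' information each path gives around leaves and degree-$2$ vertices, each path can only be responsible for separating a bounded amount of structure there. Concretely, I expect that each path can ``newly cover/separate'' at most $3$ of the edges incident to leaves-or-degree-$2$-vertices in an amortised sense (it enters and leaves a bunch or a bare path in a limited way), which gives the $\lceil (2h_1 + h_2)/3 \rceil$ term; the pendant edges at a common vertex, being hardest to tell apart, force the $\lceil (h_1+h_2)/2 \rceil$ term, since a path through such a vertex distinguishes the two pendant edges it uses from the rest but the remaining pendant edges there must still be pairwise separated, costing one path per two of them roughly. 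I would make this precise by assigning weights/charges to leaves and degree-$2$ vertices and bounding the total charge any single path can absorb.

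For the upper bound, the natural approach is induction on the structure of $T$, peeling off bunches of leaves and bare paths. I would first handle the ``caterpillar-like'' base pieces: given a bunch of leaves of size $k$ attached at a vertex, one needs about $\lceil k/2 \rceil$ or $\lceil 2k/3 \rceil$ paths through that vertex to separate the pendant edges, and these paths can be threaded through the rest of the tree to simultaneously do useful work elsewhere. The parameter $h^\ast_2(T)$ is clearly designed for this: a bare path with $\ell$ edges containing a leaf contributes $\ell - 1$ degree-$2$ vertices that must be paid for, but an interior bare path with $\ell \geq 2$ edges contributes only $\ell - 2$, reflecting that one degree-$2$ vertex on such a path is ``free'' because a path traversing the whole bare path separates its edges for free except at the two ends. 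So I would aim to show $\spathC(T,E) \le \max\{\lceil (2h_1 + h_2)/3\rceil, \lceil (h_1+h_2)/2 \rceil\}$ by an explicit construction: choose a set of paths that together traverse every bare path, pair up leaves within bunches cleverly so that a single path can serve two bunches at its two ends, and use the slack measured by $h^\ast_2$ and $|\mathcal{I}|$ to absorb the interior degree-$2$ vertices.

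The main obstacle, I expect, is the interface between the two bounds in the $\max$ — i.e.\ showing the construction always meets whichever of the two expressions is larger, across all the different shapes of trees (many small bunches vs.\ few large bunches vs.\ long bare paths), and in particular getting the ceilings to line up exactly rather than being off by one. This is presumably exactly where $\Tannoying$ breaks: there $h_1 = 4$, $h_2 = 1$, so the formula gives $\max\{\lceil 9/3\rceil, \lceil 5/2 \rceil\} = \max\{3,3\} = 3$, but the three root-incident and leaf-incident edges cannot actually be separated-and-covered with three paths because of the rigid symmetric branching, forcing $4$. I would isolate the configurations where the generic construction would give one fewer path than the formula, show $\Tannoying$ is the unique tree where that deficit cannot be repaired, and for every other tree exhibit the extra maneuver (a slightly longer path, or re-routing through an available degree-$2$ vertex or a third leaf) that recovers optimality. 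Verifying $\spathC(\Tannoying,E) = 4$ itself is a finite check: exhibit four paths, and rule out three by a short case analysis on how three paths can meet the central vertex and the two middle vertices.
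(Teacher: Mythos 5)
Your overall architecture (a counting lower bound, a constructive/inductive upper bound, and a finite check isolating $\Tannoying$, whose formula value you correctly compute as $3$ against the true value $4$) matches the paper's. But there is a genuine error in your treatment of degree-$2$ vertices, and it would sink the upper-bound construction as described. You claim that on an interior bare path one degree-$2$ vertex is ``free'' because ``a path traversing the whole bare path separates its edges for free except at the two ends,'' and you plan to use the slack measured by $h^\ast_2$ and $|\mathcal{I}|$. This is false for \emph{edge} separation: a path traversing an entire bare path contains every edge of that bare path and therefore separates none of them from each other. In fact every degree-$2$ vertex $v$ forces a path \emph{endpoint} at $v$, since any path passing through $v$ without ending there contains both edges incident to $v$; this is exactly why the theorem's formula involves $h_2$ and not $h^\ast_2$ (the discount $h^\ast_2$ belongs to the vertex-separation theorem, where interior vertices of a bare path are the objects being separated). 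This endpoint observation is also the correct, and much simpler, derivation of the $\lceil(h_1+h_2)/2\rceil$ lower bound: every leaf must be a path endpoint (to be covered) and every degree-$2$ vertex must be a path endpoint (to separate its two incident edges), and each path supplies only two endpoints, so $h_1+h_2\le 2|\mathcal{F}|$. Your proposed justification of that term via pendant edges at a common vertex does not give this.

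Beyond that error, the two hardest components are left as sketches. The $\lceil(2h_1+h_2)/3\rceil$ lower bound is nontrivial (the paper imports it from Falgas-Ravry et al.); your amortised charging idea is plausible but not carried out. On the upper bound, the paper's engine is a \emph{reduction pair}: simultaneously delete a useful leaf and a degree-$2$ vertex, dropping $h_1$ and $h_2$ each by one at the cost of one added path, then induct on $h_2$ (with a separate induction on $h_2-h_1$ when $h_2>h_1$, and the ABC/planar constructions as base cases). The bulk of the work — three long structural lemmas — goes into showing that when $h_2\in\{1,2\}$ one can always choose a reduction that does not land on $\Tannoying$, or else compute $\spathC$ directly for the handful of small exceptional trees. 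You correctly anticipate that this ``repair the deficit everywhere except $\Tannoying$'' step is where the difficulty lies, but the case analysis is the proof, and it is absent.
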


In Section~\ref{sec:Tvertices}, we obtain lower and upper bounds on $\spathC(T,V)$ in terms of $h_2^\ast(T)$.
Let $K_{1,3}$ be the tree corresponding to a \emph{star with three leaves}.

\begin{theorem}\label{theorem:edgevertexsep-trees}
Let $T$ be a tree.
Then, 

   \[ \spathC(T,V)  \geq 
  \max\left\{ \left\lceil\frac{2h_1(T) + h_2^\ast(T)}{3}\right\rceil , \left\lceil\frac{h_1(T) + h_2^\ast(T)}{2}\right\rceil \right\}.\]

  Additionally, if all the bunches of $T$ have size at least three and $T \neq K_{1,3}$, then,

  \[  \spathC(T,V) 
        \leq 
               \left\lceil \frac{2h_1(T)}{3}\right\rceil + \left\lceil \frac{h_2^\ast(T)+1}{2}\right\rceil.\]
       
\end{theorem}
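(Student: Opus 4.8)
The plan is to prove the lower bound by a counting argument on the extremes of the paths, closely paralleling the proof of \Cref{theorem:edgesepcover-trees} with $h_2$ replaced by $h_2^\ast$, and to prove the upper bound by an explicit two-phase construction.

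For the lower bound, fix a family $\mathcal{F}$ separating and covering $V(T)$ and, for each vertex $v$, let $t(v)$ be the number of paths of $\mathcal{F}$ having $v$ as an extreme, so $\sum_v t(v)\le 2|\mathcal{F}|$. Every path through a leaf $\ell$ must end at $\ell$, and $\mathcal{F}$ covers $\ell$, so $t(\ell)\ge 1$; this accounts for $h_1$ extremes. For the degree-$2$ vertices I would work bare path by bare path: on a bare path $Q=w_0w_1\cdots w_l$ the vertices of each path of $\mathcal{F}$ form an interval of $w_0,\dots,w_l$, and whenever that interval ends at an internal vertex $w_i$ the path genuinely terminates at $w_i$; separating consecutive vertices of $Q$ forces an interval endpoint ``between'' each consecutive pair, and all such endpoints except possibly the ones incident to $w_0$ and $w_l$ must sit at internal (degree-$2$) vertices of $Q$. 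A somewhat delicate accounting of these forced endpoints — in which the endpoint incident to a leaf end of $Q$ is charged to that leaf, the endpoint incident to a branch-vertex end of $Q$ is free, the only genuinely ``free'' internal transition occurs on bare paths in $\mathcal{I}$, and single-vertex paths are dealt with separately — yields $\sum_v t(v)\ge h_1+h_2^\ast$, hence $|\mathcal{F}|\ge\lceil(h_1+h_2^\ast)/2\rceil$. The bound $|\mathcal{F}|\ge\lceil(2h_1+h_2^\ast)/3\rceil$ is obtained by strengthening the leaf part of the count: a leaf $\ell$ with neighbour $u$ needs both a path ending at $\ell$ and a path through $u$ avoiding $\ell$, and distributing these obligations together with the bare-path terminations over the paths so that no path absorbs more than three of them gives $3|\mathcal{F}|\ge 2h_1+h_2^\ast$.

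For the upper bound I would build $\mathcal{F}$ in two phases. Phase one uses $k_1:=\lceil 2h_1/3\rceil$ paths, each with both extremes at leaves. Since $h_1/2\le\lceil 2h_1/3\rceil\le h_1$ and $2h_1-2\lceil 2h_1/3\rceil\le\lceil 2h_1/3\rceil$, one can prescribe to the $h_1$ leaves pairwise distinct codes of size one or two; the hypothesis that every bunch has size at least three is precisely what allows these codes to be realised by routing leaf-to-leaf paths through and between bunches, and one arranges in addition that each such path traverses one degree-$2$ vertex of every interior bare path it meets and that every branch vertex lies on at least one of them (this is where $K_{1,3}$, a single bunch too small for the routing to close up, must be excluded). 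Phase two adds $k_2:=\lceil(h_2^\ast+1)/2\rceil$ further paths whose $2k_2$ extremes are placed at degree-$2$ vertices, one beside each bare-path transition not yet resolved; because phase one has already dealt with one degree-$2$ vertex of each bare path in $\mathcal{I}$, only $h_2^\ast$ transitions remain, the ``$+1$'' absorbing a parity correction and the separation of the degree-$2$ vertices next to branch vertices. Finally one checks that all branch vertices get distinct nonempty codes — true since each is traversed by several phase-one paths and separated from its neighbours by the transition-resolving phase-two paths — giving $|\mathcal{F}|\le k_1+k_2=\lceil 2h_1/3\rceil+\lceil(h_2^\ast+1)/2\rceil$.

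The main obstacle on the lower-bound side is the exact bookkeeping that produces the constant $h_2^\ast$: one must show that exactly one degree-$2$ vertex per bare path in $\mathcal{I}$ can be ``saved'' and no more, handle single-vertex paths, and carry this through the three-to-one charging needed for the $2h_1+h_2^\ast$ bound. On the upper-bound side the crux is the phase-one routing: producing $\lceil 2h_1/3\rceil$ leaf-to-leaf paths that simultaneously realise the prescribed singleton/doubleton codes on the leaves, traverse a suitable degree-$2$ vertex of every interior bare path, and cover every branch vertex — the bunch-size hypothesis is exactly what makes this feasible — and then verifying that adding the phase-two paths destroys none of the separations already achieved.
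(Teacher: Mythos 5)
Your overall architecture matches the paper's: the first lower bound via counting forced path-extremes along bare paths (this is the paper's Lemma~\ref{lem:cotainfsepararV}, including the need to dispose of zero-length paths first), and the upper bound via a leaf phase of size $\lceil 2h_1/3\rceil$ (the Bunch construction~\ref{caso-planar-2} applied after contracting bare paths) followed by $\lceil (h_2^\ast+1)/2\rceil$ extra paths whose ends sit at degree-$2$ vertices, saving one vertex per bare path in $\mathcal{I}$. On the upper bound your sketch is vaguer than the paper in one place that genuinely needs care: two phase-two paths that both terminate inside the same bare path can overlap there and then fail to separate their own endpoints; the paper handles this with an explicit ``refinement'' step that reroutes overlapping pairs in opposite directions. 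That is a fixable omission rather than a wrong idea.

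The genuine gap is in your derivation of $|\mathcal{F}|\geq \lceil(2h_1+h_2^\ast)/3\rceil$. You define $2h_1+h_2^\ast$ obligations (for each leaf $\ell$ with neighbour $u$: ``a path ends at $\ell$'' and ``a path passes through $u$ avoiding $\ell$'', plus the bare-path terminations) and claim these can be distributed so that no path absorbs more than three. That charging fails: in the star $K_{1,n}$ a single path between two leaves $a$ and $b$ fulfils $n$ of these obligations (it ends at $a$ and at $b$, and it passes through the centre while avoiding every other leaf), so your count only yields $|\mathcal{F}|=\Omega(1)$ there, not $\lceil 2n/3\rceil$. The ``neighbour-avoiding'' obligation is simply too cheap. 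The ingredient you are missing is the one the paper (following \cite{FKKLN2014}) uses: every leaf must be an \emph{endpoint} of some path, and if a single path has both of its endpoints at leaves $u$ and $v$, that path does not separate $u$ from $v$, so a second path must also end at $u$ or at $v$. Hence among any set of leaf-endpoints, three leaves require at least four path-ends (realised optimally by a seagull), and after pairing off the $h_2^\ast$ forced degree-$2$ endpoints one gets $|\mathcal{F}|\geq h_2^\ast+2\ell/3$ where $\ell=h_1-h_2^\ast$, i.e.\ $(2h_1+h_2^\ast)/3$. Without this separation-between-co-endpoints argument the $2/3$ ratio does not follow.
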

The restrictions in the second part of \Cref{theorem:edgevertexsep-trees} cannot be avoided. In \Cref{corollary:c13star}, we describe a family of trees which includes $K_{1,3}$, all the trees in the family (apart from $K_{1,3}$) have only bunches of size $2$, and no tree in the family satisfies the upper bound of \Cref{theorem:edgevertexsep-trees}.

Finally, in Section~\ref{sec:randomgraphs}, we investigate the values of $\spath(G,V)$ and $\spathC(G, V)$ when $G = G(n,p)$ is a random binomial graph.
These values satisfy a certain ``threshold'' behavior which is presented in the following result.

\begin{theorem} \label{theorem:random}
    Let $G = G(n,p)$ be a binomial random graph.
    \begin{enumerate}
        \item \label{item:randomupper} If $p \geq (2\ln n + \omega(\ln \ln n))/n$, then w.h.p. $\spathC(G,V) \leq \lceil \log_2 n \rceil + 1$.
        \item \label{item:randomlower} If $p \leq (\ln n - \omega(\ln \ln n))/n$, then w.h.p. $\spath(G,V) = \omega(\log n)$.
    \end{enumerate}
\end{theorem}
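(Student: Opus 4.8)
The plan is to handle the two parts separately: part (i) via an explicit construction of a separating-covering system of the stated size, and part (ii) by exhibiting enough isolated vertices, each of which forces a dedicated one-vertex path. For part (i), set $k=\lceil\log_2 n\rceil$, so $2^{k-1}<n\le 2^k$, and fix $n$ distinct codewords $c_1,\dots,c_n\in\{0,1\}^k$ whose columns are as balanced as possible, that is, $N_j:=|\{i:(c_i)_j=1\}|\in\{\lfloor n/2\rfloor,\lceil n/2\rceil\}$ for every $j\in\{1,\dots,k\}$; such a choice exists because one may take $\lfloor n/2\rfloor$ complementary pairs $\{w,\overline w\}$ — there are $2^{k-1}\ge\lfloor n/2\rfloor$ of them — together with one extra codeword when $n$ is odd, and a complementary pair puts exactly one $1$ and one $0$ into every column. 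Now pick a uniformly random bijection $\sigma\colon\{1,\dots,n\}\to V(G)$ independent of $G$, assign to the vertex $\sigma(i)$ the codeword $c_i$, and let $S_j=\{\sigma(i):(c_i)_j=1\}$, which is then a uniformly random $N_j$-subset of $V(G)$. The proposed system is $\mathcal F=\{P_1,\dots,P_k,P_{k+1}\}$, where each $P_j$ with $j\le k$ is a Hamilton path of the induced subgraph $G[S_j]$ and $P_{k+1}$ is a Hamilton path of $G$: provided these paths exist, any two distinct vertices $u\neq v$ are separated by $P_j$ for any coordinate $j$ in which $c_{\sigma^{-1}(u)}$ and $c_{\sigma^{-1}(v)}$ differ (the Hamilton path $P_j$ of $G[S_j]$ contains exactly the vertices of $S_j$), and every vertex lies on $P_{k+1}$; thus $\mathcal F$ separates and covers $V(G)$, giving $\spathC(G,V)\le k+1=\lceil\log_2 n\rceil+1$.

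It remains to verify that, w.h.p., the random choices of $G$ and $\sigma$ make all $k+1$ Hamilton paths exist. Since $p\ge(\ln n+\ln\ln n+\omega(1))/n$, the graph $G$ is Hamiltonian w.h.p. For $j\le k$, independence of $G$ and $\sigma$ means $G[S_j]$ is distributed exactly as $G(N_j,p)$ with $N_j=(1+o(1))n/2$, and the hypothesis $p\ge(2\ln n+\omega(\ln\ln n))/n$ translates into $N_jp\ge\ln N_j+\ln\ln N_j+h(n)$ for some $h(n)\to\infty$ (the extra $\omega(\ln\ln n)$ in the hypothesis is exactly what makes $h(n)\to\infty$ after passing to $N_j\approx n/2$). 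Here I would invoke a quantitative version of the Hamiltonicity threshold for binomial random graphs — the dominant obstruction to a Hamilton cycle being a vertex of degree at most $1$ — to bound the probability that $G(N_j,p)$ is not Hamiltonian by $(1+o(1))e^{-h(n)}=(\ln n)^{-\omega(1)}=o(1/\log n)$. A union bound over the non-Hamiltonicity of $G$ and of $G[S_1],\dots,G[S_k]$, i.e.\ over $k+1=O(\log n)$ events each of probability $o(1/\log n)$, shows that with probability $1-o(1)$ over $(G,\sigma)$ all of these graphs are Hamiltonian; hence w.h.p.\ over $G$ alone there exists a suitable $\sigma$, and part (i) follows.

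For part (ii), let $X$ be the number of isolated vertices of $G=G(n,p)$. If $v$ is isolated then the unique path of $G$ through $v$ is the trivial path $(v)$, so in any $V$-separating system at most one isolated vertex can lie on no path; the remaining $X-1$ isolated vertices each contribute a distinct one-vertex path, whence $\spath(G,V)\ge X-1$, and it suffices to prove $X=\omega(\log n)$ w.h.p. Writing $p\le(\ln n-g(n))/n$ with $g(n)=\omega(\ln\ln n)$ (one may assume $g(n)=o(\ln n)$, the complementary case being easier), a short computation gives
\[
\expectedvalue[X]=n(1-p)^{n-1}\ge n\,e^{-np/(1-p)}\ge(1+o(1))\,e^{g(n)},
\]
and $e^{g(n)}=(\ln n)^{\omega(1)}=\omega(\log n)$. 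Since $\prob[u,v\text{ both isolated}]=(1-p)^{2n-3}=(1+o(1))\prob[u\text{ isolated}]\,\prob[v\text{ isolated}]$ for $u\ne v$, the covariances sum to $o(\expectedvalue[X]^2)$, so $\variance[X]\le\expectedvalue[X]+o(\expectedvalue[X]^2)$ and Chebyshev's inequality yields $X\ge\tfrac12\expectedvalue[X]$ w.h.p.; together with $\spath(G,V)\ge X-1$ this gives $\spath(G,V)=\omega(\log n)$ w.h.p.

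I expect the main obstacle to be the quantitative Hamiltonicity estimate in part (i): because the coordinate sets $S_j$ have size only about $n/2$, each $G[S_j]$ lies essentially right at its own Hamiltonicity threshold, so the crude statement ``$G(N_j,p)$ is Hamiltonian w.h.p.'' does not suffice — one needs the failure probability to beat $1/\log n$ so as to survive a union bound over all $\Theta(\log n)$ coordinates, and this is exactly what the extra $\omega(\ln\ln n)$ term in the hypothesis buys. (The matching $\ln n$ in part (ii) is the isolated-vertex/connectivity threshold, which is why the two statements together exhibit the advertised threshold behaviour.)
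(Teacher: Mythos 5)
Your proposal is correct and follows essentially the same route as the paper: a separating set family of $\lceil\log_2 n\rceil+1$ parts each of size $\approx n/2$, realized as Hamilton paths of the induced subgraphs via the sharp quantitative Hamiltonicity estimate you ask for (the paper cites Alon--Krivelevich for precisely the bound $\prob[\text{not Hamiltonian}]=(1+o(1))\prob[\delta<2]$) followed by a union bound over $O(\log n)$ events, and isolated vertices plus a second-moment/Chebyshev argument for the lower bound. The only cosmetic differences are that you encode vertices by balanced binary codewords and add a Hamilton path of all of $G$ to guarantee covering, whereas the paper splits $V(G)$ into two halves and mixes them through a separating system of $[k]$; your random bijection $\sigma$ is unnecessary (any fixed subset $S$ already satisfies $G[S]\sim G(|S|,p)$) but harmless.
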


\section{Constructions}\label{sec:construct}
In this section, we present the constructions that will be used to prove the results in sections \ref{sec:Tedges} and  \ref{sec:Tvertices}, and we establish their properties.

\begin{definition}[ABC construction]\label{cyclic-ABC}
Let $T$ be a tree such that $|V(T)|\ge 3$ and $h_1(T)= 3k$. We use a depth-first search (DFS) to assign names to the leaves of T as follows: Select any leaf as the starting point of the DFS and label it as $a_1$. Then, proceed with the DFS and name the leaves in order of appearance as $a_2, a_3 \dots, a_k, b_1, \dots, b_k, c_1, \dots, c_k$. We define the family of paths $\mathcal{F}=\{P_1,...,P_k,Q_1,...,Q_k\}$, where $P_i$ is the only path between $a_i$ and $b_i$, and $Q_i$ is the only path between $a_i$ and $c_i$.
\end{definition}

\begin{remark}\label{rem:consecutiveABC}
Since the DFS visits each edge twice, we can observe that for any edge $e \in E(T)$, the leaves of the two connected components of $T \setminus  e$ are those that are named between the two visits, as well as the leaves that are named before and after the two visits. Consequently, the leaves of these two connected components will be consecutively named in the order provided by the DFS traversal. 
\end{remark}

\begin{lemma}\label{lema 4.3}
Let $T$ be a tree with $h_2(T)=0$ and $h_1(T)=3k$.
Then the family $\mathcal{F}$ of paths obtained by the ABC construction is an edge-separating-covering system and $|\mathcal{F}|= 2k$.
\end{lemma}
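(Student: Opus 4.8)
The equality $|\mathcal{F}|=2k$ is immediate, since the $2k$ paths $P_1,\dots,P_k,Q_1,\dots,Q_k$ have pairwise distinct pairs of extremes, so the content is to prove that $\mathcal{F}$ covers and separates $E(T)$. The plan is to argue entirely in terms of the cyclic order $\sigma=(a_1,\dots,a_k,b_1,\dots,b_k,c_1,\dots,c_k)$ on the $3k$ leaves, identifying this set with $\mathbb{Z}/3k$. For $i\in\{1,\dots,k\}$ let $\Delta_i$ be the triple of positions occupied by $a_i,b_i,c_i$; the $\Delta_i$ partition $\mathbb{Z}/3k$, and any two consecutive positions lie in triples whose indices differ by $1$ modulo $k$. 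By \Cref{rem:consecutiveABC}, for each edge $e$ the leaf set of either component of $T\setminus e$ is an arc (a run of consecutive positions) of $\sigma$; moreover a path of $\mathcal{F}$ with extremes $x,y$ contains $e$ if and only if $x$ and $y$ lie in different components of $T\setminus e$. The key combinatorial fact I will establish first is: \emph{if $\emptyset\neq U\subsetneq\mathbb{Z}/3k$ is a union of triples $\Delta_i$, then the number of maximal arcs of $U$ is a multiple of $3$, in particular at least $3$}. Indeed, a position $j$ with $j\in U$ and $j+1\notin U$ is exactly one whose triple-index $i$ lies in the index set $J$ of $U$ while $i+1\notin J$, and each index is attained by exactly three positions; so the number of such $j$ (which is the number of maximal arcs of $U$) equals three times the number of maximal arcs of $J\subsetneq\mathbb{Z}/k$, which is at least $1$ since $J$ is nonempty and proper.

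For covering, fix $e\in E(T)$ and suppose no path of $\mathcal{F}$ contains $e$. Then for every $i$ the leaves $a_i,b_i$ lie in the same component of $T\setminus e$ (otherwise $P_i$ contains $e$) and likewise $a_i,c_i$; hence each triple $\Delta_i$ is entirely contained in one component, so each of the two leaf sets $L_1,L_2$ of the components of $T\setminus e$ is a union of triples. But $L_1$ and $L_2$ are complementary arcs of $\sigma$, and both are nonempty: any component $C$ of $T\setminus e$ contains a leaf of $T$, since a leaf-free $C$ would have degree sum both $\ge 2|C|$ and equal to $2(|C|-1)+1$. This contradicts the combinatorial fact, since a nonempty proper union of triples cannot be a single arc. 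Hence $\mathcal{F}$ covers $E(T)$.

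For separation, let $e\neq f$ be edges of $T$. Removing $e$ and $f$ splits $T$ into three subtrees; contracting each to a point yields a tree on three vertices, i.e.\ a path, so we may label the components $X,Y,Z$ so that $e$ joins $X$ to $Y$ and $f$ joins $Y$ to $Z$. A path of $\mathcal{F}$ with extremes $x,y$ contains exactly one of $e,f$ precisely when exactly one of $x,y$ lies in $L(Y)$ (if $x\in L(Y)$ and $y\in L(X)$ the path uses $e$ but not $f$, symmetrically for $Z$, and every other pattern uses both edges or neither). So if no path of $\mathcal{F}$ separates $e$ from $f$, then for each $i$ the triple $\Delta_i$ is either contained in $L(Y)$ or disjoint from it, i.e.\ $L(Y)$ is a union of triples. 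Now $L(Y)\neq\emptyset$: otherwise every vertex of $Y$ is a non-leaf of $T$ and hence has degree $\ge 3$ because $h_2(T)=0$, yet $Y$ is a subtree with exactly two edges leaving it, so $\sum_{w\in Y}\deg_T(w)=2(|Y|-1)+2=2|Y|<3|Y|$, a contradiction. Also $L(Y)\neq\mathbb{Z}/3k$, since $X$ contains a leaf of $T$ as above. Finally $L(Y)$ is the complement in $\mathbb{Z}/3k$ of $L(X)\cup L(Z)$, which is a union of two disjoint arcs and so has at most two maximal arcs; on a circle a set and its complement have equally many maximal arcs, so $L(Y)$ has at most two, contradicting the combinatorial fact. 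Hence some path of $\mathcal{F}$ separates $e$ and $f$, so $\mathcal{F}$ separates $E(T)$.

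The crux is the combinatorial fact about unions of triples, together with the observation that the middle leaf set $L(Y)$ need not be an arc of $\sigma$, so one cannot simply quote \Cref{rem:consecutiveABC} for it; packaging the obstruction as a lower bound on the number of maximal arcs is what lets the same fact drive both halves of the argument. The only properly tree‑theoretic point, and the sole use of the hypothesis $h_2(T)=0$, is the degree count giving $L(Y)\neq\emptyset$; that is where I would take most care.
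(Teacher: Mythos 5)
Your proof is correct, and while it rests on the same two structural inputs as the paper's --- the arc (interval) structure of leaf sets coming from \Cref{rem:consecutiveABC}, and the observation that if no path of $\mathcal{F}$ does the job then the relevant leaf sets are closed under the triples $\{a_i,b_i,c_i\}$ --- the way you extract the contradiction is genuinely different. The paper treats covering and separation in one stroke: for each of the three components $T_j$ of $T\setminus\{e,f\}$ it shows some path has exactly one endpoint in $T_j$, and the contradiction is that a leaf set which is a union of at most two arcs and is closed under the triples must contain one of the three arcs cut out by some $a_i,b_i,c_i$, hence a representative of every triple, hence every leaf. You instead split covering (via the two components of $T\setminus e$) from separation (via the middle component $Y$ only, using the clean criterion that a path separates $e$ from $f$ iff exactly one extreme lies in $L(Y)$), and you replace the ``one of three thirds'' step by an exact count: a nonempty proper union of triples has a number of maximal arcs divisible by $3$, hence at least $3$, while the sets in question have at most $1$ or $2$ maximal arcs. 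Your packaging makes the role of the number $3$ and of the specific cyclic labelling completely transparent and would generalize readily (e.g.\ to an analogous construction with $q$-tuples); the paper's version is shorter because it never needs the covering case separately. All the tree-theoretic side conditions you invoke (each component of $T\setminus e$ contains a leaf of $T$; the middle component $Y$ contains a leaf because $h_2(T)=0$ forces degree at least $3$ there) are verified correctly, and the latter is indeed the only place the hypothesis $h_2(T)=0$ enters, exactly as in the paper.
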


\begin{proof}
Let $\Fam$ be the family of paths obtained from the ABC construction in Definition \ref{cyclic-ABC} applied to $T$.
The size of $\mathcal{F}$ is $2k$, and we will show that $\Fam$ is an edge-separating-covering system.

Suppose $a_1, \dots, a_k, b_1, \dots, b_k, c_1, \dots, c_k$ is the enumeration of the leaves of $T$ given by the ABC construction.
Let $e,f \in E(T)$ be two distinct edges in $T$, then $T\setminus\{e,f\}$ is a forest with three connected components: $T_1$ that contains one endpoint of $e$, $T_2$ that contains one endpoint of $e$ and one endpoint of $f$, and $T_3$ that contains one endpoint of $f$. 

Since $T$ does not have vertices of degree two, all leaves of $T_1, T_2$ and $T_3$ are also leaves of $T$. 
Using Remark \ref{rem:consecutiveABC} two times, we can conclude that the set of leaves of $T_1$ and $T_3$ are consecutive in the cyclic order of the leaves of $T$.
We express this by saying that $T_1$ and $T_3$ have an \emph{interval of leaves} (which is not necessarily true for $T_2$, but we can assert that the leaves of $T_2$ are the union of at most two intervals of leaves).
We will prove that there is at least one path in $\mathcal{F}$ that starts in $T_j$ and ends outside $T_j$, for each $j\in\{1,2,3\}$, proving in this way that $\mathcal{F}$ both separates and covers $e$ and $f$.

In order to prove this, let us fix $j\in\{1,2,3\}$ and suppose that all paths with an endpoint in $T_j$ have both endpoints inside $T_j$.
This implies that each time one of $a_i$, $b_i$, or $c_i$ is a leaf in $T_j$, then the three of $a_i$, $b_i$ and $c_i$ are leaves in $T_j$.
If $a_i\in T_j$, we distinguish three cases: either $a_i,a_{i+1},\dots,a_k,b_1,\dots,b_i$ are all leaves of $T_j$, or $b_i,b_{i+1},\dots,b_k,c_1,\dots,c_i$ are all contained in $T_j$, or $c_i,c_{i+1},\dots,c_k,a_1,\dots,a_i$ are all in $T_j$.
In any case, for every $l\in\{1,\dots,k\}$, either $a_l$, $b_l$ or $c_l$ is in $T_j$, implying that $T_j=T$, which is a contradiction.

This proves that the family given by the ABC construction separates and covers the edges of $T$, as required.
\end{proof}

Next, we introduce a construction fully based on a planar drawing of a tree $T$
(a similar construction appears in \cite[Section 2.3]{BCMP2016}).

\begin{definition}[Planar construction]\label{Construction-planar}
    Given a tree $T$, take any planar drawing of $T$ that presents the leaves in a cyclic order.
    Starting from any leaf, enumerate the leaves of $T$ as $v_1, \dots, v_{h_1}$, clockwise.
    Then, for each $1\leq i \leq h_{1}-1$, we define the path $P_i$ that starts in $v_i$ and ends in $v_{i+1}$, and also the path $P_{h_1}$ that starts in $v_{h_1}$ and ends in $v_1$.
    This defines the \emph{Planar construction} of a family of $h_1$ paths in $T$.
\end{definition}

The Planar construction can be defined in an equivalent way as follows.
Starting from the planar drawing of $T$, add a new vertex $\infty$ located far from $T$ and connect it to each leaf of $T$ to form a new graph $G$.
This gives a planar drawing of $G$.
Since $G$ is $2$-connected, each face is bounded by a cycle;
so for any face $F$ of $G$, we can consider the cycle $C$ containing all vertices from $F$.
Such a cycle contains exactly two leaves of $T$, and (by removing $\infty$) it corresponds naturally to a unique path in the system $\Fam$ from Definition~\ref{Construction-planar}.

\begin{remark}\label{grado-cara-planar}
We can see that, in $G$, the number of faces that contains a vertex $v$ is equal to the degree of $v$, and that each edge belongs to exactly two faces.
Consequently, any vertex in $T$ is contained in as many paths in $\Fam$ as its degree, except for the leaves, which belong exactly to two different paths.
Similarly, any edge is contained in exactly two different paths of $\Fam$.
\end{remark}

\begin{lemma} \label{lema-planar-general}
    Let $T$ be a tree with $h_2(T)=0$ and $h_1(T)\ge 3$.
    The family $\Fam$ of paths obtained by the Planar construction has size $h_1(T)$ and separates and covers $E(T)$, and $V(T) \cup E^*(T)$. 
\end{lemma}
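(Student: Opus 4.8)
The size claim is immediate from \Cref{Construction-planar}, and coverage follows from \Cref{grado-cara-planar}: every edge of $T$ lies in exactly two paths of $\Fam$, every leaf in exactly two, and every non-leaf vertex $w$ in exactly $d_T(w)$ paths, which is at least $3$ because $h_2(T)=0$; so in particular every element of $E(T)$ and of $V(T)\cup E^*(T)$ lies on some path. To handle separation I would fix the cyclic enumeration $v_1,\dots,v_{h_1}$ of the leaves produced by the construction, call the slot between $v_i$ and $v_{i+1}$ (indices mod $h_1$) the \emph{$i$-th gap}, and identify the path $P_i$ with the $i$-th gap, so that $\Fam(x)\subseteq\{1,\dots,h_1\}$ for every element $x$. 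The task is then to show $\Fam(s)\ne\Fam(t)$ whenever $s\ne t$ both lie in $E(T)$, and whenever $s\ne t$ both lie in $V(T)\cup E^*(T)$.

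Everything rests on a description of $\Fam(x)$. For an edge $e$, the $v_i$-$v_{i+1}$-path uses $e$ exactly when $v_i,v_{i+1}$ lie in different components of $T-e$; by \Cref{grado-cara-planar} this happens for exactly two values of $i$, and since each component of $T-e$ contains a leaf of $T$, the two leaf-sets are complementary arcs of the cyclic order and $\Fam(e)$ is precisely the pair of gaps that bound them. Hence $\Fam(e)$ consists of two \emph{consecutive} gaps if and only if one of those arcs is a single leaf, i.e.\ if and only if $e$ is incident with a leaf; moreover $\Fam(e)$ determines the arc partition, hence the partition of the leaves induced by $T-e$. The identical argument for a non-leaf vertex $w$ shows that the components of $T-w$ have leaf-sets forming $d_T(w)$ pairwise disjoint arcs, that $\Fam(w)$ is exactly the set of $d_T(w)\ge 3$ gaps separating consecutive arcs, and that $\Fam(w)$ determines this partition. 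Finally a leaf lies only on the two paths of which it is an endpoint, so $\Fam(v_i)=\{i-1,i\}$.

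Separation now splits into cases according to the types of $s$ and $t$. The substantive cases are the two in which $\Fam(s)=\Fam(t)$ would force $T-s$ and $T-t$ to induce the same partition of the leaf set. If $s=e,t=e'$ are edges, this cannot happen when $h_2(T)=0$: if $e\ne e'$ induced the same leaf-partition, then the vertex $w_1$ nearest to $e$ on the path of $T$ joining $e$ and $e'$ would have degree $\ge 3$, hence an incident edge leaving that path into a subtree containing a leaf $z$, and then one component of $T-e'$ would contain $z$ together with a leaf of the other component of $T-e$, contradicting that the two leaf-partitions agree. If $s,t$ are non-leaf vertices, fix an edge $f$ on the $s$-$t$-path: the leaves on the $s$-side of $f$ form a single class of the partition induced by $T-t$, but (as $s$ has degree $\ge 3$ it has at least two edges pointing away from $f$, each leading to a subtree with a leaf) they meet at least two classes of the partition induced by $T-s$, a contradiction. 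The remaining cases are immediate: a leaf against a non-leaf vertex, and an interior edge against a non-leaf vertex, are separated since $|\Fam(\cdot)|=2$ in the first case and $|\Fam(\cdot)|\ge 3$ in the second; two distinct leaves $v_i,v_j$ are separated because $\{i-1,i\}\ne\{j-1,j\}$ (valid as $h_1\ge 3$); and a leaf against an interior edge is separated because $\Fam$ of the leaf is a pair of consecutive gaps whereas $\Fam$ of an interior edge is not. These exhaust all pairs from $E(T)$ and all pairs from $V(T)\cup E^*(T)$.

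The crux — and the only place $h_2(T)=0$ is genuinely used — is the pair of ``equal-partition'' arguments: that distinct edges of $T$ induce distinct leaf-partitions and that deleting a non-leaf vertex yields a leaf-partition which pins that vertex down. Both boil down to the fact that, when $h_2(T)=0$, every non-leaf vertex has degree at least $3$, and both genuinely fail otherwise (all edges of a bare path induce the same leaf bipartition). Spelling these two arguments out, together with the elementary point that every subtree arising in these deletions contains a leaf of $T$, is the bulk of the work; the rest is bookkeeping with the cyclic leaf order and \Cref{grado-cara-planar}.
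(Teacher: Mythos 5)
Your proof is correct, but it takes a genuinely different route from the paper's. The paper does not prove edge-separation at all: it cites \cite[Proof of Theorem 5]{BCMP2016} for that, and then bootstraps the remaining separation claims from it --- non-adjacent vertices $u,w$ are separated by whichever path separates the two end-edges of the $u$-$w$-path, a leaf is separated from an interior edge via the leaf's pendant edge, and a non-leaf vertex is separated from any edge by pure counting ($d_T(v)\ge 3$ paths through $v$ versus exactly $2$ through an edge). You instead build a self-contained ``fingerprint'' argument: $\Fam(x)$, read as a set of gaps in the cyclic leaf order, encodes the partition of the leaves induced by deleting $x$, and you show distinct elements induce distinct partitions (or distinct fingerprint sizes/shapes). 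This buys independence from the external citation and a uniform mechanism for all cases, at the cost of more bookkeeping; the paper's version is shorter but only modulo the imported result. Your two ``equal-partition'' contradictions are sound, and your derivation that a partition class met by exactly as many boundary gaps as it has classes must consist of arcs is a nice way to get contiguity directly from \Cref{grado-cara-planar}. Two small points to tighten when writing it up: in the vertex--vertex case the leaves on the $s$-side of $f$ are \emph{contained in} (not necessarily equal to) a single class of the $T-t$ partition, which is all the contradiction needs; and the equivalence ``an arc of $\Fam(e)$ is a single leaf iff $e$ is incident with a leaf'' needs the short $h_2(T)=0$ argument that a component of $T-e$ containing only one leaf of $T$ is that single vertex (note also that interior edges force $h_1(T)\ge 4$, so the ``consecutive gaps'' test is never applied in the degenerate case $h_1(T)=3$ where all pairs of gaps are consecutive).
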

\begin{proof}

From \Cref{grado-cara-planar}, it is clear that $\Fam$ covers $V\cup E$. The size of $\Fam$ is $h_1(T)$.
The proof that $\Fam$ separates $E$ is given in \cite[Proof of Theorem 5]{BCMP2016}.

Let $u$ and $w$ be two different vertices in $T$.
If they are adjacent, at most one of them is a leaf, say $u$, and then $d_T(w)\geq 3$. Therefore, $w$ belongs to a path of $\Fam$ that does not contain $u$.
If $u$ and $w$ are not adjacent, there is $k \geq 1$ and a unique path $P=uu_1\dotsb u_kw$ from $u$ to $w$.
We know that there is a path $P^*\in\Fam$ that separates $uu_1$ from $u_kw$.
Let us suppose without loss of generality that $P^*$ contains $uu_1$ but does not contain $u_kw$.
Such path separates $u$ from $w$, indeed, $P^*$ contains $u$, but since the unique path from $u$ to $w$ contains also $u_kw$, $P^*$ cannot contain $w$.
To conclude this proof, take $e \in E^*$ and $v\in V$.
We distinguish two cases. \medskip
        
\noindent \emph{Case 1: $v$ is a leaf.} 
Let $e_v$ be the only edge adjacent to $v$.
    This edge is not internal; thus, it is different from $e$.
    Therefore, there exists a path $P^*$ in $\Fam$ that separates $e$ from $e_v$.
    If $P^*$ contains $e$ and not $e_v$, it cannot contain $v$.
    If $P^*$ contains $e_v$ and not $e$, it contains $v$ and not $e$.
    In any case $P^*$ separates $e$ from $v$.
    \medskip
    
\noindent \emph{Case 2: $v$ is not a leaf.}
    Since $v$ is not a leaf, $d_T(v)\geq 3$. This means $v$ is in at least three paths in $\Fam$.
    As $e$ is an edge, $e$ is in only two paths of $\Fam$, meaning there is a path $P^* \in \Fam$ such that $e \notin P^*$ and $v \in P^*$.
    This proves that $\Fam$ separates $E$, and $V\cup E^*$.
\end{proof}

The Planar construction uses one path for each leaf, while the ABC construction is more efficient because it uses only two paths for every three leaves. The heart of the ABC construction is to join three leaves with two paths. Such a construction automatically covers and separates the three leaves. Given three leaves $u, v, w$ of a tree $T$, we define a \emph{$uvw$-seagull} to be the set consisting of the unique $u$-$v$-path in $T$ and the unique $v$-$w$-path in $T$.

\begin{definition}[Bunch construction]
    \label{caso-planar-2}
Let $T$ be a tree different from the bi-clique $K_{1,3}$, such that all its bunches have a size of at least two. Enumerate the bunches of $T$ from $0$ to $r-1$. 
Enumerate the leaves of bunch $i$ from $v^i_1$ to $v^i_{\smash{\ell_i}}$.
Proceed in two steps for this construction.
In the first step, for each $i$, include the $v^i_{\smash{\ell_i - 1}}$-$v^i_{\smash{\ell_i}}$-path, and the $v^i_{\smash{\ell_i}}$-$v^{i+1}_1$-path (where sums are $\mod r$).
In the second step, group all remaining leaves into groups of size three and a group of size smaller than three, if necessary. 
Inside each group of size three, include the two paths of a seagull corresponding to those leaves.
For the group of size less than three, cover each leaf and its unique neighbor with the single edge touching the leaf.
In the case $r=1$, proceed directly with the second step. 
\end{definition}

\begin{lemma}
\label{lema-planar-racimos}    
   Consider a tree $T$ distinct from $K_{1,3}$ that satisfies the conditions $h_2(T) = 0$, and all of its bunches have a size of at least three. Let $\Fam$ be the family of paths obtained through the Bunch construction. Then, $\Fam$ covers and separates both $E(T)$ and $V(T) \cup E^*(T)$. Furthermore, 
$|\Fam| = \left\lceil \frac{2h_1(T)}{3} \right\rceil$.
\end{lemma}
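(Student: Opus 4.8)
First I would fix the enumeration so that $v^0_1,\dots,v^0_{\ell_0},v^1_1,\dots,v^{r-1}_{\ell_{r-1}}$ is the cyclic order of the leaves of $T$ coming from a planar drawing (equivalently a DFS order, in the spirit of \Cref{rem:consecutiveABC}). This cannot be skipped: with a badly chosen cyclic order of the bunches $\Fam$ can fail to separate a pair of adjacent internal vertices, so some canonical choice is forced on us. A planar order buys the \emph{interval property}: for every edge $e$ the leaves on each side of $T\setminus e$ form a contiguous cyclic arc, and for every vertex $x$ the leaf-sets of the $d_T(x)$ components of $T\setminus x$ are $d_T(x)$ consecutive arcs partitioning the cycle. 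The size identity is then a count: since every $\ell_i\ge 3$, Step $1$ produces $2r$ paths and consumes exactly the $3r$ leaves $v^i_1,v^i_{\ell_i-1},v^i_{\ell_i}$, and Step $2$ produces $2\lfloor (h_1-3r)/3\rfloor+\big((h_1-3r)\bmod 3\big)$ paths on the remaining $h_1-3r\ge 0$ leaves; summing and simplifying yields $|\Fam|=\lceil 2h_1/3\rceil$ (the same computation, with Step $1$ empty, handles $r=1$, where $T$ is a star).

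\textbf{Covering.} Every leaf of $T$ is an extreme of some path of $\Fam$ — Step $1$ makes three leaves of each bunch into extremes, and Step $2$ makes every remaining leaf a seagull endpoint or the leaf-end of a single-edge path — so every pendant edge is covered. For an interior edge $e$: because $h_2(T)=0$ and all bunches have size $\ge 3$, each side of $T\setminus e$ is a tree whose leaves are all leaves of $T$, hence contains a whole bunch; so some consecutive pair of bunches $i,i+1$ lies on opposite sides of $e$, and the Step-$1$ linking path joining $v^i_{\ell_i}$ and $v^{i+1}_1$ contains $e$. Therefore $\Fam$ covers $E(T)$, hence $V(T)$ (each vertex lies on a covered incident edge), hence $V(T)\cup E^*(T)$.

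\textbf{Separation.} I would run two devices. \emph{(D1)} If a leaf $\ell$ is an extreme of exactly one path of $\Fam$, the other extreme of that path is never again an ``extreme-of-exactly-one'' leaf: in Step $1$ it is a last-of-bunch leaf, lying on two paths; in Step $2$ it is a seagull centre (two paths) or a support vertex (not a leaf). Hence $\Fam$ separates any two leaves — a failure would force both to be extremes of nothing but the unique path joining them — and, since $\Fam(\ell)=\Fam(c\ell)$ for a leaf $\ell$ with neighbour $c$, $\Fam$ also separates all pairs of pendant edges, and separating a pendant edge $c\ell$ from an element $x$ is the same as separating the leaf $\ell$ from $x$. \emph{(D2)} When the bunches get split — according to the component of $T$ minus some element, or minus one or two interior edges, they fall into — into several nonempty classes, the interval property places these classes on consecutive arcs, so each arc-boundary yields a linking path with its two extremes in the two corresponding components; and since a leaf is an extreme of at most one linking path, a set of arc-boundaries always yields a linking path avoiding any prescribed leaf. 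Combining: two interior edges are separated by a linking path crossing one but not the other, coming from an arc-boundary between the middle component of $T\setminus\{e,f\}$ (which again contains a bunch, using the size-$\ge 3$ hypothesis) and the outer ones; a leaf $\ell$ and an interior edge $f$ are separated because either an $\Fam$-path out of $\ell$ stays on $\ell$'s side of $f$, or one of the (at least two) linking paths through $f$ misses $\ell$; and a non-leaf $v$ against a leaf, another non-leaf, or an interior edge is handled by the within-bunch path $A_m$ when $v=c_m$ is a support vertex (it meets $v$ but no other internal vertex and no interior edge) and otherwise by a linking path between two components of $T\setminus v$ not containing the other element (such a boundary exists since $T\setminus v$ then has $\ge 3$ components each with a bunch, arranged as $\ge 3$ consecutive arcs).

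\textbf{Main obstacle.} The ideas are light; the real work is the bookkeeping of the last block of cases — internal vertices and interior edges against each other and against leaves — where one must point at the precise arc-boundary whose linking path separates and check that it omits the prescribed element, and where (as the failure under a bad ordering shows) the planar order is genuinely being used. Additional care is needed for $r=1$ (stars, where $E^*=\emptyset$) and for Step-$2$ leftover leaves, which must be threaded through each case.
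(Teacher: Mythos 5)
Your proof is correct in outline but takes a genuinely different route from the paper's. The paper does not argue from the interval property directly: it deletes leaves from each bunch until every bunch has size two, obtaining a tree $T'$ with $h_2(T')=0$, applies the Planar construction to $T'$, and invokes \Cref{lema-planar-general} to dispose of every pair of elements lying outside the bunches via the containment $\Fam'(s)\subseteq\Fam(s)$; only the elements inside bunches are then handled by a short ad hoc count on $|\Fam_1(b)|$ (a bunch element lying on two or more Step-1 paths must be the last leaf of its bunch, its pendant edge, or the bunch centre, and in each case the within-bunch path stays inside the bunch). Your argument is more self-contained — it re-derives what \Cref{lema-planar-general} provides by working with the cyclic arcs of leaves directly — at the cost of the case bookkeeping you acknowledge. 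What your route buys, and what I consider a genuine contribution of your write-up, is making explicit that the bunches must be enumerated consistently with a planar cyclic order: \Cref{caso-planar-2} as stated allows an arbitrary enumeration, and your necessity claim is right (e.g.\ for two adjacent degree-$4$ internal vertices $y_1,y_2$ each carrying two size-$3$ bunches, interleaving the bunch order as $1,3,2,4$ makes every linking path contain both $y_1$ and $y_2$, so they are never separated). The paper's proof silently assumes the planar order through its identification of Step 1 with the Planar construction on $T'$.

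One point in your case analysis is misstated: the within-bunch path $A_m$ does \emph{not} separate the centre $c_m$ from the two leaves $v^m_{\ell_m-1},v^m_{\ell_m}$ lying on $A_m$ itself (nor does a seagull separate its centre vertex from the seagull's middle leaf). This is precisely where the hypotheses ``all bunches have size at least three'' and $T\neq K_{1,3}$ enter: a third leaf of the bunch is an extreme of some path that passes through $c_m$ but, being a path whose leaf-extremes are other leaves, omits $v^m_{\ell_m-1}$ and $v^m_{\ell_m}$. Your devices cover this once stated, but as written the sentence ``handled by the within-bunch path $A_m$'' overclaims, and this is the one sub-case where the exclusion of $K_{1,3}$ is actually used, so it should not be folded into generic bookkeeping.
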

\begin{proof}
First, we study the size of $\Fam$. Let us say that $h_1(T) = 3k + i$, with $i \in \{0,1,2\}$. From the construction, we have $|\Fam| = 2k + i = \left\lceil\frac{2h_1(T)}{3}\right\rceil$.

Now, we need to show that $\Fam$ covers and separates $E(T)$ and $V(T) \cup E^\ast(T)$. Let $T'$ be the tree obtained from $T$ by deleting leaves until all bunches have size two. We can do this because we assume that all the bunches of $T$ have a size of at least three. Apply the Planar construction to $T'$ to obtain a family $\Fam'$. Thus, $\Fam'$ covers and separates $E(T')$ and $V(T') \cup E^\ast(T')$.

If an element $s$ (a vertex or an edge) does not belong to a bunch, then $\Fam'(s) \subseteq \Fam(s)$\footnote{Even though the sets are defined over different trees, we consider the obvious mapping between paths in $T'$ and paths in $T$.}. Therefore, all those elements are covered and separated by $\Fam$, since $\Fam'$ covers and separates $V(T') \cup E^\ast(T')$. The Bunch construction covers all elements that belong to a bunch. Therefore, $\Fam$ covers $V(T) \cup E(T)$.

Now, consider two elements $s$ and $b$ in $V(T) \cup E(T)$. By the previous observation, if neither $s$ nor $b$ belongs to a bunch, they are separated by $\Fam$. Assume that $b$ belongs to a bunch and $s$ does not. We use $\Fam_1$ to denote the family obtained at the end of the first step of the Bunch construction. Then, we have $\Fam_1(s) = \Fam'(s)$, and by \Cref{grado-cara-planar}, $|\Fam'(s)| \geq 2$. If $|\Fam_1(b)| \leq 1$, then there is a path in $\Fam_1(s)$ that does not contain $b$, and therefore, $b$ and $s$ are separated by $\Fam_1$. If $|\Fam_1(b)| \geq 2$, then $b$ is either the last leaf of the bunch, its adjacent edge, or its neighbor in the bunch. In all these cases, $\Fam_1(b)$ contains the path from the last leaf of the bunch to the leaf before in the same bunch. That path stays in the bunch and therefore does not contain $s$. Hence, $\Fam_1$ separates $b$ and $s$. To conclude this part of the proof, we notice that if a family separates two elements and we add a path to that family, the new family still separates the two elements. Therefore, $\Fam$ separates $b$ and $s$.

Finally, let $s$ and $b$ be two elements of a bunch. If $s$ is a vertex but not a leaf, $\Fam(s)$ contains the union of $\Fam(v)$, where $v$ ranges over all leaves that belong to the bunch of $s$, and when $T$ is different from $K_{1,3}$, at least two of these sets are disjoint. When all bunches have a size of at least three, the Bunch construction is the union of seagulls and possibly up to two more one-edge paths covering some leaves (if the number of leaves is not divisible by three). In any case, $\Fam$ separates all leaves. Using this fact and the previous observation, we conclude that $\Fam$ separates $s$ and $b$ unless $s$ is a leaf and $b$ is the edge adjacent to $s$, or vice versa. In conclusion, $\Fam$ covers and separates $E(T)$ and $V(T) \cup E^\ast(T)$.
\end{proof}

\section{Edge-separating path systems in trees}\label{sec:Tedges}

In this section, we will prove \Cref{theorem:edgesepcover-trees}. Recall that $\Tannoying$ is the binary tree of depth $2$, and we defined it before the statement of \Cref{theorem:edgesepcover-trees}. This tree is the only tree where the formula of \Cref{theorem:edgesepcover-trees} fails and thus requires a separate analysis.

\begin{lemma}
    $\spathC(\Tannoying, E) = 4$.
\end{lemma}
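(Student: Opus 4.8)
The plan is to establish the matching bounds $\spathC(\Tannoying,E)\le 4$ and $\spathC(\Tannoying,E)\ge 4$. The upper bound is immediate from an explicit family: take the four paths $P_1 = 4\text{-}2\text{-}5$, $P_2 = 6\text{-}3\text{-}7$, $P_3 = 1\text{-}2\text{-}4$ and $P_4 = 1\text{-}3\text{-}6$. Computing $\Fam(e)$ for the six edges gives $\Fam(12)=\{P_3\}$, $\Fam(13)=\{P_4\}$, $\Fam(24)=\{P_1,P_3\}$, $\Fam(25)=\{P_1\}$, $\Fam(36)=\{P_2,P_4\}$ and $\Fam(37)=\{P_2\}$, which are six distinct nonempty subsets of $\{P_1,P_2,P_3,P_4\}$; hence this family separates and covers $E(\Tannoying)$.

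For the lower bound, suppose for contradiction that $\Fam=\{P_1,P_2,P_3\}$ separates and covers $E(\Tannoying)$. Then the sets $\Fam(e)$, $e\in E(\Tannoying)$, are six pairwise distinct nonempty subsets of $\{P_1,P_2,P_3\}$; in particular $\Fam(12)\ne\Fam(13)$. The first step is the observation that vertex $1$ has degree two in $\Tannoying$, its only incident edges being $12$ and $13$, so a path contains exactly one of $12$, $13$ if and only if it has at least one edge and has vertex $1$ as an endpoint. Thus some path of $\Fam$, say $P_1$, has at least one edge and ends at vertex $1$, and by applying the automorphism of $\Tannoying$ that fixes vertex $1$ and swaps the branches rooted at $2$ and $3$ we may assume the first edge of $P_1$ is $12$. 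Since vertex $2$ has neighbours $1$, $4$, $5$ and $4,5$ are leaves, the edge set of $P_1$ is one of $\{12\}$, $\{12,24\}$, $\{12,25\}$. In each case $P_1$ misses the edges $13$, $36$, $37$ and at least one of $24$, $25$, so at least four of the six edges of $\Tannoying$ are not contained in $P_1$. For each such edge $e$ we have $\emptyset\ne\Fam(e)\subseteq\{P_2,P_3\}$, and these sets are pairwise distinct; but there are only $2^2-1=3$ nonempty subsets of $\{P_2,P_3\}$, a contradiction. Hence $\spathC(\Tannoying,E)\ge 4$, and together with the upper bound this yields equality.

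I expect the only delicate point to be pinpointing the degree-two vertex $1$ as the obstruction: the formula in \Cref{theorem:edgesepcover-trees} would predict $3$, and the reason it fails is precisely that separating the pair $\{12,13\}$ forces a path to terminate at vertex $1$, after which that path is too short and too local to help separate more than two of the six edges. A minor technical care is needed in the ``exactly one of $12,13$'' claim when a path has no edges, but such paths are useless for edge-separation and do not affect the argument.
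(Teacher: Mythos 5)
Your proof is correct. Both arguments open with the same key observation: the degree-two vertex $1$ forces some path of a putative three-path system to terminate at $1$, and by symmetry that path may be taken to contain $12$ and to stay inside the branch at $2$. Where you diverge is in the finish. The paper continues with a case analysis: it notes that the edges $25$, $36$, $37$ must be covered, so the leaves $5,6,7$ must be endpoints of $P_2$ and $P_3$, and then checks the possible endpoint assignments to exhibit a concrete unseparated pair (namely $25$ and $13$). You instead count: the path ending at $1$ contains at most two edges, so at least four of the six edges have $\Fam(e)\subseteq\{P_2,P_3\}$, and since covering and separating force these four sets to be distinct nonempty subsets of a two-element set, the pigeonhole principle gives an immediate contradiction. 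Your pigeonhole finish is shorter and avoids the endpoint case analysis entirely; the paper's version has the minor virtue of naming an explicit unseparated pair. Your explicit verification of the upper-bound family (listing all six sets $\Fam(e)$) is also more detailed than the paper's, which simply exhibits the family $\{124,125,136,137\}$ and asserts it works.
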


\begin{proof}
    Suppose that $\Fam = \{P_1, P_2, P_3\}$ is an edge-separating-covering system.
    The vertex $1$ has degree $2$; hence, it must be the endpoint of some path (if not, the two edges adjacent to such a vertex would not be separated), say $P_1$, in $\Fam$.
    By symmetry, we can assume $P_1$ contains the edge $12$ and does not contain the edge $25$.
    Every edge containing a leaf must be covered by some path; thus, the leaves $5$, $6$, and $7$ must be the endpoint of the paths $P_2$ and $P_3$.
    Since $\Fam$ is a separating family, it must hold that one of $\{5, 6, 7\}$ is an endpoint of both $P_2$ and $P_3$; for the remaining two leaves one is an endpoint of $P_2$, and the other one is an endpoint of $P_3$.
    In any case, $\Fam$ does not separate the edges $25$ and $13$, leading to a contradiction.
    Therefore, $\spathC(\Tannoying, E) \geq 4$.
    On the other hand, the family $\{ 124, 125, 136, 137\}$ is an edge-separating-covering system, thus $\spathC(\Tannoying, E) \leq 4$.
\end{proof}

\subsection{Lower bounds}
We begin
by showing a lower bound on $\spathC(T,E)$ which holds for any tree $T$.

\begin{lemma} \label{lem:cotainf}
Let $T$ be a tree with $h_1$ leaves and  $h_2$ vertices of degree two.
Then
$\spathC(T,E)\geq\max\left\{(2h_1+h_2)/3,(h_1+h_2)/2\right\}$.
\end{lemma}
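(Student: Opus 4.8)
The plan is to prove the two lower bounds separately, and in both cases the key idea is a counting argument that measures how much ``work'' a single path can do at the leaves and degree-two vertices of $T$. Let $\Fam$ be an edge-separating-covering system of $T$ with $|\Fam| = s$. I would first isolate the following two basic observations about any path $P$ in a tree. For a leaf $v$ of $T$, with incident edge $e_v$: if $P$ contains $e_v$ then $v$ is an endpoint of $P$ (a path cannot pass through a vertex of degree one), so $P$ accounts for $e_v$ only when $v$ is one of its two extremes. For a degree-two vertex $v$ with incident edges $e, e'$: any path either avoids both $e$ and $e'$, or contains exactly one of them (when $v$ is an extreme of $P$), or contains both (when $P$ passes through $v$); so $P$ \emph{separates} the pair $\{e,e'\}$ only if $v$ is an extreme of $P$. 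Since each of the $h_2$ ``degree-two pairs'' $\{e_v, e'_v\}$ must be separated by some path in $\Fam$, and each path has only two extremes, we need enough path-endpoints to handle all the required separations and coverings at once.

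For the bound $s \geq (h_1+h_2)/2$: every path has exactly two endpoints, giving $2s$ endpoint-slots in total. Each of the $h_1$ leaves must be an endpoint of at least one path (to cover its incident leaf-edge, by the first observation), and each of the $h_2$ degree-two vertices must be an endpoint of at least one path (to separate its pair of incident edges, by the second observation). These $h_1 + h_2$ vertices are distinct, and each consumes at least one of the $2s$ endpoint-slots, so $2s \geq h_1 + h_2$, i.e. $s \geq (h_1+h_2)/2$.

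For the bound $s \geq (2h_1+h_2)/3$, I would refine the counting to extract more from the leaves. The point is that a single path cannot do too much near the leaves: a path $P$ has two endpoints, so it can ``serve as an endpoint'' for at most two leaves, but if it is endpoint-incident to two leaves $u,w$ then those two leaves are its only two extremes, so $P$ cannot also have a degree-two vertex as an extreme. I would set up a weighting: assign to each leaf a demand of $2$ and to each degree-two vertex a demand of $1$, for a total demand of $2h_1 + h_2$, and argue that each path can satisfy at most $3$ units of demand — either two leaves (weight $2+2 = 4$? — here I need to be careful, so instead) — the cleaner route is: a leaf needs to be covered, which forces it to be a path-endpoint, but separating the leaf from other leaves in its bunch and from nearby degree-two vertices is what actually forces the $2/3$ rate. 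Concretely, I expect the right argument to count, for each path $P$ and each endpoint of $P$, whether that endpoint is a leaf or a degree-two vertex, and to combine the covering constraint on leaves with the separation constraint on degree-two pairs via a charging scheme that never charges more than $3$ to a single path. This is essentially the dual/LP-relaxation view of the ABC and Planar constructions (which achieve the $2/3$ and $1/2$ rates respectively), so the extremal configurations of those constructions tell us exactly where the bound is tight.

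The main obstacle is the $(2h_1+h_2)/3$ bound: the naive ``two endpoints per path'' count only yields the $1/2$ bound, so one must exploit that leaves simultaneously need covering \emph{and} mutual separation, and that a path spending both its endpoints on leaves is thereby prevented from servicing any degree-two vertex. Making this trade-off precise — choosing the right weights so that every path's contribution is capped at $3$ while the total demand is $2h_1 + h_2$ — is the delicate part; everything else is the elementary endpoint bookkeeping above. (Note this lemma only claims the bound before taking ceilings; the integrality adjustments and the single exceptional tree $\Tannoying$ are handled elsewhere.)
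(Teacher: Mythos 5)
Your endpoint count for the bound $|\Fam|\ge (h_1+h_2)/2$ is exactly the paper's argument: every leaf must be a path-end (to be covered) and every degree-two vertex must be a path-end (to separate its two incident edges), and $2|\Fam|$ bounds the number of ends. That part is complete and correct.

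For the bound $|\Fam|\ge (2h_1+h_2)/3$ there is a genuine gap: you never actually exhibit a charging scheme, and you explicitly flag this yourself (``weight $2+2=4$? --- here I need to be careful'', ``Making this trade-off precise \ldots is the delicate part''). The intuition you state --- that a path spending both ends on two leaves $u,w$ still fails to separate the pendant edges $e_u,e_w$, so extra ends are forced --- is the right one, but the proposal stops exactly where the work begins. For the record, the paper does not prove this bound either; it cites the proof of Theorem~1.6 of Falgas-Ravry et al. The missing step can be closed as follows. Let $S$ be the set of leaves at which exactly one path of $\Fam$ ends. No path can end at two leaves of $S$, since then those two pendant edges would have identical (singleton) sets $\Fam(e_u)=\Fam(e_w)$; hence the leaves of $S$ are ends of $|S|$ pairwise distinct paths, so $|S|\le|\Fam|$. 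Counting path-ends: leaves outside $S$ contribute at least $2$ ends each, leaves in $S$ contribute $1$, and degree-two vertices contribute at least $1$, so $2|\Fam|\ge 2(h_1-|S|)+|S|+h_2=2h_1+h_2-|S|\ge 2h_1+h_2-|\Fam|$, which gives $3|\Fam|\ge 2h_1+h_2$. Without some concrete version of this count (or an explicit citation, as the paper uses), the second half of the lemma is unproved in your write-up.
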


\begin{proof}
Let $\mathcal{F}$ be an $E(T)$-separating-covering system.
The bound
$ \frac{2h_1+h_2}{3}\leq |\mathcal{F}|$
appears inside the proof of Theorem 1.6 in \cite[page 349]{FKKLN2014}.
On the other hand, every leaf of $T$ must be the endpoint of some path in $\mathcal{F}$ (otherwise, there would be an uncovered leaf
), and also every vertex of degree two of $T$ must be the endpoint of some path in $\mathcal{F}$, as noted before.
As every path has two endpoints, we immediately have that $h_1 + h_2 \leq 2 |\mathcal{F}|$, as required.
\end{proof}

It remains to show that the upper bound of \Cref{theorem:edgesepcover-trees} holds. We will proceed by considering different cases.
\subsection{More leaves than vertices of degree 2}
We first consider trees with more leaves than vertices of degree two.
The structure of our proof makes it natural to consider the following sub-case first.

\begin{lemma} \label{lema-arista-h1igualh2}
Let $T$ be a tree such that $h_1(T) \geq h_2(T)$, and every leaf is adjacent to a vertex of degree two. Then,
$\spathC(T,E) \leq \left\lceil (h_1(T)+h_2(T))/2\right\rceil = \left\lceil (2h_1(T)+h_2(T))/3\right\rceil$.
\end{lemma}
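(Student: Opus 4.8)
The plan is to first show that the hypotheses pin down the structure of $T$ almost completely, and then build an explicit edge-separating-covering family of size $\lceil (h_1(T)+h_2(T))/2\rceil$ by lifting a Planar construction.

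\emph{Structure of $T$.} For a leaf $v$ let $\phi(v)$ be its unique neighbour, which by hypothesis has degree $2$. If $\phi$ is not injective, say $\phi(v)=\phi(v')=u$ for distinct leaves $v,v'$, then the two neighbours of $u$ are exactly $v$ and $v'$, so $T$ is the path $P_3$ on three vertices. Otherwise $\phi$ is injective, so $h_1(T)\le h_2(T)$; combined with the hypothesis $h_1(T)\ge h_2(T)$ this yields $h_1(T)=h_2(T)=:h$, and $\phi$ is a bijection from the leaves onto the degree-$2$ vertices. List the leaves as $v_1,\dots,v_h$ and set $u_i:=\phi(v_i)$. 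Since the neighbour of a leaf has degree $2$, no vertex of degree $\ge 3$ is adjacent to a leaf; hence in the tree $T^{-}$ obtained by deleting all leaves of $T$, each vertex of degree $\ge 3$ keeps its degree and each $u_i$ becomes a leaf (for $T\notin\{P_3,P_4\}$ the other neighbour of $u_i$ is neither a leaf nor a degree-$2$ vertex, so it survives with degree $\ge 3$). Thus for such $T$ the tree $T^{-}$ has leaf set exactly $\{u_1,\dots,u_h\}$ with $h_2(T^{-})=0$ and $h_1(T^{-})=h$; moreover $h\ge 3$, because a tree with exactly two leaves is a path and the only path with $h_1=h_2$ is $P_4$. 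Finally, since $h_1=h_2$ (or $h_1=2$, $h_2=1$ for $P_3$) we have $\lceil (h_1+h_2)/2\rceil=\lceil (2h_1+h_2)/3\rceil$, so it is enough to prove $\spathC(T,E)\le \lceil (h_1+h_2)/2\rceil$.

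\emph{Exceptional trees.} If $T=P_3$, with leaves $v_1,v_2$ and centre $u$, the one-edge paths $v_1u$ and $uv_2$ separate and cover $E(T)$, so $\spathC(T,E)\le 2$. If $T=P_4=v_1u_1u_2v_2$, the paths $v_1u_1u_2$ and $v_2u_2u_1$ separate and cover $E(T)$, so $\spathC(T,E)\le 2$. In both cases $2=\lceil (h_1+h_2)/2\rceil$, as required.

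\emph{Main construction.} Assume now $T\notin\{P_3,P_4\}$, so $h=h_1(T)=h_2(T)\ge 3$ and $T^{-}$ is as above. Apply the Planar construction (\Cref{Construction-planar}) to $T^{-}$, relabelling so that $u_1,\dots,u_h$ is the cyclic order of leaves used; this produces paths $Q_1,\dots,Q_h$, where $Q_i$ is the unique $u_i$-$u_{i+1}$-path in $T^{-}$ (indices mod $h$). For each $i$, let $R_i$ be the path in $T$ obtained by prepending the pendant edge $v_iu_i$ to $Q_i$, i.e. $R_i$ runs $v_i,u_i,\dots,u_{i+1}$; this is indeed a path since $v_i\notin V(T^{-})$, and $E(R_i)=\{v_iu_i\}\cup E(Q_i)$. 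Put $\Fam=\{R_1,\dots,R_h\}$, so $|\Fam|=h$. I would then verify that $\Fam$ covers and separates $E(T)$ using two facts about the Planar construction on $T^{-}$: by \Cref{grado-cara-planar} each edge of $T^{-}$ lies in exactly two of the $Q_j$, and by \Cref{lema-planar-general} the $Q_j$ separate $E(T^{-})$. Coverage is clear: $v_iu_i\in R_i$, and every edge of $T^{-}$ lies in some $Q_j\subseteq R_j$. For separation, take distinct edges $e,f$ of $T$. If both are pendant, say $e=v_iu_i$, $f=v_ju_j$, then $\Fam(e)=\{R_i\}$ and $\Fam(f)=\{R_j\}$ are distinct singletons. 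If $e=v_iu_i$ is pendant and $f\in E(T^{-})$, then $\Fam(e)=\{R_i\}$ has size $1$ whereas $\Fam(f)=\{R_j:f\in Q_j\}$ has size $2$, so they differ. If $e,f\in E(T^{-})$, then (via the bijection $Q_j\leftrightarrow R_j$) $\Fam(e)$ and $\Fam(f)$ correspond to the sets of planar paths through $e$ and through $f$, which are different because the $Q_j$ separate $E(T^{-})$. Hence $\Fam$ is an edge-separating-covering system and $\spathC(T,E)\le h=\lceil (h_1(T)+h_2(T))/2\rceil$.

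\emph{Main obstacle.} The two delicate points are: (i) establishing the structure statement precisely, in particular isolating the exceptional trees $P_3$ and $P_4$, which fall outside the ``$h_1=h_2\ge 3$'' regime and must be checked directly; and (ii) the separation case pitting a pendant edge against an edge of $T^{-}$, which is exactly where the fact that each edge belongs to \emph{exactly} two planar paths is needed, so as to contrast a singleton fingerprint with a size-two one. The remaining verifications are routine, relying only on properties of the Planar construction already recorded in the excerpt.
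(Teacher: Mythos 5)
Your proof is correct and essentially the same as the paper's: both build the identical family of $h_1$ paths from $v_i$ to $u_{i+1}$ and verify separation via the Planar construction on the leaf-deleted tree, using that each interior edge lies in exactly two of those paths while each pendant edge lies in exactly one. The only differences are presentational --- you lift the construction from $T^{-}$ up to $T$ rather than constructing in $T$ and projecting down, and you isolate $P_4$ explicitly where the paper handles it as the case $h_1=2$.
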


\begin{proof}
    Let $h_1=h_1(T)$.
    Consider any planar drawing of $T$ with its leaves in a circumference, and enumerate the leaves of $T$ as $v_1, \dotsc, v_{h_1}$ in clockwise order according to the drawing.
    For each $1 \leq i \leq h_1$, let $u_i$ be the unique vertex adjacent to $v_i$, which by assumption must be a vertex of degree $2$ in $T$.
    
    If there are $1 \leq i < j \leq h_1$ such that $u_i = u_j$, then $T$ actually consists only of three vertices $v_i u_i v_j$ forming a two-edge path, thus $h_1(T) = 2$ and $h_2(T) = 1$.
    In this case, $\{ v_i u_i, u_i v_j\}$ is an $E(T)$-separating-covering system of size $2 = \lceil 3/2 \rceil = \lceil (h_1(T)+h_2(T))/2 \rceil = \lceil (2h_1(T)+h_2(T))/3 \rceil$, as required.
    Thus, we can assume that all the vertices $u_i$ with $1 \leq i \leq h_1$ are distinct.
    Since $h_1(T) \geq h_2(T)$, this implies that $h_2(T) = h_1(T)$.
    In particular, $\lceil (h_1(T)+h_2(T))/2 \rceil = \lceil (2h_1(T)+h_2(T))/3 \rceil = h_1$, thus it is enough to find an $E(T)$-separating-covering system of size $h_1$.
    
    Consider the family $\Fam = \{ P_1, \dotsc, P_{h_1}\}$ where for each $1 \leq i \leq h_1$, the path $P_i$ goes from $v_i$ to $u_{i+1}$ (where we set $u_{h_1 + 1} = u_1$).
    Then $\Fam$ has size $h_1(T)$, we now show that it is an $E(T)$-separating-covering system.
    
    If $h_1 = 2$, then (since every tree has at least $\Delta(T)$ leaves) it must happen that $\Delta(T) \leq 2$, thus $T$ is a path.
    Since $h_1 = h_2 = 2$, $T$ is actually a three-edge path $v_1 u_1 u_2 v_2$.
    It is straightforward to check that $\Fam$ covers and separates $E(T)$ in this case.
    Thus, from now on, we assume $h_1 \geq 3$.
    
    Now, obtain a tree $T'$ with a path system $\Fam'$ by removing all leaves $\{v_1, \dotsc, v_{h_1}\}$ and edges $\{ v_i u_i : 1 \leq i \leq h_1 \}$ from $T$ and from every path in $\Fam$.
    Since $h_1(T) = h_2(T)$, we know that there are no more degree-$2$ vertices in $T$ apart from $\{u_1, \dotsc, u_{h_1}\}$; and by removing the leaves every vertex $u_i$ will become a leaf in $T'$.
    Thus $h_1(T') = h_1$ and $h_2(T') = 0$.
    Every path in $\Fam'$ now goes from $u_i$ to $u_{i+1}$, thus in fact 
    we have that $\Fam'$ corresponds precisely to the Planar construction (Definition~\ref{Construction-planar}).
    Thus, since $h_1(T') = h_1 \geq 3$, Lemma~\ref{lema-planar-general} implies that $\Fam'$ is an $E(T')$-separating-covering system.
    Remark~\ref{grado-cara-planar} yields that each edge of $T'$ belongs to exactly two paths of $\Fam'$.
    Also, in $\Fam$ we have that the edge of $T$ adjacent to the leaf $v_i$ belongs only to the path $P_i$.
    Putting all together, this implies that all edges of $T$ are covered and separated by $\Fam$.
\end{proof}

Given a leaf $u$ in a tree $T$, we say that $u$ is \emph{useful} if its unique neighbor in $T$ does not have degree $2$.
Since the case where $h_1(T) \geq h_2(T)$ and no useful leaves is covered by \Cref{lema-arista-h1igualh2}, we will now consider cases where there is at least one useful leaf.

The first case to consider is the case where there are no vertices of degree $2$.

\begin{lemma}\label{lema-arista-h1mayorh2}
Let $T$ be a tree such that $h_2(T) = 0$.
Then, \[\spathC (T,E) \leq \left\lceil 2h_1(T)/3 \right\rceil.\] 
\end{lemma}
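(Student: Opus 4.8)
The plan is to reduce everything to the ABC construction of \Cref{lema 4.3}, which already settles the residue $h_1(T)\equiv 0\pmod 3$ exactly, and to dispose of the other two residues by adding or deleting a single leaf. Write $h_1:=h_1(T)$ and $h_1=3k+i$ with $i\in\{0,1,2\}$. If $T$ has no vertex of degree at least $3$, then (since $h_2(T)=0$) we have $T\in\{K_1,K_2\}$, and the empty family, respectively a single one-edge path, does the job; so we may assume $T\notin\{K_1,K_2\}$, which forces $h_1\geq 3$, hence $h_1\geq 3$ if $i=0$, $h_1\geq 4$ if $i=1$, and $h_1\geq 5$ if $i=2$. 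When $i=0$ we are already done: $h_1=3k\geq 3$, and \Cref{lema 4.3} yields an $E(T)$-separating-covering system of size $2k=\lceil 2h_1/3\rceil$.

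For $i=2$, I would attach one new leaf and then project back. Fix a non-leaf vertex $v$ of $T$ and let $\hat T$ be $T$ with a new leaf $w$ added adjacent to $v$; then $h_2(\hat T)=0$ and $h_1(\hat T)=3(k+1)$, so \Cref{lema 4.3} provides an $E(\hat T)$-separating-covering system $\hat\Fam$ of size $2(k+1)=2k+2=\lceil 2h_1/3\rceil$. Deleting $w$ and the edge $vw$ from every path of $\hat\Fam$ (a path ending at $w$ becomes a path ending at $v$, the others are unchanged) produces a family $\Fam$ of paths in $T$ with $|\Fam|\leq|\hat\Fam|$. As $E(\hat T)=E(T)\cup\{vw\}$ and this deletion never changes whether a given edge of $T$ lies on a given path, the fact that $\hat\Fam$ covers and separates $E(\hat T)\supseteq E(T)$ immediately gives that $\Fam$ covers and separates $E(T)$.

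The case $i=1$ requires real work, and here I would delete a leaf. Pick any leaf $u$, with neighbor $v$; as $T\neq K_2$, $v$ is not a leaf, so $d_T(v)\geq 3$. If $d_T(v)\geq 4$, put $T':=T-u$. If $d_T(v)=3$, with remaining neighbors $p$ and $q$, then $p,q$ are not both leaves (otherwise $T=K_{1,3}$, contradicting $i=1$), and we let $T'$ arise from $T$ by deleting $u$ and suppressing the resulting degree-$2$ vertex $v$, i.e.\ replacing the path $p\,v\,q$ by a single edge $pq$ --- a valid simple tree, since $pq\notin E(T)$. In both cases $h_2(T')=0$ and $h_1(T')=3k\geq 3$, so \Cref{lema 4.3} gives an $E(T')$-separating-covering system $\Fam'$ of size $2k$. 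Now lift each path of $\Fam'$ back to a path of $T$ by undoing the suppression (replace the edge $pq$ by $p\,v\,q$ whenever it occurs), and adjoin one further path $P_{\mathrm{new}}$ from $u$ to a leaf $z$ of $T$ lying in the component of $T-v$ containing $p$, chosen so that $P_{\mathrm{new}}$ contains $uv$ and $pv$ but not $vq$ (when $d_T(v)\geq 4$ simply take any leaf $z\neq u$, and there is no edge to un-suppress). The resulting family $\Fam$ has size at most $2k+1=\lceil 2h_1/3\rceil$.

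It remains to check that this $\Fam$ covers and separates $E(T)$, and this is where I expect the only real difficulty. Coverage is immediate: the lifts of $\Fam'$ already cover every edge of $T$ except $uv$, and $P_{\mathrm{new}}$ covers $uv$. For separation one runs through the cases for a pair of distinct edges $e,f\in E(T)$: if both survive in $T'$, a lift of a separating path of $\Fam'$ works (lifting does not affect membership of edges other than $pv$ and $vq$); the pair $\{pv,vq\}$ is separated by $P_{\mathrm{new}}$; if exactly one of $e,f$ lies in $\{pv,vq\}$ and the other, $f$, survives in $T'$, lift a path of $\Fam'$ separating $pq$ from $f$; and if $e=uv$, then $P_{\mathrm{new}}$ contains $uv$, and whenever $P_{\mathrm{new}}$ also contains $f$ some lifted path of $\Fam'$ covers $f$ (directly, or through a path covering $pq$ when $f\in\{pv,vq\}$) yet contains no copy of $uv$, hence separates the pair. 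The genuinely delicate point --- and the reason for the careful choice of $z$ --- is that suppressing $v$ merges $pv$ and $vq$, so no lifted path can distinguish them, and this single pair must therefore be separated by $P_{\mathrm{new}}$; that is exactly why the far endpoint $z$ is required to lie on the $p$-side of $v$.
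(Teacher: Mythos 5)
Your proof is correct and follows essentially the same reduction as the paper: residue $0$ via the ABC construction, residue $2$ by adding and then deleting a leaf attached to a non-leaf vertex, and residue $1$ by deleting a leaf (suppressing its degree-$3$ neighbour when necessary) and adjoining one extra path through the suppressed vertex. The only cosmetic difference is that your $P_{\mathrm{new}}$ runs all the way to a leaf $z$ on the $p$-side, whereas the paper uses the shorter path $uwu_1$; both choices separate $pv$ from $vq$ and the surrounding verification is the same.
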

\begin{proof}
    If $h_1(T)$ is divisible by $3$, then the ABC construction and \Cref{lema 4.3} shows that $\spathC(T,E) \leq 2 h_1(T)/3$.
Thus, we can assume that $h_1(T)$ is not divisible by $3$.
Write $h_1(T) = 3q+s$ with $q \geq 0$ and $s \in \{1,2\}$.

Suppose first that $s = 2$.
If every vertex of $T$ is a leaf, then $T$ must be a single edge and $\spathC(T,E) \leq 1$.
Otherwise, we can build $T'$ obtained by adding a new leaf $u$, joined to an arbitrary non-leaf of $T$.
Then $h_2(T') = h_2(T) = 0$ and $h_1(T') = h_1(T)+1$ is divisible by $3$, so there exists an edge-separating family $\mathcal{F}'$ for $T'$ of size at most $2 h_1(T')/3 = \lceil 2 h_1(T)/3 \rceil$.
This can be transformed into an edge-separating family $\mathcal{F}$ of $T$ by removing $u$ and its adjacent edge from every path of $\mathcal{F}'$ which contains it.
Thus $\spathC(T,E) \leq \lceil 2 h_1(T)/3 \rceil$ in this case.

Now suppose that $s = 1$.
Take any leaf $u$ of $T$, and let $w$ be its unique neighbor.
By assumption, $h_2(T) = 0$, so $d_T(w) \geq 3$.
If $d_T(w) > 3$, consider $T'$ obtained by removing $u$.
Then $h_2(T') = h_2(T) = 0$ and $h_1(T') = h_1(T) - 1$ is divisible by $3$, so there exists an edge-separating family $\mathcal{F}'$ for $T'$ of size at most $2 h_1(T')/3 = \lceil 2 h_1(T)/3 \rceil - 1$.
We obtain an edge-separating family for $T$ from $\mathcal{F}'$ by adding the path $\{ uw \}$.
Thus $\spathC(T,E) \leq | \mathcal{F'}| + 1 \leq \lceil 2 h_1(T)/3 \rceil$, as desired.

Thus we can assume that $d_T(w) = 3$.
Let $u_1, u_2$ be the other two neighbors of $w$.
Consider $T'$ obtained by removing $u$ and $w$, and adding the edge $u_1 u_2$.
Thus $h_2(T') = h_2(T) = 0$ and $h_1(T') = h_1(T) - 1$ is divisible by $3$, so there exists an edge-separating family $\mathcal{F}'$ for $T'$ of size at most $2 h_1(T')/3 = \lceil 2 h_1(T)/3 \rceil - 1$.
We obtain an edge-separating family for $T$ from $\mathcal{F}'$ by replacing, in every path of $\mathcal{F}'$, the appearance of the edge $u_1 u_2$ by the path $u_1 w u_2$; and then adding to $\mathcal{F'}$ the path $u w u_1$.
It is straightforward to check this is an edge-separating family, and therefore $\spathC(T,E) \leq | \mathcal{F'}| + 1 \leq \lceil 2 h_1(T)/3 \rceil$, as desired.
\end{proof}

Now we consider trees with $h_1(T) \geq h_2(T) > 0$ and at least one useful leaf.
The idea is to construct a tree $T'$ with fewer leaves, to find an edge-separating system in $T'$, and to transform it into an edge-separating system of $T$.
The next definition captures this reduction, and the following lemma formalizes the transformation of one system to another.

\begin{definition}
    Let $T$ be a tree, $u$ a useful leaf of $T$, $w$ its unique neighbor, and $v$ a vertex of degree $2$ in $T$.
    We say that the pair $(u,v)$ is a \emph{reduction pair for $T$} if one of the following holds:
    \begin{enumerate}
        \item $w$ has degree at least $4$, or
        \item $w$ has degree $3$ and $v$ is not a neighbor of $w$.
    \end{enumerate}
    If $T$ has a reduction pair, we say $T$ is \emph{reducible}.
\end{definition}

\begin{lemma} \label{lemma:reducingreducibletrees}
    Let $T$ be a reducible tree.
    Then, there exists $T'$ with $h_1(T') = h_1(T)-1$ and $h_2(T') = h_2(T') - 1$ such that $\spathC(T,E) \leq \spathC(T',E) + 1$.
\end{lemma}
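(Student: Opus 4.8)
The plan is to delete the useful leaf $u$, suppress one or two degree-$2$ vertices to obtain a smaller tree $T'$, and then lift a minimum $E(T')$-separating-covering system to one of $T$ that uses exactly one extra, carefully routed path.

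\emph{Constructing $T'$.} Let $w$ be the unique neighbour of $u$, and, when $d_T(w)=3$, let $u_1,u_2$ be the neighbours of $w$ other than $u$. I would build $T'$ from $T$ by: (i) deleting $u$ and the edge $uw$; (ii) if $d_T(w)=3$, \emph{suppressing} the now-degree-$2$ vertex $w$, that is, deleting $w$ and adding the edge $u_1u_2$; (iii) suppressing $v$, that is, deleting $v$ and joining its two neighbours, which I call $x_1,x_2$. Each step keeps the graph a tree, since the two neighbours of a degree-$2$ vertex of a tree are non-adjacent, so the added edge creates no cycle. A short case analysis on $d_T(w)$ then gives $h_1(T')=h_1(T)-1$ and $h_2(T')=h_2(T)-1$: deleting $u$ removes exactly one leaf and creates no new leaf; if $d_T(w)\ge4$ it creates no degree-$2$ vertex, whereas if $d_T(w)=3$ it momentarily makes $w$ a degree-$2$ vertex which step (ii) then suppresses; and since $v\ne w$ and, when $d_T(w)=3$, $v\notin\{u_1,u_2\}$ — this is exactly where the hypothesis that $v$ is not a neighbour of $w$ is used — the vertex $v$ still has degree $2$ after (i)--(ii), so (iii) removes one further degree-$2$ vertex. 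One also records that $x_1x_2\notin E(T)$ and $u_1u_2\notin E(T)$ (each would create a triangle), and that $x_1x_2\ne u_1u_2$ (equality would make $v$ and $w$ two common neighbours of $u_1,u_2$, a $4$-cycle).

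\emph{Lifting the system.} Let $\Fam'$ be an $E(T')$-separating-covering system with $|\Fam'|=\spathC(T',E)$. For each $P\in\Fam'$, let $\widehat P$ be the path in $T$ obtained by replacing every occurrence of $x_1x_2$ by $x_1vx_2$ and, when $d_T(w)=3$, every occurrence of $u_1u_2$ by $u_1wu_2$; since $v,w\notin V(T')$ and $x_1x_2\ne u_1u_2$, this is again a simple path and $P\mapsto\widehat P$ is injective. I set $\Fam=\{\widehat P:P\in\Fam'\}\cup\{P^\ast\}$, where $P^\ast$ is the unique path from $u$ to $v$ in $T$; relabelling $x_1,x_2$ (and, if $d_T(w)=3$, $u_1,u_2$) so that $P^\ast$ reaches $v$ through $x_1$ and leaves $w$ through $u_1$, we get $uw,x_1v\in P^\ast$, $x_2v\notin P^\ast$, $wu_1\in P^\ast$, $wu_2\notin P^\ast$. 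As no $\widehat P$ contains $uw$, the path $P^\ast$ is genuinely new, so $|\Fam|=\spathC(T',E)+1$.

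\emph{Verification and the main obstacle.} Let $R\subseteq E(T)$ be the edges incident to none of $u,v$ and (if $d_T(w)=3$) $w$; these are exactly the edges common to $T$ and $T'$, and for $e\in R$ and $P\in\Fam'$ one has $e\in P\iff e\in\widehat P$, while $x_1x_2\in P\iff x_1v\in\widehat P\iff vx_2\in\widehat P$ and $u_1u_2\in P\iff wu_1\in\widehat P\iff wu_2\in\widehat P$. From these equivalences, covering is immediate: edges of $R$ are covered through $\Fam'$; the edges $x_1v,vx_2$ (resp.\ $wu_1,wu_2$) are covered because $x_1x_2$ (resp.\ $u_1u_2$) is covered in $\Fam'$; and $uw$ is covered by $P^\ast$. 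Separation then follows by a routine split on how many of a pair $e\ne f$ of edges lie in $R$: a pair inside $R$ is separated by a $\widehat P$ coming from a path of $\Fam'$ separating $e$ and $f$ in $T'$; a pair with one edge in $R$ and the other among $\{x_1v,vx_2,wu_1,wu_2\}$ is separated using a path of $\Fam'$ separating that edge from $x_1x_2$ or $u_1u_2$ (using that all these are covered, so such a path exists); and any pair both of whose edges lie in $\{uw,x_1v,vx_2,wu_1,wu_2\}$ is separated by $P^\ast$ when $P^\ast$ contains exactly one of them, and otherwise by some $\widehat P$, none of which contains $uw$. The crux, and the step I expect to be fiddliest, is exactly this last family of pairs: we may add only one path, yet when $d_T(w)=3$ there are up to five ``new'' edges whose mutual separation is not inherited from $T'$. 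The idea making one path suffice is to route $P^\ast$ all the way from $u$ to $v$: ending at $v$ forces it to use exactly one of the two edges at $v$, passing through $w$ forces it to use exactly one of $wu_1,wu_2$, so it breaks both symmetric pairs, and being the only path meeting $uw$ it settles every pair involving $uw$; checking that the edge subdivisions do not undo separations already present in $T'$ is immediate from the displayed equivalences.
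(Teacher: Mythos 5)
Your construction is exactly the paper's: delete $u$, suppress $w$ when it has degree $3$, suppress $v$, lift a minimum system of $T'$ by re-subdividing the added edges, and adjoin the unique $u$--$v$-path, giving the same count $h_1(T')=h_1(T)-1$, $h_2(T')=h_2(T)-1$ and $|\Fam|=|\Fam'|+1$. The only difference is that you actually carry out the covering/separation verification (correctly — including the key point that the single added path hits exactly one edge at $v$ and exactly one of $wu_1,wu_2$), which the paper dismisses as ``straightforward to check.''
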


\begin{proof}
    Let $(u,v)$ be a reduction pair for $T$, let $w$ be the unique neighbor of $u$, and let $v_1, v_2$ be the two neighbors of $v$.
    We consider two cases depending on the two possible outcomes of the definition of reduction pair.

    Suppose first that $w$ has a degree at least $4$.
    Then let $T'$ be obtained by removing the leaf $u$, the vertex $v$, and then adding the edge $v_1 v_2$.
    Note that $h_1(T') = h_1(T)-1$ and $h_2(T') = h_2(T') - 1$.
    Let $\Fam'$ be an edge-separating-covering system of $T'$ of size $\spathC(T',E)$.
    We obtain a path system $\Fam$ in $T$ by extending each path of $\Fam'$ which contains the edge $v_1 v_2$ by replacing this appearance with $v_1 v v_2$; and finally adding the unique $u$-$v$-path $P$.
    Thus $|\Fam| = |\Fam'| + 1$.
    It is simple to check that $\Fam$ is an edge-separating-covering system, thus $\spathC(T,E) \leq |\Fam| = |\Fam'| + 1 = \spathC(T',E) + 1$, as desired.

    Now we suppose that $w$ has degree $3$ and $v$ is not a neighbor of $w$.
    Let $w_1, w_2$ be the two neighbors of $w$ which are not $u$.
    We let $T'$ be obtained by removing the vertices $u, w, v$, and then adding the edges $w_1 w_2$ and $v_1 v_2$.
    (This is well-defined because $\{w,v\} \cap  \{v_1, v_2\}=\emptyset$.)
    Note that $h_1(T') = h_1(T)-1$ and $h_2(T') = h_2(T') - 1$.
    Let $\Fam'$ be an edge-separating-covering system of $T'$ of size $\spathC(T',E)$.
    We obtain a path system $\Fam$ in $T$ by extending each path of $\Fam'$ which contains the edge $v_1 v_2$ by replacing this appearance with $v_1 v v_2$; by extending each path of $\Fam'$ which contains the edge $w_1 w_2$ by replacing this appearance with $w_1 w w_2$, and finally adding the unique $u$-$v$-path $P$.
    Thus $|\Fam| = |\Fam'| + 1$.
    Again, it is straightforward to check that $\Fam$ is an edge-separating-covering system, thus $\spathC(T,E) \leq |\Fam| = |\Fam'| + 1 = \spathC(T',E) + 1$, as desired.    
\end{proof}

If $T$ is a reducible tree and $T'$ is as in \Cref{lemma:reducingreducibletrees}, we say $T$ is \emph{reducible to $T'$}.
If we can apply reductions while bypassing the annoying tree $\Tannoying$, we could find an optimal path system by using induction on $h_2(T)$. The next fact is proven following a detailed analysis of the cases where $h_2(T) \leq 2$.
We show this in the three following lemmas, in which we carefully investigate cases where $1 \leq h_2(T) \leq 2$.

\begin{lemma} \label{lemma:h21-reducible}
    Let $T$ be a tree with at least one useful leaf and $h_1(T) \geq h_2(T) = 1$.
    Then one of the following is true: $T$ is reducible, $\spathC(T,E) = \lceil (2 h_1(T) + h_2(T))/3 \rceil$, or $T = \Tannoying$.
\end{lemma}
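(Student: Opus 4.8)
The plan is to assume $T$ is not reducible and $T \neq \Tannoying$, and to show that these hypotheses force $T$ to be one specific small tree, whose value $\spathC(T,E)$ can then be computed by hand. Throughout, write $v$ for the unique degree-$2$ vertex of $T$, write $v_1,v_2$ for its two neighbours, and write $C_i$ for the component of $T-v$ containing $v_i$. Since $v$ is the only degree-$2$ vertex of $T$, every vertex of $C_i$ other than $v_i$ has degree $1$ or at least $3$ in $T$, and $d_{C_i}(v_i)=d_T(v_i)-1$.

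The core step is a recurring ``deep useful leaf'' argument. Suppose some vertex $x\in C_i\setminus\{v_i\}$ is not a leaf; then $d_{C_i}(x)\geq 3$, since it cannot equal $2$. Let $S_x$ be the subtree of $C_i$ consisting of $x$ together with all vertices that $x$ separates from $v_i$; then $S_x$ has at least three vertices and all of its non-leaf vertices have degree at least $3$ in $T$, so every leaf $\ell$ of $S_x$ is a useful leaf of $T$ whose unique neighbour $w$ lies in $C_i\setminus\{v_i\}$. In particular $v\notin N(w)$ and $w\notin\{v_1,v_2\}$, so $(\ell,v)$ is a reduction pair — by condition (i) if $d_T(w)\geq 4$, and by condition (ii) if $d_T(w)=3$ — contradicting non-reducibility of $T$. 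Combining this with the fact that a leaf adjacent to a vertex of degree at least $4$ also forms a reduction pair with $v$ via condition (i), I would first deduce that $d_T(v_1),d_T(v_2)\in\{1,3\}$, and then that whenever $d_T(v_i)=3$ the two neighbours of $v_i$ other than $v$ are both leaves of $T$. The one point needing care is checking that $S_x$ genuinely has at least three vertices, so that $\ell$ and $w$ are distinct from $v_i$; this is exactly what $d_{C_i}(x)\geq 3$ guarantees.

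Given this structure, a short case check pins down $T$. Each $v_i$ is either a leaf of $T$ (so $C_i=\{v_i\}$) or a degree-$3$ vertex adjacent to two leaves. Not both can be leaves, else $T$ is the three-vertex path, which has no useful leaf and is excluded by hypothesis. If neither $v_1$ nor $v_2$ is a leaf, then $T=\Tannoying$, which is excluded. Hence exactly one of them, say $v_2$, is a leaf, and $T$ is the five-vertex spider $S$ obtained from a path $c$-$v$-$d$ by attaching two pendant leaves to $c$ (here $v_1=c$ has degree $3$ and $v_2=d$). For this tree $h_1(S)=3$ and $h_2(S)=1$, so the target value is $\lceil(2\cdot 3+1)/3\rceil=3$.

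It remains to show $\spathC(S,E)=3$. The inequality $\spathC(S,E)\geq 3$ follows from \Cref{lem:cotainf}, since $(2h_1(S)+h_2(S))/3=7/3>2$ and $\spathC(S,E)$ is an integer. For the reverse inequality, writing $x,y$ for the two pendant leaves attached to $c$, the three paths $xcvd$, $ycv$, and $xcy$ form an edge-separating-covering system of $S$: their endpoints are all leaves, and the sets of indices of paths containing the four edges $cx$, $cy$, $cv$, $vd$ are $\{1,3\}$, $\{2,3\}$, $\{1,2\}$, $\{1\}$ respectively, which are distinct and non-empty. This completes the argument. The main obstacle is the structural analysis of the middle two paragraphs; once $T$ is known to be one of $S$ and $\Tannoying$, everything else is routine.
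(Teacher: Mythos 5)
Your proposal is correct and follows essentially the same route as the paper: assume $T$ is not reducible, use the observation that any non-leaf vertex strictly inside a branch (or a high-degree neighbour of the degree-$2$ vertex) yields a useful leaf forming a reduction pair with $v$, conclude that each neighbour of $v$ is a leaf or a degree-$3$ vertex with two pendant leaves, and reduce to the two surviving trees — the five-vertex spider (for which an explicit three-path system matches the lower bound from \Cref{lem:cotainf}) and $\Tannoying$. Your ``deep useful leaf'' step is just a slightly more systematic packaging of the argument the paper applies twice, and your explicit spider system is a valid alternative to the paper's.
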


\begin{proof}
    Assume that $T$ is not reducible.
    Let $v$ be the unique vertex of degree $2$ in $T$, and let $y_1, y_2$ be its neighbors.
    If $y_1$ has degree $4$ or more, then there exists a leaf $u$ which together with $v$ forms a reduction pair, a contradiction.
    Then $y_1$ can have degree $3$ or $1$, by symmetry the same is true for $y_2$.
    If $y_1$ has degree $3$ and any of the neighbors of $y_1$ apart from $v$ is not a leaf, then there must exist a leaf $u'$ which is useful and is not a neighbor of $y_1$.
    Thus $(u', v)$ forms a reduction pair, a contradiction.
    Thus if $y_1$ has degree $3$, then both of its neighbors are leaves; by symmetry, the same is true for $y_2$.

    It cannot happen that both $y_1$ and $y_2$ are leaves, since then there would be no useful leaf.
    Thus, by symmetry, we are left with only two cases: $y_1$ has degree $3$ and $y_2$ is a leaf; or both $y_1$ and $y_2$ have degree $3$.

    Suppose first that $y_1$ has degree $3$ and $y_2$ is a leaf.
    In this case, the tree must have vertices $\{ u, u', y_1, v, y_2\}$ and edges $\{ u y_1, u' y_1, y_1 v, v y_2 \}$.
    Thus $h_1(T) = 3$ and $h_2(T) = 1$.
    Then $\{ u y_1 v, u' y_1 v y_2, v y_2 \}$ is an edge-separating-covering system of size $3 = \lceil (2 h_1(T) + h_2(T))/3 \rceil$, as required.

    Thus, we can suppose that both $y_1$ and $y_2$ have degree $3$.
    In this case, $T$ is isomorphic to $\Tannoying$, and we are done.
\end{proof}

\begin{lemma} \label{lemma:h22-notreducible}
    Let $T$ be a tree with at least one useful leaf and $h_1(T) \geq h_2(T) = 2$.
    If $T$ is not reducible, then $\spathC(T,V) = \lceil (2 h_1(T) + h_2(T))/3 \rceil$.
\end{lemma}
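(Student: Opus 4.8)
The plan is to show that the stated hypotheses force $T$ to be a single specific small tree, and then to evaluate $\spathC(T,V)$ for that tree by hand. The structural part is the heart of the argument and is carried out exactly as in the proof of \Cref{lemma:h21-reducible}: I examine the neighbourhood of a useful leaf together with the two degree-$2$ vertices, and I use non-reducibility to rule out every configuration but one.

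Concretely, let $u$ be a useful leaf with neighbour $w$, and let $v_1,v_2$ be the two vertices of degree $2$. Since $u$ is useful, $d_T(w)\ge 3$. If $d_T(w)\ge 4$ then $(u,v_1)$ is a reduction pair, so non-reducibility forces $d_T(w)=3$. With $d_T(w)=3$, if some $v_i$ were not adjacent to $w$ then $(u,v_i)$ would again be a reduction pair; hence both $v_1$ and $v_2$ are neighbours of $w$, and since $w$ also has the neighbour $u$, its three neighbours are exactly $u,v_1,v_2$. Write $x_1,x_2$ for the second neighbours of $v_1,v_2$. I would then argue that each $x_i$ is a leaf: it is not a degree-$2$ vertex (the only ones are $v_1,v_2$, and $x_i\in\{v_1,v_2\}$ would create a cycle or violate $d_T(v_i)=2$), and if $x_i$ had degree $\ge 3$ the subtree hanging off $x_i$ away from $v_i$ would contain a further leaf, necessarily useful since its neighbour can be neither $v_1$ nor $v_2$; that useful leaf would force a second copy of the configuration above and hence a cycle. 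Thus $T$ has exactly the six vertices $u,w,v_1,v_2,x_1,x_2$ and the five edges $wu,wv_1,wv_2,v_1x_1,v_2x_2$; that is, $T$ is the spider with centre $w$ and legs of lengths $1,2,2$, so $h_1(T)=3$ and $h_2(T)=2$.

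It then remains to check that $\spathC(T,V)=\lceil(2\cdot 3+2)/3\rceil=3$ for this single tree. For the lower bound I use a counting argument: with $k$ paths, each vertex receives as its \emph{signature} the set of paths containing it; covering makes every signature nonempty and separation makes all six signatures distinct, so $6\le 2^{k}-1$, giving $k\ge 3$. For the upper bound I exhibit the three paths $P_1=x_1v_1wu$, $P_2=x_2v_2wv_1$, and $P_3=uwv_2$, and verify that the signatures $\{1,3\},\{1,2,3\},\{1,2\},\{2,3\},\{1\},\{2\}$ (for $u,w,v_1,v_2,x_1,x_2$ respectively) are pairwise distinct and nonempty, so this family covers and separates $V(T)$. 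Hence $\spathC(T,V)=3$, as claimed. The only delicate step is the structural reduction: one must close off every way of attaching extra vertices — the degree-$\ge 3$ case for $x_i$ and the possibility of a second useful leaf — each ruled out by producing a reduction pair or a cycle. The final numerical verification is routine.
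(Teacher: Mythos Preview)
Your structural argument is correct and reaches the same unique tree as the paper --- the spider with centre $w$ and legs of lengths $1,2,2$ --- though you organise it differently: you start from a useful leaf $u$ and deduce that its neighbour $w$ must have degree $3$ with both $v_1,v_2$ among its neighbours, whereas the paper starts from the $v_1$--$v_2$ path, shows it has length exactly two, and then pins down the pendant leaf at its midpoint. Both routes are short and rely on the same mechanism (any stray useful leaf produces a reduction pair). Your justification that each $x_i$ is a leaf is slightly terse --- the phrase ``a second copy of the configuration above and hence a cycle'' really unpacks as: a second useful leaf $u'$ would force its neighbour $w'$ to be adjacent to both $v_1$ and $v_2$, hence $w'=w$ and $u'=u$, contradicting the location of $u'$ --- but the reasoning is sound.

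One point worth flagging: the ``$V$'' in the lemma statement is a typo for ``$E$''. This lemma sits in Section~\ref{sec:Tedges} on edge separation and is invoked in \Cref{lemma:caseanalysis} to bound $\spathC(T,E)$; accordingly, the paper's own proof exhibits an \emph{edge}-separating-covering system of size $3$, namely $\{x_1v_1wv_2,\ v_1wv_2x_2,\ v_1wu\}$. Your family $\{x_1v_1wu,\ x_2v_2wv_1,\ uwv_2\}$ correctly establishes $\spathC(T,V)=3$, so you have proved the statement as literally written; but for the role the lemma plays downstream one wants the edge version. As it happens your three paths also separate and cover $E(T)$ (the edge signatures are $\{1,3\},\{1,2\},\{2,3\},\{1\},\{2\}$ for $wu,wv_1,wv_2,v_1x_1,v_2x_2$ respectively), so the fix is cosmetic.
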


\begin{proof}
    Suppose $T$ is not reducible.
    Let $v_1$ and $v_2$ be the only vertices of degree $2$ in $T$.
    Let $P$ be the unique $v_1$-$v_2$-path in $T$.
    Let $x_1$ (resp. $x_2$) be the neighbor of $v_1$ (resp. $v_2$) which is not in $P$.
    If $x_1$ is not a leaf, then there exists a useful leaf $u'$ which is not a neighbor of $v_1$; and thus $(u', v_2)$ form a reduction pair, a contradiction.
    Thus $x_1$ is a leaf, and by symmetry $x_2$ is also a leaf.

    Note that $P$ cannot have length one, as in this case $T$ is equal to the path $x_1 v_1 v_2 x_2$ and there would be no useful leaf.
    Thus $P$ has a length of at least two, and therefore the neighbor $y$ of $v_1$ which is not $x_1$ satisfies $y \neq v_2$.
    
    Now suppose that $P$ has at least three edges.
    Since $y$ does not have degree $2$, there must be a neighbor $z$ of $y$ which is not in $P$.
    Then there must exist a leaf $u'$ in the tree generated by cutting the edge $y z$ which does not contain $v_2$; and $u'$ must be a useful leaf since its unique neighbor is not $v_1$ nor $v_2$.
    Also $u'$ has distance at least two to $v_2$, so $(u', v_2)$ form a useful pair, a contradiction.
    
    Thus $P$ has length exactly two, and indeed $P = v_1 y v_2$.
    Again, let $z$ be a neighbor of $y$ which is not in $P$.
    If $z$ is not a leaf, then we can find a useful leaf $u'$ as before and form a useful pair $(u', v_2)$.
    Therefore we have that $z$ is a leaf.
    This completely describes the tree, which must have vertices $\{ x_1, v_1, y, v_2, x_2, z\}$ and edges $\{ x_1 v_1, v_1 y, y v_2, v_2 x_2, yz \}$.
    Then $h_1(T) = 3$ and $h_2(T) = 2$.
    We have that $\{ x_1 v_1 y v_2, v_1 y v_2 x_2, v_1 y z \}$ form an edge-separating-covering system of $T$, and thus $\spathC(T,V) \leq 3 = \lceil (2 h_1(T) + h_2(T))/3 \rceil$, as required.
\end{proof}

\begin{lemma} \label{lemma:h22-reducible}
    Let $T$ be a tree with at least one useful leaf and $h_1(T) \geq h_2(T) = 2$.
    If $T$ is reducible, then one of the following is true: $T$ is reducible to a tree $T'$ which is not $\Tannoying$; or $\spathC(T,E) \leq \lceil (2 h_1(T) + h_2(T))/3 \rceil$.
\end{lemma}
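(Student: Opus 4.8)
The plan is to handle the only remaining subcase of trees with $h_1(T) \geq h_2(T) = 2$ and a useful leaf, namely when $T$ \emph{is} reducible. By \Cref{lemma:reducingreducibletrees}, $T$ is reducible to some $T'$ with $h_1(T') = h_1(T) - 1$ and $h_2(T') = 1$, and $\spathC(T,E) \leq \spathC(T',E) + 1$. If $T'$ is not $\Tannoying$, we are in the first alternative of the statement and done immediately. So the work all lies in the case where the \emph{only} tree $T$ is reducible to is $\Tannoying$; here we must show directly that $\spathC(T,E) \leq \lceil (2h_1(T)+h_2(T))/3 \rceil = \lceil (2h_1(T)+2)/3 \rceil$.

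The first step is to pin down the structure of every tree $T$ which is reducible only to $\Tannoying$. If $T \to T'$ via a reduction pair $(u,v)$ and $T' = \Tannoying$, then $h_1(T) = 5$ and $h_2(T) = 2$, so $\lceil (2 \cdot 5 + 2)/3 \rceil = 4$ is the target. Reversing the two reduction operations from the proof of \Cref{lemma:reducingreducibletrees} starting from $\Tannoying$ (which has $h_1 = 4$, $h_2 = 1$), one re-inserts a degree-$2$ vertex $v$ by subdividing an existing edge, re-attaches a useful leaf $u$ to some vertex $w$, and in the degree-$3$ case also subdivides another edge to raise $w$'s degree; one then enumerates the (finitely many, up to isomorphism) resulting trees $T$. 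For each such $T$ — there will be only a handful — I would exhibit an explicit edge-separating-covering system of size $4$, checking coverage of the five leaf-edges and the two degree-$2$ vertices (each must be a path-endpoint) and then verifying pairwise separation. These are small trees on roughly $7$--$9$ vertices, so the verification is a short finite check analogous to the $\{ x_1 v_1 y v_2,\ v_1 y v_2 x_2,\ v_1 y z \}$ argument in \Cref{lemma:h22-notreducible} but with four paths.

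The main obstacle is purely bookkeeping: making sure the enumeration of ``trees reducible only to $\Tannoying$'' is exhaustive. The subtlety is that $T$ could admit several reduction pairs leading to different $T'$; to land in the hard case we need \emph{all} of them to yield $\Tannoying$, which is a strong constraint and should cut the list down to very few candidates. I would argue that if $T$ has two ``independent'' useful leaves or a spread-out degree-$2$ vertex, then some reduction avoids $\Tannoying$ (since $\Tannoying$ has a very rigid shape — every leaf's neighbor has degree $3$, and its unique degree-$2$ vertex is adjacent to two degree-$3$ vertices), forcing $T$ into one of the listed shapes. Once the list is fixed, the rest is the routine display of four paths per tree and a separation check, so I would keep that part terse.

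\begin{proof}[Proof sketch]
Since $T$ is reducible, \Cref{lemma:reducingreducibletrees} gives a tree $T'$ with $h_1(T') = h_1(T) - 1$ and $h_2(T') = h_2(T) - 1 = 1$, with $\spathC(T,E) \leq \spathC(T',E) + 1$. If $T' \neq \Tannoying$ we are in the first case of the statement, so assume $T'$ is forced to be $\Tannoying$ for every reduction of $T$. Then $h_1(T) = 5$, $h_2(T) = 2$, and the target quantity is $\lceil (2h_1(T) + h_2(T))/3 \rceil = 4$. Reversing the reduction operations of \Cref{lemma:reducingreducibletrees} from $\Tannoying$, together with the rigidity of $\Tannoying$ (each leaf has a degree-$3$ neighbor; its one degree-$2$ vertex lies between two degree-$3$ vertices), restricts $T$ to a short explicit list of trees on at most $9$ vertices; using the constraint that \emph{every} reduction of $T$ must reach $\Tannoying$ eliminates all but a few candidates. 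For each remaining $T$, an explicit family of four paths — with the five leaf-edges covered and each of the two degree-$2$ vertices appearing as a path endpoint — is an edge-separating-covering system, as one checks directly. Hence $\spathC(T,E) \leq 4 = \lceil (2h_1(T) + h_2(T))/3 \rceil$, completing the proof.
\end{proof}
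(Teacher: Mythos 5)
Your plan is essentially the paper's: pass to the case where \emph{every} reduction of $T$ lands on $\Tannoying$, observe that this forces $h_1(T)=5$, $h_2(T)=2$ and a target value of $4$, and then dispose of the finitely many candidate trees by either finding an alternative reduction pair avoiding $\Tannoying$ or exhibiting an explicit four-path system. The paper runs this as a forward case analysis (locating the useful leaf relative to the two degree-$2$ vertices) rather than your reverse enumeration from $\Tannoying$, but the two are interchangeable; the paper's analysis shows that all but one candidate admit a reduction to a non-$\Tannoying$ tree, and the single survivor (a $9$-vertex tree) is handled with an explicit system. Two caveats on your sketch. First, the enumeration-and-check \emph{is} the entire content of the lemma, and your write-up defers it wholesale ("one then enumerates", "as one checks directly"); a complete proof must actually list the candidates and, for each, either name the alternative reduction pair or display the four paths. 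Second, your vertex bound is off: reversing the degree-$3$ reduction re-inserts \emph{three} vertices ($u$, $w$, and $v$) into the $7$-vertex tree $\Tannoying$, so candidates can have $10$ vertices; an enumeration capped at $9$ vertices would silently omit them. (It happens that every $10$-vertex candidate admits an alternative reduction, so the omission would not change the conclusion, but you cannot know that without listing them.)
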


\begin{proof}
    Suppose $T$ is reducible but only can be reduced to $\Tannoying$.
    In particular, this implies that $h_1(T) = h_1(\Tannoying) + 1 = 5$ and $h_2(T) = h_2(\Tannoying) + 1 = 2$.
    Thus $\lceil (2 h_1(T) + h_2(T))/3 \rceil = 4$.
    Hence, we can assume that $T$ cannot be reduced to a tree $T'$ which is not $\Tannoying$ and the task is to show that $\spathC(T,E) \leq 4$.

    Let $v_1, v_2$ be the two vertices of $T$ with degree $2$.
    We partition $V(T) \setminus \{ v_1, v_2\}$ into three sets:
    $A$ contains the vertices $x$ whose $x$-$v_2$-path in $T$ contains the vertex $v_1$;
    $C$ contains the vertices $x$ whose $x$-$v_1$-path in $T$ contains the vertex $v_2$;
    and $B$ contains all other vertices.
    It is possible that $B = \emptyset$, which happens if and only if $v_1$ and $v_2$ are neighbors.
    
    Let $u$ be a useful leaf of $T$ (by assumption, we know that there must be at least one).
    The unique neighbor $w$ of $u$ does not have degree $2$, and thus is neither $v_1$ nor $v_2$.
    It cannot be a leaf either, so $w$ must have degree at least $3$.
    Thus it must happen that $\{v, w\}$ is completely contained in $A$, or in $B$, or in $C$.
    We consider two cases.

    \medskip
    \noindent \emph{Case 1: $A \cup C$ does not contain a useful leaf.}
    Therefore, $B$ must contain a useful leaf, since $T$ has at least one useful leaf $v$.
    This assumption in particular implies that $B \neq \emptyset$, and therefore $v_1, v_2$ are not neighbors.
    Let $w$ be the unique neighbor of $v$, we know that $w \in B$ as well.

    Suppose first that $v_1, v_2$ are neighbors of $w$.
    If $w$ has degree $3$, then $B = \{u, w\}$.
    By assumption, $T$ is reducible, so it must contain a reduction pair.
    Since $A \cup C$ does not contain a useful leaf by assumption, the only possible reduction pairs are $(u, v_1)$ or $(u, v_2)$.
    But $w$ has degree $3$, $v_1, v_2$ are neighbors of $w$, and thus $(u, v_1)$, $(u, v_2)$ are not reduction pairs, a contradiction.
    This implies that $w$ has degree at least $4$.
    In this case, both $(u, v_1)$ and $(u, v_2)$ are reduction pairs.
    If we reduce $T$ using the pair $(u, v_2)$, we obtain a tree $T'$ which by assumption must be isomorphic to $\Tannoying$.
    Note that $d_{T'}(w) = d_T(w)-1 \geq 3$.
    Since $\Delta(\Tannoying) = 3$, it must happen that $3 = d_{T'}(w)$, and indeed $w$ must be adjacent to two leaves in $T'$ and to one degree-$2$ vertex.
    Note that in $T'$ we have that $w$ and $v_1$ are neighbors and have degree $3$ and $2$ respectively.
    This must imply that the component $C$ is actually a singleton vertex.
    By symmetry (by reducing using the pair $(u, v_1)$) we can deduce the same of $A$; but then this will imply that $h_1(T) \leq 3$, a contradiction.

    Therefore we can assume that one of $v_1$ or $v_2$ is not a neighbor of $w$.
    Without loss of generality we can assume $v_2$ is not a neighbor of $w$.
    Then $(u, v_2)$ is a reduction pair.
    We can reduce $T$ to a tree $T'$, which by assumption is isomorphic to $\Tannoying$.
    In $T'$ the vertex $v_1$ still has degree $2$, so it must be the only degree-$2$ vertex of $\Tannoying$.
    Thus $v_1$ is adjacent to two degree-$3$ vertices, each of which is adjacent to two leaves.
    In terms of $T$, this must imply that $C$ is a singleton vertex, and that one of the two neighbors $y_1, y_2$ of $v_2$ is actually $v_1$, but this contradicts the observation made at the beginning of Case 1.
    This finishes Case 1.
    
    \medskip
    \noindent \emph{Case 2: $A \cup C$ contains a useful leaf.}
    Without of loss of generality (by symmetry) we can assume that a useful leaf $v$ and its unique neighbor $w$ satisfy $\{ v, w \} \subseteq A$.
    In particular, we have that $v_2$ is not a neighbor of $w$, and thus $(u, v_2)$ is a reduction pair.
    Let $y_1, y_2$ be the two neighbors of $v_2$, where $y_1$ is closest to $v_1$.
    The analysis will change depending if $w$ has degree $3$ or at least $4$.
    
    \medskip
    \noindent \emph{Case 2.1: $w$ has degree $3$.}
    Let $w_1, w_2$ be the neighbors of $w$ which are not $u$.
    Without loss of generality we can assume that $w_2$ belongs to the unique $u$-$v_1$-path in $T$ and $w_1$ does not.
    After the reduction using $(u, v_2)$, the vertices $u$,$w$, $v_2$ are removed and the edges $w_1 w_2$ and $y_1 y_2$ are added to obtain a tree $T'$.
    By assumption, $T'$ is isomorphic to $\Tannoying$.
    Since $v_1$ still has degree $2$ in $T'$, it must be the only degree-$2$ vertex in $T'$ and it must be adjacent to two degree-$3$ vertices, each of which is adjacent to two leaves.

    Up to symmetry there are four possible options, depending if the edges $w_1 w_2$ and $y_1 y_2$ in $T'$ are adjacent to leaves, or to $v_1$, respectively.
    As we shall see, all cases will imply that the original tree $T$ contain another reduction pair which we can use to reduce it to another tree which is not $\Tannoying$.

    \begin{enumerate}
        \item Suppose first that $w_1 w_2$ is adjacent to an edge and $y_1 y_2$ is adjacent to an edge.
        This implies that (up to relabeling) $T$ has precisely the set of vertices $\{ u, w, w_1, w_2, v_1, v_2, y_1, y_2, u_2, u_3\}$ and its edges are \[ \{ uw, w_1w, w w_2, u_2 w_2, w_2 v_1, v_1 y_1, u_3 y_1, y_1 v_2, v_2 y_2 \}.\]
        In this case, $(u, v_1)$ forms a reduction pair where the reduced tree is not isomorphic to $\Tannoying$.

        \item Suppose that $w_1 w_2$ is adjacent to an edge and $y_1 y_2$ is adjacent to $v_1$.
        In this case, up to relabeling, we have that $y_1 = v_1$, so the vertices of $T$ are $\{ u, w_1, w, w_2, u_2, v_1, v_2, y_2, u_3, u_4 \}$ and the edges are
        \[ \{ uw, w_1w, w w_2, u_2 w_2, w_2 v_1, v_1 v_2, v_2 y_2, y_2 u_3, y_2 u_4 \}.\]
        In this case, $(u_3, v_1)$ forms a reduction pair where the reduced tree is not isomorphic to $\Tannoying$.

        \item Suppose that $w_1 w_2$ is adjacent to $v_1$ and $y_1 y_2$ is adjacent to $v_1$.
        In this case, up to relabeling, we have that $w_2 = y_1 = v_1$, so the vertices of $T$ are \[\{ u, w_1, w, w_2, v_1, v_2, y_2, u_2, u_3, u_4, u_5 \}\] and the edges are
        \[ \{ uw, w_1w, w v_1, v_1 v_2, v_2 y_2, u_3 w_1, u_2 w_1, u_4 y_2, u_5 y_2 \}.\]
        In this case, $(u_4, v_1)$ forms a reduction pair where the reduced tree is not isomorphic to $\Tannoying$.

        \item Suppose that $w_1 w_2$ is adjacent to $v_1$ and $y_1 y_2$ is adjacent to a leaf.
        In this case, up to relabeling, we have that $w_2 = y_1$, so the vertices of $T$ are precisely $\{ u, w, w_1, v_1, v_2, y_1, y_2, u_2, u_3, u_4 \}$ and the edges are
        \[ \{ uw, w_1w, w v_1, v_1 y_1, y_1 v_2, v_2 y_2, u_2 w_1, u_3 w_1, u_4 y_1 \}.\]
        In this case, $(u_2, v_1)$ forms a reduction pair where the reduced tree is not isomorphic to $\Tannoying$.
    \end{enumerate}

    Since this covers all cases, we have reached a contradiction, thus $w$ cannot have degree $3$.

    \medskip
    \noindent \emph{Case 2.2: $w$ has degree at least $4$.}
    After the reduction using $(u, v_2)$, the vertices $u$ and $v_2$ is removed, and the edge $y_1 y_2$ is added.
    By assumption, the obtained tree $T'$ is isomorphic to $\Tannoying$.
    Since $v_1$ still has degree $2$ in $T'$, it must be the only degree-$2$ vertex in $T'$ and it must be adjacent to two degree-$3$ vertices, each of which is adjacent to two leaves.
    Since $w$ has degree at least $3$ in $T'$, it must be adjacent to $v_1$ and have degree exactly $3$ in $T'$ and is adjacent to two leaves and $v_1$.
    Thus, in $T$, we have that $w$ has degree $4$ and is adjacent to $v_1$, to $u$ and two other leaves, say $u_2, u_3$.
    Thus in fact $A = \{u, u_2, u_3, w\}$ and $w$ is a neighbor of $v_1$.
    Here there are two cases, depending if the edge $y_1 y_2$ in $T'$ is adjacent to $v_1$ or not.

    Suppose that $y_1 y_2$ is adjacent to $v_1$ in $T'$.
    This implies that $y_1 = v_1$.
    In $T'$, $y_2$ is a neighbor of $v_1$, so it must have degree $3$ and be adjacent to two other leaves, say $u_3, u_4$.
    This implies that $T$ has vertex set \[\{ u, u_1, u_2, w, v_1, v_2, y_2, u_3, u_4\}\] and edges
    \[ \{ uw, u_1w, u_2w, w v_1, v_1 v_2, v_2 y_2, y_2 u_3, y_2 u_4 \}. \]
    But then $(u_3, v_1)$ is a reduction pair which reduces $T$ to a tree which is not $\Tannoying$, a contradiction.

    Only one case remains, where $y_1 y_2$ is not adjacent to $v_1$ in $T'$.
    Therefore $y_1 y_2$ is adjacent to a leaf in $T'$.
    This implies that, in $T$, $y_2$ is a leaf and there is another leaf $u_3$ which is adjacent to $y_1$.
    Therefore, $T$ has vertex set $\{ u, u_1, u_2, w, v_1, y_1, v_2, y_2, u_3\}$ and edges
    \[ \{ uw, u_1w, u_2w, w v_1, v_1 y_1, y_1 v_2, v_2 y_2, y_1 u_3 \}. \]
    Here is the only case where all reduction pairs actually yield trees which are isomorphic to $\Tannoying$.
    But here $\{ uwv_1y_1v_2, u_1 w v_1 y_1 v_2 y_2, u_2 w v_1, u_1 v w_1 y_1 u_3 \}$ is an edge-separating system of size $4$, and thus $\spathC(T, E) \leq 4$, as required.
    This finishes all cases and finishes the proof.    
\end{proof}

\begin{lemma} \label{lemma:caseanalysis}
    Let $T \neq \Tannoying$ be a tree with at least one useful leaf, $h_1(T) \geq h_2(T)$ and $h_2(T) \leq 2$.
    Then $\spathC(T,E) \leq \lceil (2 h_1(T) + h_2(T))/3 \rceil$. 
\end{lemma}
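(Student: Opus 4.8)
The plan is to treat this lemma as a wrap-up of the three preceding lemmas, splitting on the value of $h_2(T) \in \{0,1,2\}$ and using the reduction machinery of \Cref{lemma:reducingreducibletrees} together with \Cref{lema-arista-h1mayorh2} as the base case, with a small induction on $h_2$ inside the range $\{0,1,2\}$. When $h_2(T) = 0$ there is nothing to do beyond quoting \Cref{lema-arista-h1mayorh2}, which gives $\spathC(T,E) \le \lceil 2h_1(T)/3\rceil = \lceil (2h_1(T) + h_2(T))/3\rceil$.

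For $h_2(T) = 1$: if $T$ is not reducible then \Cref{lemma:h21-reducible}, combined with the hypothesis $T \neq \Tannoying$, forces $\spathC(T,E) = \lceil (2h_1(T) + h_2(T))/3\rceil$ and we are done. If $T$ is reducible, \Cref{lemma:reducingreducibletrees} yields a tree $T'$ with $h_1(T') = h_1(T) - 1$, $h_2(T') = 0$ and $\spathC(T,E) \le \spathC(T',E) + 1$; applying \Cref{lema-arista-h1mayorh2} to $T'$ and the ceiling identity $\lceil x\rceil + 1 = \lceil x+1\rceil$ (so that $\lceil (2h_1(T)-2)/3\rceil + 1 = \lceil (2h_1(T)+1)/3\rceil$) closes the case.

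For $h_2(T) = 2$: if $T$ is not reducible, \Cref{lemma:h22-notreducible} gives the exact value and we are done. If $T$ is reducible, \Cref{lemma:h22-reducible} offers two alternatives; in the first the bound is already established, and in the second $T$ reduces (via \Cref{lemma:reducingreducibletrees}) to some $T' \neq \Tannoying$ with $h_1(T') = h_1(T)-1$, $h_2(T') = 1$, and $\spathC(T,E)\le \spathC(T',E)+1$. Here I would bound $\spathC(T',E) \le \lceil (2h_1(T')+1)/3\rceil$ in two sub-cases: if $T'$ has a useful leaf this is precisely the $h_2 = 1$ case proved above; and if $T'$ has no useful leaf, then every leaf of $T'$ is adjacent to the unique degree-$2$ vertex of $T'$, which forces $h_1(T') \le 2$, hence $h_1(T') \ge h_2(T')$ and \Cref{lema-arista-h1igualh2} applies and yields the same bound. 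A final ceiling manipulation, $\lceil (2h_1(T)-1)/3\rceil + 1 = \lceil (2h_1(T)+2)/3\rceil = \lceil (2h_1(T)+h_2(T))/3\rceil$, finishes the proof.

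Every individual step is short given the earlier lemmas; the one place I expect to need care is the last sub-case, since a reduction does not automatically preserve the "useful leaf" hypothesis, so one must separately note that a tree with exactly one degree-$2$ vertex and no useful leaf is just the three-vertex path (and handle that tiny tree via \Cref{lema-arista-h1igualh2}). I would also double-check each of the ceiling identities, as these are exactly the points where "add one path to a system built for $T'$" turns into "add one to the bound for $T'$".
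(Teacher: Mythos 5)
Your proof is correct and follows essentially the same route as the paper: a case split on $h_2(T)\in\{0,1,2\}$, quoting \Cref{lema-arista-h1mayorh2}, \Cref{lemma:h21-reducible}, \Cref{lemma:h22-notreducible} and \Cref{lemma:h22-reducible}, and reducing via \Cref{lemma:reducingreducibletrees} with the ceiling identity $\lceil x\rceil+1=\lceil x+1\rceil$. Your extra check that the reduced tree $T'$ with $h_2(T')=1$ might lack a useful leaf (forcing the three-vertex path, handled by \Cref{lema-arista-h1igualh2}) is a point the paper's ``by induction'' glosses over, so it is a welcome refinement rather than a deviation.
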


\begin{proof}
    The case $h_2(T) = 0$ follows from \Cref{lema-arista-h1mayorh2}.
    If $h_2(T) = 1$, then by the assumption of $T \neq \Tannoying$, \Cref{lemma:h21-reducible} implies that $T$ is reducible or $\spathC(T,E) = \lceil (2h_2(T) + h_1(T))/3 \rceil$.
    In the last case, we are done, so we can assume $T$ is reducible.
    Thus there exists $T'$ such that $h_2(T') = h_2(T)-1 = 0$ and $h_1(T') = h_1(T)-1$, and
    $\spathC(T,E) \leq \spathC(T',E) + 1$.
    Since $h_2(\Tannoying) = 1$, we also know that $T' \neq \Tannoying$.
    Thus we get that $\spathC(T',E) \leq \lceil (2h_2(T') + h_1(T'))/3 \rceil = \lceil (2h_2(T) + h_1(T))/3 \rceil - 1$,
    so $\spathC(T,E) \leq \lceil (2h_2(T) + h_1(T))/3 \rceil$, as desired.

    If $h_2(T) = 2$ and $T$ is not reducible, then we are done by \Cref{lemma:h22-notreducible}.
    Thus, we can assume $T$ is reducible.
    By \Cref{lemma:h22-reducible}, we know there exists a tree $T'$ with $T' \neq \Tannoying$ such that 
    $h_2(T') = h_2(T)-1 = 1$ and $h_1(T') = h_1(T)-1$, and
    $\spathC(T,E) \leq \spathC(T',E) + 1$.
    By induction, we know that $\spathC(T',E) \leq \lceil (2h_2(T') + h_1(T'))/3 \rceil = \lceil (2h_2(T) + h_1(T))/3 \rceil - 1$, so again we get $\spathC(T,E) \leq \lceil (2h_2(T) + h_1(T))/3 \rceil$, as desired.
\end{proof}

The next short lemma shows that when there are at least $3$ vertices of degree $2$, things become much easier.

\begin{lemma} \label{lemma:h23-reducible}
    Let $T$ be a tree with at least one useful leaf and $h_1(T) \geq h_2(T) \geq 3$.
    Then $T$ is reducible to a tree $T'$ with $T' \neq \Tannoying$.
\end{lemma}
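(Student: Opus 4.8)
The plan is to exhibit a reduction pair explicitly and then observe that the reduced tree cannot be $\Tannoying$ for a trivial numerical reason. By hypothesis $T$ has a useful leaf $u$; let $w$ be its unique neighbour. Since $u$ is useful, $d_T(w)\neq 2$, and also $d_T(w)\neq 1$, as otherwise $T$ would be a single edge and $h_2(T)=0$, contradicting $h_2(T)\geq 3$. Hence $d_T(w)\geq 3$.

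Next I would split on the value of $d_T(w)$ to locate a suitable degree-$2$ partner for $u$. If $d_T(w)\geq 4$, then $(u,v)$ is a reduction pair for \emph{any} vertex $v$ of degree $2$ in $T$ (case (i) of the definition of reduction pair), and such a $v$ exists because $h_2(T)\geq 3>0$. If $d_T(w)=3$, write the neighbours of $w$ as $u,w_1,w_2$. The only vertices of degree $2$ that can possibly be adjacent to $w$ are $w_1$ and $w_2$ (the leaf $u$ has degree $1$), so at most two degree-$2$ vertices of $T$ are adjacent to $w$; since $h_2(T)\geq 3$, there is a degree-$2$ vertex $v\notin\{w_1,w_2\}$, i.e.\ not a neighbour of $w$. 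Then $(u,v)$ satisfies case (ii) of the definition and is therefore a reduction pair. In either case $T$ is reducible.

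Finally, applying \Cref{lemma:reducingreducibletrees} to the reduction pair just found produces a tree $T'$ with $h_1(T')=h_1(T)-1$ and $h_2(T')=h_2(T)-1$ (and with $\spathC(T,E)\le\spathC(T',E)+1$). Since $h_2(T)\geq 3$, we get $h_2(T')\geq 2$, whereas $h_2(\Tannoying)=1$; hence $T'\neq\Tannoying$, which is exactly the claim. There is no real obstacle in this argument: the only point requiring a (trivial) check is the subcase $d_T(w)=3$, where one must verify that a degree-$2$ vertex avoiding the two non-$u$ neighbours of $w$ exists — and this is immediate from $h_2(T)\geq 3$.
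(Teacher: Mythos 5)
Your proof is correct and follows essentially the same route as the paper's: locate a useful leaf $u$ with neighbour $w$, split on whether $d_T(w)\geq 4$ or $d_T(w)=3$, use $h_2(T)\geq 3$ to find a degree-$2$ partner $v$ forming a reduction pair, and then rule out $T'=\Tannoying$ because $h_2(T')=h_2(T)-1\geq 2>1=h_2(\Tannoying)$. The only difference is presentational (you argue directly rather than by contradiction), so there is nothing to add.
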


\begin{proof}
    Suppose $T$ is not reducible.
    Let $u$ be a useful leaf of $T$, and let $w$ be its unique neighbor.
    By definition, $w$ does not have degree $2$, and it cannot be a leaf since then $h_2(T) \geq 3$ cannot hold. Thus $w$ has degree at least $3$.
    If $w$ has degree at least $4$, then for any vertex $v$ of degree $2$ we have that $(u,v)$ is a reduction pair, a contradiction.
    Thus $w$ has degree exactly $3$.
    Since $h_2(T) \geq 3$, there must exist a vertex $v$ of degree $2$ which is not a neighbor of $w$.
    Thus $(u,v)$ is a reduction pair, a contradiction.
    Then $T$ is reducible, and since $h_2(T) \geq 3$ and $h_2(\Tannoying) = 1$, it is not reducible to $\Tannoying$.
\end{proof}

We can now put all the ingredients together to finish the case where $h_1(T) \geq h_2(T)$.

\begin{lemma} \label{lemma:h1atleasth2}
    Let $T$ be a tree with $h_1(T) \geq h_2(T)$ and $T \neq \Tannoying$.
    Then \[\spathC(T,E) \leq \lceil (2h_1(T) + h_2(T)) / 3 \rceil.\]
\end{lemma}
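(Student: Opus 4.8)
The plan is to combine the structural lemmas already established, running an induction on $h_2(T)$ over the class of all trees $T'$ satisfying $h_1(T') \geq h_2(T')$ and $T' \neq \Tannoying$. The governing dichotomy is whether or not $T$ has a useful leaf.

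First I would dispose of the case where $T$ has no useful leaf. By definition this means every leaf of $T$ is adjacent to a vertex of degree $2$, so \Cref{lema-arista-h1igualh2} applies directly and gives
\[
\spathC(T,E) \leq \left\lceil \frac{h_1(T)+h_2(T)}{2} \right\rceil = \left\lceil \frac{2h_1(T)+h_2(T)}{3} \right\rceil,
\]
which is exactly the desired bound (the two ceilings coincide in this case, as that lemma records).

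From now on assume $T$ has at least one useful leaf. If $h_2(T) \leq 2$, then \Cref{lemma:caseanalysis} gives the bound immediately using the hypothesis $T \neq \Tannoying$; this also handles the base of the induction. If $h_2(T) \geq 3$, then \Cref{lemma:h23-reducible} shows $T$ is reducible to some tree $T' \neq \Tannoying$, and \Cref{lemma:reducingreducibletrees} provides $h_1(T') = h_1(T)-1$, $h_2(T') = h_2(T)-1$, and $\spathC(T,E) \leq \spathC(T',E)+1$. Since $h_1(T') = h_1(T)-1 \geq h_2(T)-1 = h_2(T')$ and $h_2(T') < h_2(T)$, the induction hypothesis applies to $T'$, and, using $2h_1(T')+h_2(T') = 2h_1(T)+h_2(T)-3$, it yields
\[
\spathC(T',E) \leq \left\lceil \frac{2h_1(T')+h_2(T')}{3} \right\rceil = \left\lceil \frac{2h_1(T)+h_2(T)}{3} \right\rceil - 1 .
\]
Hence $\spathC(T,E) \leq \lceil (2h_1(T)+h_2(T))/3 \rceil$, which closes the induction.

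I do not expect a genuine obstacle here, as all the real work sits in the preceding lemmas; the only points that need care are (i) phrasing the induction over the whole class of trees with $h_1 \geq h_2$, rather than only those with a useful leaf, so that the reduced tree $T'$ — which may no longer possess a useful leaf — is still covered by the induction hypothesis; and (ii) checking that one reduction step drops $h_1$ and $h_2$ each by exactly one, which preserves both the inequality $h_1 \geq h_2$ and (together with \Cref{lemma:h23-reducible}) the condition $T' \neq \Tannoying$ needed to invoke the hypothesis.
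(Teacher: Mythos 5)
Your proposal is correct and follows essentially the same route as the paper's proof: dispose of the no-useful-leaf case via \Cref{lema-arista-h1igualh2}, use \Cref{lemma:caseanalysis} as the base case $h_2(T)\leq 2$, and for $h_2(T)\geq 3$ combine \Cref{lemma:h23-reducible} with \Cref{lemma:reducingreducibletrees} and induction on $h_2$. Your two points of care (stating the induction hypothesis over all trees with $h_1\geq h_2$, and checking that the reduction drops $h_1$ and $h_2$ by exactly one) match what the paper does.
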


\begin{proof}
    If all leaves are adjacent to a degree-$2$ vertex, then the result follows from \Cref{lema-arista-h1igualh2}.
    Thus we can assume in addition that $T$ has at least one useful leaf.

    We proceed by induction over $h_2(T)$.
    If $h_2(T) \leq 2$ then the result is true by \Cref{lemma:caseanalysis}.
    Thus, we can also assume that $h_2(T) = r \geq 3$, and by induction we can assume that the result holds for all trees $T'$ with $T' \neq \Tannoying$ and $h_1(T') \geq h_2(T')$ and $h_2(T') < r$.
    By \Cref{lemma:h23-reducible}, $T$ is reducible to a tree $T' \neq \Tannoying$.
    Thus $h_2(T') = h_2(T)-1 = r-1 < r$ and $h_1(T') = h_1(T)-1$, and
    $\spathC(T,E) \leq \spathC(T',E) + 1$.
    By the induction hypothesis, we get that $\spathC(T',E) \leq \lceil (2h_2(T') + h_1(T'))/3 \rceil = \lceil (2h_2(T) + h_1(T))/3 \rceil - 1$,
    thus we get $\spathC(T,E) \leq \lceil (2h_2(T) + h_1(T))/3 \rceil$, as desired.
\end{proof}

\subsection{More vertices of degree 2 than leaves}

We next consider the case when there are more vertices of degree $2$ than leaves.

\begin{lemma} \label{lema-arista-h1menorh2}
    Let $T$ be a tree such that $h_1(T)\leq h_2(T)$, then 
 \[ \spathC (T, E) \leq \left\lceil(h_1(T)+h_2(T))/2\right\rceil.\]
\end{lemma}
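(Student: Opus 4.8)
The plan is to reduce to the previously-treated case $h_1 \geq h_2$ by "trading" degree-$2$ vertices into a new tree via a subdivision/contraction argument, or more directly, to handle the bare paths explicitly. Write $h_1 = h_1(T)$ and $h_2 = h_2(T)$ and recall that the bare paths of $T$ partition $E(T)$; the union of the interior bare paths forms a tree-like core. The key observation is that every degree-$2$ vertex must be an endpoint of some path in any separating-covering system (as already noted in the proof of \Cref{lem:cotainf}), and every leaf must be too, so any system has size at least $\lceil (h_1 + h_2)/2 \rceil$; our goal is a matching construction. I would first build a "backbone" family that covers and separates the interior edges and all the degree-$2$ vertices lying on interior bare paths, using paths that run through the core, and then show that at most $\lceil (h_1+h_2)/2 \rceil$ paths suffice by pairing up degree-$2$ vertices as the two endpoints of each path.

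Concretely, first I would consider the tree $T_0$ obtained from $T$ by suppressing all degree-$2$ vertices (i.e., replacing each bare path by a single edge); then $T_0$ has $h_1$ leaves and no degree-$2$ vertices, and by \Cref{lema-arista-h1mayorh2} there is an $E(T_0)$-separating-covering system $\Fam_0$ of size $\lceil 2h_1/3 \rceil$. The idea is to re-expand each suppressed bare path and add extra paths whose endpoints are the degree-$2$ vertices that are not yet endpoints of a path in the lifted family. Since $h_1 \leq h_2$, the quantity $\lceil (h_1+h_2)/2 \rceil$ is at least $h_1$, and in fact one checks that $\lceil (h_1+h_2)/2 \rceil \geq \lceil 2h_1/3 \rceil$ exactly when $h_2 \geq h_1/3$, which holds here; so the budget $\lceil (h_1+h_2)/2 \rceil - \lceil 2h_1/3 \rceil$ of additional paths, each contributing two new endpoints, must be shown to be enough to make every degree-$2$ vertex an endpoint while preserving separation along each bare path. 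Along a single bare path with $t$ internal degree-$2$ vertices, a short direct argument (similar in spirit to the path case $\spathC(P,V)=\lceil (n+1)/2\rceil$) shows that these can be covered and separated using roughly $t/2$ path-endpoints, and these endpoints can be shared pairwise across different bare paths by routing each new path from a degree-$2$ vertex on one bare path, through the core, to a degree-$2$ vertex on another.

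I expect the main obstacle to be the bookkeeping that ensures separation is not destroyed when several degree-$2$ vertices on the \emph{same} bare path are handled: lifting $\Fam_0$ gives paths that traverse an entire bare path, so all edges on that bare path are covered by the same set of lifted paths and are therefore not yet separated from each other, and the degree-$2$ vertices on it are not yet separated either. The fix is to allocate, for each bare path of length $\ell$ (with $\ell - 1$ internal degree-$2$ vertices), roughly $\lceil (\ell-1)/2 \rceil$ new path-endpoints that "cut" the bare path into short segments, exactly as in the optimal separating system of a path; one must verify the global count of endpoints used is at most $2\lceil (h_1+h_2)/2 \rceil$ and that odd/even parity of $h_1 + h_2$ is absorbed correctly (handling the possibly-unpaired endpoint by a single one-edge path). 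A clean way to organize this is to induct on $h_2 - h_1$: if $h_2 > h_1$, find a bare path with an internal degree-$2$ vertex $v$, split $T$ at $v$ or shorten that bare path to produce $T'$ with $h_2(T') = h_2(T) - 1$ and the same (or controlled) $h_1$, apply the inductive hypothesis, and re-insert $v$ together with one additional path endpoint; the base case $h_1 = h_2$ is covered by \Cref{lema-arista-h1igualh2} together with \Cref{lemma:h1atleasth2}. The delicate point throughout is maintaining $T' \neq \Tannoying$ and the hypothesis $h_1(T') \leq h_2(T')$ (or switching to the already-proven regime exactly when equality is reached), so the induction must be set up so that the excess $h_2 - h_1$ strictly decreases while never forcing us outside the cases already handled.
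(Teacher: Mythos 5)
Your overall plan --- induct on the excess $h_2(T)-h_1(T)$, reduce to the base case $h_1=h_2$ (which is indeed covered by \Cref{lemma:h1atleasth2}; note that $\Tannoying$ never arises in this regime since $h_1(\Tannoying)=4>1=h_2(\Tannoying)$), and pay for the degree-$2$ vertices by making them endpoints of new paths --- is the right one and is essentially the paper's strategy. But the inductive step as you state it does not close. You propose to remove \emph{one} degree-$2$ vertex $v$ per step, obtaining $T'$ with $h_2(T')=h_2(T)-1$, and to ``re-insert $v$ together with one additional path endpoint.'' A path system cannot gain a single endpoint: any new path contributes two. If each step adds one whole path, the recursion gives $\spathC(T,E)\leq \spathC(T'',E)+(h_2-h_1)$ with $T''$ in the base case, i.e.\ a bound of $h_1+(h_2-h_1)=h_2$, which exceeds $\lceil(h_1+h_2)/2\rceil$ as soon as $h_2>h_1$. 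The missing ingredient, which is how the paper proceeds, is to remove \emph{two non-adjacent} degree-$2$ vertices $u,v$ at once (these exist because $h_2\geq 3$ whenever $h_2>h_1\geq 2$), suppress both to get $T'$ with $h_2(T')=h_2(T)-2$ and $h_1(T')=h_1(T)$, apply the inductive hypothesis, and add back the single $u$--$v$ path, whose two endpoints serve both vertices and whose presence separates the two edges at $u$ and the two edges at $v$; the parity issue when $h_1+h_2$ is odd is absorbed by first subdividing an arbitrary edge, not by a leftover one-edge path.

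Your alternative first route (suppress all degree-$2$ vertices, take the $\lceil 2h_1/3\rceil$-size system of \Cref{lema-arista-h1mayorh2} on the core, then add paths ending at degree-$2$ vertices) has a quantitative gap as well: the additional budget of $\lceil(h_1+h_2)/2\rceil-\lceil 2h_1/3\rceil$ paths supplies only about $h_2-h_1/3$ new endpoints, whereas all $h_2$ degree-$2$ vertices must become endpoints (a path passing through a degree-$2$ vertex without ending there contains both of its edges, so cannot help separate them). You would therefore have to reroute roughly $h_1/3$ of the lifted paths so that they terminate at degree-$2$ vertices, which you do not describe and which would require redoing the separation analysis of the core system. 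So, as written, neither route yields the stated bound; the two-at-a-time pairing of degree-$2$ vertices as the endpoints of a single new path is the step you need to make precise.
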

  
\begin{proof}
    We first consider the case where $h_1(T) + h_2(T)$ is even.
    For this, we proceed by induction in $h_2(T)-h_1(T)\geq 0$.
    In the base case, we have $h_2(T)-h_1(T) = 0$.
    This implies that $h_2(T) = h_1(T)$.
    In particular $T \neq \Tannoying$, and it also holds that $\left\lceil(h_1(T)+h_2(T))/2\right\rceil = \left\lceil(2h_1(T)+h_2(T))/3\right\rceil$.
    Then, in this case the lemma readily follows from Lemma~\ref{lemma:h1atleasth2}.
  
    For the inductive step, we assume that $h_2(T) - h_1(T) = 2r > 0$, and that the statement holds for each $T'$ as in the statement with $h_2(T') - h_1(T')$ even and less than $2r$.
    Since $h_2(T) - h_1(T) > 0$, we must have that $|E(T)| \geq 1$.
    Every tree with $|E(T)|\geq 1$ has at least two leaves.
    Since $h_2(T)>h_1(T)\geq 2$, we deduce that $h_2(T)\geq 3$.
    Therefore, there must exist two vertices $u,v \in V(T)$ of degree $2$ each, which are not adjacent to each other.
    
    Suppose $u_1, u_2$ are the neighbors of $u$ and $v_1, v_2$ are the neighbors of $v$, by assumption we have $v \notin \{u_1, u_2\}$ and $u \notin \{v_1, v_2\}$.
    We obtain $T'$ from $T$ by removing both $u$ and $v$ and adding the edges $u_1 u_2$ and $v_1 v_2$.
    Thus $h_1(T') = h_1(T)$ and $h_2(T') = h_2(T)-2$, and hence $h_2(T') - h_1(T') = 2r-2 < 2r$.
    By the inductive hypothesis, there exists an $E(T')$-separating-covering system $\mathcal{F}'$ of size $|\mathcal{F}'| \leq (h_1(T)+h_2(T)-2)/2 = \lceil (h_1(T)+h_2(T))/2 \rceil - 1$.
    
    In $T$, we obtain a path family $\Fam$ from $\Fam'$ as follows: for every path in $\Fam'$ which contains the edge $u_1 u_2$ we replace it with $u_1 u u_2$, and for every appearance of the edge $v_1 v_2$ we replace it with $v_1 v v_2$.
    To finish, we add to $\Fam$ the (unique) path $P$ in $T$ which goes from $u$ to $v$.
    We have that 
    $|\mathcal{F}|=|\mathcal{F}'|+1 \leq \frac{h_1(T)+h_2(T)}{2}=\left\lceil\frac{h_1(T)+h_2(T)}{2}\right\rceil$.
    
    It is easy to see that $\Fam$ covers $E(T)$, so it only remains to check that $\Fam$ separates $E(T)$.
    Indeed, let $e_1, e_2$ be two edges of $E(T)$.
    If both of them are in $E(T')$, then they are separated (since they were separated by $\Fam'$).
    Next, note that if $e$ is adjacent to $u$, then it must be contained in the paths of $\Fam$ which correspond to paths of $\Fam'$ which contain $u_1 u_2$; and similarly if $e$ is adjacent to $v$, then it must be contained in the paths of $\Fam$ which correspond to paths of $\Fam'$ which contain $v_1 v_2$.
    Therefore, in the case where $e_1$ is adjacent to $u$ and $e_2$ is not adjacent to $u$ we are done, again by the separating property of $\Fam'$.
    If both $e_1, e_2$ are adjacent to $u$, they are separated by $P$.
    An analogous reasoning works if $u$ is replaced by $v$, and this covers all cases.
    This finishes the induction.

    It thus only remains to consider the case where $h_2(T)+h_1(T)$ is odd.
    As we shall see, we can reduce this to the previous case.
    Take an arbitrary edge $uv \in E(T)$ and obtain a tree $T'$ by replacing it with a path $uwv$.
    This step creates a new vertex of degree $2$, hence $h_2(T') = h_2(T)+1$ and $h_1(T') = h_1(T)$.
    In particular, $h_2(T') \geq h_1(T')$ and $h_2(T')+h_1(T')$ is even.
    Thus, by using what we have already shown, we have that $T'$ has a path system $\Fam'$ which separates and covers $E(T')$ and
    \[ |\Fam'| \leq \left\lceil \frac{h_1(T')+h_2(T')}{2} \right\rceil = \left\lceil \frac{h_1(T)+h_2(T)+1}{2} \right\rceil = \left\lceil \frac{h_1(T)+h_2(T)}{2} \right\rceil, \]
    where in the last equality we used that $h_2(T)+h_1(T)$ is odd.
    We obtain a path system $\Fam$ in $T$ by replacing every appearance of the edges $uw$ and $wv$ by $uv$.
    This path system covers and separates $E(T)$, thus we are done.
\end{proof}

\subsection{Proof of Theorem~\ref{theorem:edgesepcover-trees}}

Now we have the ingredients to give the proof of \Cref{theorem:edgesepcover-trees}.

\begin{proof}[Proof of \Cref{theorem:edgesepcover-trees}]
Let $T$ be a tree which is not $\Tannoying$, $h_1 = h_1(T)$ and $h_2 = h_2(T)$.
\Cref{lem:cotainf} yields $\max \{ (2 h_1 + h_2)/3, (h_1+h_2)/2 \} \leq\spathC(T,E)$, it remains to show the other inequality.
If $h_1 \geq h_2$, then \Cref{lemma:h1atleasth2} implies the desired inequality.

Thus, we can suppose that $h_1<h_2$.
In this case, \Cref{lema-arista-h1menorh2} yields that
$\spathC(T,E)=\left\lceil(h_1+h_2)/2\right\rceil\leq \max \{ \lceil (2 h_1 + h_2)/3 \rceil, \lceil  (h_1+h_2)/2 \rceil \}$.
This covers all cases and finishes the proof.
\end{proof}

\section{Vertex-separating path systems in trees}\label{sec:Tvertices}

This section presents our study on $\spathC(T,V)$ and $\spathC(T, V\cup E^*)$. 
\subsection{Lower bounds}
We say that a path $P$ \emph{kisses} an edge $xy$ if $|V(P)\cap\{x,y\}|=1$. More over, we say that a vertex $v \in V(P)$ \emph{kisses} $xy$ if $V(P)\cap\{x,y\}=v$.
\begin{lemma}\label{every edge need to be kissed}
    Let $T=(V,E)$ be a tree and let $\mathcal{F}$ be a $V$-separating system.
    Then, for every edge $e$, there is a path in $\mathcal{F}$ kissing $e$.
\end{lemma}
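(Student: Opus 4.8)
The plan is to prove the contrapositive in spirit: assume for contradiction that some edge $e = xy$ is \emph{not} kissed by any path of $\mathcal{F}$, and show that $\mathcal{F}$ then fails to separate some pair of vertices. The key observation is that ``$P$ does not kiss $xy$'' means $|V(P) \cap \{x,y\}| \in \{0, 2\}$; so every path of $\mathcal{F}$ either avoids both endpoints of $e$, or contains both of them. Since $e$ is an edge of the tree $T$, the unique path in $T$ between $x$ and $y$ is the single edge $e$ itself, so any path $P$ that contains both $x$ and $y$ must actually traverse the edge $e$.

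First I would set up the partition of $T$ induced by deleting $e$: write $T - e = T_x \sqcup T_y$, where $T_x$ is the component containing $x$ and $T_y$ the one containing $y$. Then I claim that for every path $P \in \mathcal{F}$, either $V(P) \subseteq V(T_x)$, or $V(P) \subseteq V(T_y)$, or $P$ traverses $e$ (hence contains both $x$ and $y$). Indeed, if $P$ had a vertex in $T_x$ and a vertex in $T_y$, then $P$ would have to use the edge $e$ (as $e$ is a cut-edge), so $P$ contains $x$ and $y$; and if $P$ stays within one component but is not entirely contained in it, that is impossible. This is where the no-kissing hypothesis is used: it rules out exactly the ``contains one endpoint but not the other'' behaviour, which is what kissing is.

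Next I would exploit this to exhibit an unseparated pair. Consider the two vertices $x$ and $y$ themselves. Every path $P$ with $x \in P$ must (by the trichotomy above, since $P$ does not kiss $e$) also contain $y$: it cannot lie inside $V(T_y)$ since $x \notin V(T_y)$, and if it lies inside $V(T_x)$ it still does not separate... hmm, that is not quite it — a path inside $T_x$ could contain $x$ but not $y$. So instead I would pick the pair more carefully. Let $P^\ast \in \mathcal{F}$ be a path containing $x$ (if none exists, then $x$ and its neighbour in $T_x$, or $x$ and $y$, are trivially unseparated for a different reason; handle that degenerate case separately, or note every $V$-separating system on $|V|\ge 2$ must in fact cover all but at most one vertex). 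The robust approach: every path either lies entirely in $T_x$ or entirely in $T_y$ or traverses $e$. So $\mathcal{F}(x) \setminus \mathcal{F}(y)$ consists only of paths entirely inside $T_x$, and $\mathcal{F}(y)\setminus\mathcal{F}(x)$ only of paths entirely inside $T_y$. To force a contradiction, I would instead separate $x$ from $y$ is the wrong target — rather, I should compare $x$ with a vertex $z \neq x$ chosen so that $\mathcal{F}(x) = \mathcal{F}(z)$. A clean choice: since $\mathcal{F}$ separates $V$ it in particular must distinguish $x$ from $y$, so there is a path containing exactly one of them — but that path kisses $e$, contradiction, provided such a separating path exists, which it must since $x \neq y$. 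This is the crux and it is immediate: $\mathcal{F}$ separates $x$ and $y$, so some $P \in \mathcal{F}$ contains exactly one of $x, y$, i.e. $|V(P) \cap \{x,y\}| = 1$, i.e. $P$ kisses $e$.

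So the real proof is short: the hard part is merely noticing that the separating condition applied to the \emph{pair of endpoints of $e$} already yields a kissing path, with no case analysis at all. I would therefore present it as: let $e = xy$; since $\mathcal{F}$ is a $V$-separating system and $x \neq y$, there is $P \in \mathcal{F}$ with (say) $x \in P$ and $y \notin P$, whence $V(P) \cap \{x,y\} = \{x\}$ has size one, so $P$ kisses $e$. The only subtlety to double-check is the edge case where $T$ has a single edge and two vertices, where this still works, and to confirm the definition of ``kisses'' matches $|V(P)\cap\{x,y\}|=1$ as stated. No genuine obstacle remains.
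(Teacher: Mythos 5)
Your final argument is exactly the paper's proof: the separating condition applied to the pair $x,y$ directly yields a path $P$ with $|V(P)\cap\{x,y\}|=1$, i.e.\ a path kissing $e$. The preliminary detour through the components of $T-e$ is unnecessary, but the conclusion you reach is correct and identical in approach to the paper's.
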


\begin{proof}
Assume that there exists an edge $e=xy$ with no path in $\mathcal{F}$ kissing $e$.
Then, every $P\in \mathcal{F}$ satisfies $|V(P)\cap\{x,y\}|=0$ or $|V(P)\cap\{x,y\}|=2$.
In either case, $\mathcal{F}$ does not separate $x$ and $y$, which is a contradiction.
\end{proof}

\begin{lemma}
    Let $T=(V,E)$ be a tree and let $\mathcal{F}$ be a $V$-separating system.
    If $e=xy \in E$ is such that $d_T(x)=d_T(y)=2$, then $x$ or $y$ is the end of a path in $\mathcal{F}$.
\end{lemma}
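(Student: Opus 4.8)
The plan is to derive this quickly from \Cref{every edge need to be kissed}. First I would apply that lemma to the edge $e = xy$, obtaining a path $P \in \mathcal{F}$ that kisses $e$, i.e. $|V(P) \cap \{x,y\}| = 1$. By the symmetry between $x$ and $y$, I may assume without loss of generality that $x \in V(P)$ and $y \notin V(P)$; the argument in the remaining case is identical with the roles of $x$ and $y$ interchanged, and this is the only place where the hypothesis $d_T(y) = 2$ (as opposed to just $d_T(x) = 2$) is needed.

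Next I would exploit the degree hypothesis on $x$. Since $d_T(x) = 2$, the vertex $x$ is incident in $T$ to exactly two edges: the edge $xy$ and one further edge $xx'$, where $x'$ denotes the other neighbour of $x$. Now recall the basic dichotomy for a vertex lying on a path: it is either one of the two extremes of the path (and is then incident to exactly one edge of the path) or an internal vertex of the path (and is then incident to exactly two edges of the path). If $x$ were an internal vertex of $P$, then $P$ would have to contain both edges of $T$ incident to $x$, in particular the edge $xy$, and hence $P$ would contain the vertex $y$, contradicting $y \notin V(P)$. Therefore $x$ must be an extreme of $P$, and so $x$ (or $y$, in the symmetric case) is the end of a path in $\mathcal{F}$, as claimed.

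I do not anticipate any genuine obstacle here. The only point that deserves to be spelled out carefully is the combination of the "extreme versus internal" trichotomy for a vertex on a path with the fact that, in a tree, the two edges incident to a degree-$2$ vertex are uniquely determined; together these force an internal visit of $P$ to $x$ to traverse the edge $xy$, which is exactly what produces the contradiction.
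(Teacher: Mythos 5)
Your proof is correct and follows exactly the paper's route: invoke \Cref{every edge need to be kissed} to get a path kissing $e$, then use the degree-$2$ hypothesis to force the kissing vertex to be an endpoint. The paper states this last step in one sentence; you have simply spelled out the (correct) justification that an internal visit to a degree-$2$ vertex would have to traverse both incident edges, including $xy$.
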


\begin{proof}
    By Lemma \ref{every edge need to be kissed}, for every edge $e$, there is a path in $\mathcal{F}$ kissing $e$. Since $d_T(x)=d_T(y)=2$, either $x$ or $y$ is the end of the path in $\mathcal{F}$ kissing $e$.
\end{proof}

The following lemma gives a lower bound on $\spathC(T,V)$. This result is analogous to the lower bound for $\spathC(T,E)$ given in \Cref{lem:cotainf}.

\begin{lemma}\label{lem:cotainfsepararV}
    For any tree $T$,
    \[
        \spathC(T,V)\geq \max\left\{(h_1(T) + h_2^\ast(T))/2,(2h_1(T) + h_2^\ast(T))/3\right\}.\]
\end{lemma}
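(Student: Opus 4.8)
The plan is to follow the template of \Cref{lem:cotainf}: fix a $V$-separating-covering system $\mathcal{F}$ of $T$, and prove the two inequalities $h_1(T)+h_2^\ast(T)\le 2|\mathcal{F}|$ and $2h_1(T)+h_2^\ast(T)\le 3|\mathcal{F}|$ separately. Both rest on a single structural claim, the \emph{bare-path forcing claim}: if $P=x_0x_1\cdots x_k$ is a bare path of $T$ (so $x_1,\dots,x_{k-1}$ have degree two), then $\mathcal{F}$ contains at least $|E(P)|-1$ pairs $(x_i,Q)$ with $1\le i\le k-1$, $Q\in\mathcal{F}$ and $x_i$ an endpoint of $Q$ when $P\notin\mathcal{I}$ (a vacuous statement when $|E(P)|=1$), and at least $|E(P)|-2$ such pairs when $P\in\mathcal{I}$, the pairs being counted as distinct as $(\text{vertex},\text{path})$-pairs.

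To prove the claim I would invoke \Cref{every edge need to be kissed}: each edge $x_jx_{j+1}$ of $P$ is kissed by some path of $\mathcal{F}$, and the endpoint of that edge lying \emph{inside} the kissing path, if it has degree two, must be an endpoint of the path. For a ``middle'' edge $x_jx_{j+1}$ with $1\le j\le k-2$ both of its endpoints are interior to $P$, so this always produces an incidence at an interior (degree-two) vertex; moreover these $k-2$ incidences are pairwise distinct, since if two of them coincided the common path would contain a degree-two vertex but neither of its neighbours, hence would be a single-vertex path. When $P\notin\mathcal{I}$ and $|E(P)|\ge 2$, the path $P$ has a leaf extreme $x_0$, and the leaf edge $x_0x_1$ yields one further incidence, at $x_1$ (its inside endpoint must be $x_1$, since a path through the leaf $x_0$ avoiding $x_1$ would again be the single-vertex path $\{x_0\}$), distinct from the middle-edge incidences. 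Summing $|E(P)|-1$ over the bare paths not in $\mathcal{I}$ and $|E(P)|-2$ over the bare paths in $\mathcal{I}$, and using that interior vertices of distinct bare paths are disjoint, assembles a family $\mathcal{D}$ of pairwise distinct pairs $(d,Q)$ with $d$ of degree two and an endpoint of $Q$, with $|\mathcal{D}|\ge h_2^\ast(T)$; this is exactly where the identity $h_2^\ast=h_2-|\mathcal{I}|$ enters.

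Granting the claim, the first inequality is a count of endpoint incidences: every leaf is an endpoint of some covering path, giving $h_1$ incidences at distinct leaves, disjoint from $\mathcal{D}$ since leaves are not degree-two vertices; as each path of $\mathcal{F}$ has at most two endpoints, the number of distinct endpoint incidences is at most $2|\mathcal{F}|$, so $h_1+h_2^\ast\le 2|\mathcal{F}|$. For the second inequality I would additionally use that, for a leaf $\ell$, the profile $\mathcal{F}(\ell)$ is precisely the set of paths having $\ell$ as an endpoint: the $h_1$ profiles are non-empty, pairwise distinct, and at most $|\mathcal{F}|$ of them are singletons, so $\sum_\ell|\mathcal{F}(\ell)|\ge 2h_1-|\mathcal{F}|$; on the other hand, for every path $P$ the number of its endpoints that are leaves plus the number of its endpoints appearing in $\mathcal{D}$ is at most the number of endpoints of $P$, so $\sum_\ell|\mathcal{F}(\ell)|+|\mathcal{D}|\le 2|\mathcal{F}|$. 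Combining the two gives $2h_1-|\mathcal{F}|+h_2^\ast\le 2|\mathcal{F}|$, that is $2h_1+h_2^\ast\le 3|\mathcal{F}|$.

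The main obstacle is the bare-path forcing claim — in particular verifying rigorously that the incidences coming from the various edges of a bare path are pairwise distinct, and handling single-vertex paths in $\mathcal{F}$, which produce exactly the degeneracies flagged above and which cannot in general be removed without loss of generality. My intended remedy is to carry the number $t$ of single-vertex paths through the estimates: a single-vertex path contributes only one endpoint incidence rather than two, but it can also cause at most one collision in the construction of $\mathcal{D}$, so the resulting $t$-terms cancel and both displayed inequalities persist verbatim.
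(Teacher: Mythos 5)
Your proposal is correct, and its core is the same as the paper's: the kissing lemma (\Cref{every edge need to be kissed}) forces, inside each bare path, one path-end at a degree-two vertex per interior edge plus one more per leaf edge, yielding $h_2^\ast(T)$ forced ends at degree-two vertices, which are then combined with the $h_1(T)$ forced ends at leaves. Two points differ genuinely. First, the zero-length paths: the paper disposes of them up front by an induction that deletes the offending vertex and passes to a smaller tree, whereas you carry the number $t$ of such paths through both estimates; your accounting is right (each single-vertex path at a leaf or degree-two vertex destroys at most one forced incidence, but also contributes only one endpoint to the $2|\mathcal{F}|$ budget, so the $t$'s cancel), and this is arguably tidier than the paper's reduction, whose statement of the induction is itself slightly garbled. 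Second, and more substantially, your derivation of the bound $2h_1+h_2^\ast\le 3|\mathcal{F}|$ is a clean double count: since leaf profiles $\mathcal{F}(\ell)$ are exactly the sets of paths ending at $\ell$, are nonempty, and are pairwise distinct, at most $|\mathcal{F}|$ of them are singletons, giving $\sum_\ell|\mathcal{F}(\ell)|\ge 2h_1-|\mathcal{F}|$, which combined with $\sum_\ell|\mathcal{F}(\ell)|+|\mathcal{D}|\le 2|\mathcal{F}|$ closes the argument. The paper instead follows the informal pairing/optimization argument of Falgas-Ravry et al.\ (``the best way to do this is as follows\dots''), argued by contradiction; your version proves the same inequality rigorously and would be a genuine improvement in exposition. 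One small item to make fully explicit when writing it up: the incidence at $x_1$ produced by a leaf edge $x_0x_1$ must be checked distinct from the middle-edge incidence at $x_1$ coming from $x_1x_2$; this holds because the former path omits $x_0$ while the latter (not being a single-vertex path) must contain $x_0$, so the two paths differ.
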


\begin{proof}
  Let $\mathcal{F}$ be a $V$-separating-covering system.
  We show first that we can assume that $\mathcal{F}$ does not have a path $P = \{v\}$ of length zero, where $v$ is a leaf or a vertex of degree $2$ in $T$.
  Indeed, assume otherwise.
  If $v$ is a leaf, then define $T'$ by removing $v$ from $T$; and if $v$ is a vertex of degree two with neighbors $v_1, v_2$, then define $T'$ by removing $v$ and adding the edge $v_1 v_2$.
  In any case, we have that $h_1(T') + h_2^\ast(T') \geq h_1(T)+h^\ast_2(T)-1$ and $2h_1(T') + h^\ast_2(T') \geq 2h_1(T)+h^\ast_2(T)-2$.
  Given any $V(T')$-separating-covering system $\Fam'$ of $T'$ we can naturally obtain a system $\Fam$ of $T$ by replacing, if necessary, the appearance of the edge $v_1 v_2$ in any path by $v_1 v v_2$; and then adding the path $P = \{v\}$ to $\Fam$.
  Thus $|\Fam| = |\Fam'|+1$, so if $T'$ satisfies the lower bound then the same inequality will hold for $\Fam$.
  
To prove the first inequality, we show that $\mathcal{F}$ produces at least $h_1(T) + h_2^\ast(T)$ ends of paths in $T$, \textit{i.e.,}~vertices that are the extreme of at least one path.  
Let $P_i\in \mathcal{I}$, so $P_i$ is a maximum bare path in $T$ such that $|E(P_i)|\geq 2$.
We claim that $\mathcal{F}$ produces at least $|E(P_i)|-2$ ends of paths in $P_i$.
Indeed, every end of a path in $P_i$ kisses at most one interior edge of $P_i$ (we use here that no path of $\Fam$ has length zero), and there are exactly $|E(P_i)|-2$ interior edges of $P_i$.
By the pigeonhole principle, if there are less than $|E(P_i)|-2$ ends of paths in $P_i$, then there is at least one interior edge that is not kissed by any path, and this would contradict Lemma~\ref{lem:cotainfsepararV}. 

Now, consider $P_i \notin \mathcal{I}$, so either $P_i$ does not contain a leaf and has exactly one edge, or it contains a leaf.
We want to show that $\Fam$ produced at least $|E(P_i)| - 1$ ends of paths in $P_i$.
If $|E(P_i)| = 1$ there is nothing to show, so we can assume that $P_i$ contains a leaf.
Say $u$ is the leaf and $v$ is its unique neighbor.
The previous argument still applies, and shows that to separate the interior vertices of $P_i$, $\Fam$ produces at least $|E(P_i)| - 2$ ends of paths in $P_i$.
Also, the edge $uv$ needs to be kissed by some path in $\Fam$.
This path cannot be $\{u\}$ (because we ruled out the paths of length zero) and cannot contain $uv$, so it has an end of path precisely in the vertex $v$: this end of a path only contributes to the kissing of the edge $uv$ and was not counted before.
Including this end of a path in the count, we obtain that $\Fam$ produces at least $|E(P)|-1$ end of paths to separate all vertices in $P_i$.
Finally, every leaf needs to be the end of a path in $\mathcal{F}$ since they need to be covered. 
Therefore, $\mathcal{F}$ produces at least $h_1(T)+h_2^\ast(T)$ ends of paths and, in consequence, $\Fam$ has at least $\left\lceil(h_1(T) + h_2^\ast(T))/2\right\rceil$ paths.

The analysis to obtain the second inequality is analogous to the analysis presented in the proof of Theorem 6.1 in \cite{FKKLN2014}. The crucial point in understanding this lower bound is to note that if $u$ and $v$ are two leaves in $T$, and $P\in\Fam$ is a path whose ends are $u$ and $v$, another path different from $P$ must end in either $u$ or $v$ as $P$ does not separate them. 

By contradiction, assume the inequality is false. Since 
$|\Fam|$ is an integer, we have 
$|\Fam|< \frac{2h_1(T) + h_2^\ast(T)}{3}$.
On the other hand,  $\Fam$ produces at most $2|\Fam|$ ends of paths, and as we see earlier, it produces at least $h_1(T)+h_2^\ast(T)$ ends of paths. Therefore, we have
$h_1(T) + h_2^\ast(T) < 2\cdot (2h_1(T) + h_2^\ast(T))/3,$
thus $h_2^\ast(T) < h_1(T)$. Let us say $ h_1(T)=h_2^\ast(T) + l$. Hence, $\Fam$ produces at least $h_1(T)+h_2^\ast(T) = 2h_2^\ast(T) +l$ ends of paths in $T$. 

If we pair these $2h_2^\ast(T) +l$ ends of paths once at a time covering first the $h_2^\ast(T)$ vertices of degree two that are end of a path, we will always end up with at least $l$ leaves that we will have to pair just between them.
However, as we said before, leaves need special treatment.
If $u$ and $v$ are two leaves in $T$, and $P$ is the path in $\Fam$ whose ends are $u$ and $v$, then a different path in $\Fam$ needs to end in either $u$ or $v$ as they need to be separated.
The best way to do this is as follows.
For every three leaves, $u,v$ and $w$, include the $u$-$v$-path and the $v$-$w$-path.
Therefore, $\Fam$ contains at least two paths for every three leaves.
Thus, $\Fam$ contains at least the first $h_2^\ast(T)$ paths to cover the $2h_2^\ast(T)$ ends of paths plus the at least $2l/3$ paths to cover the remaining $l$ leaves. Then, 
$
|\Fam| \geq h_2^\ast(T) +\frac{2l}{3}=\frac{3h_2^\ast(T) + 2l }{3} = \frac{2h_1(T)+h_2^\ast(T)}{3}>|\Fam|,
$
which is a contradiction.  
\end{proof}

\subsection{Sharp instances}
Next, we examine some instances of trees where we can determine $\spathC(T, V)$ or $\spathC(T, V \cup E^\ast)$ precisely.

The following corollary is a direct consequence of Lemma \ref{lem:cotainfsepararV} and the Bunch construction~\ref{caso-planar-2}. 
Let $T=(V, E)$ be a tree with $h_2=0$ and at least two bunches of leaves, all having size at least three. If $T$ has no vertex with degree two,  then $h_2^\ast=0$.
Therefore, according to Lemma \ref{lem:cotainfsepararV}, any $V$-separating-covering system has size at least $\left\lceil\frac{2h_1}{3}\right\rceil$.
Since all the bunches of leaves of $T$ have size at least three, the Bunch construction (\Cref{caso-planar-2}) provides a $V$-separating-covering system of size $\left\lceil\frac{2h_1}{3}\right\rceil$.

\begin{corollary}
    Let $T=(V, E)$ be a tree with $h_2=0$ and at least two bunches of leaves, all having size at least three.
    Then,
        $\spathC(T, V)   = \left\lceil\frac{2h_1}{3}\right\rceil$. \hfill  
\end{corollary}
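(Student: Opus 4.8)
The plan is to obtain the corollary immediately by pairing the general lower bound of \Cref{lem:cotainfsepararV} with the upper bound given by the Bunch construction (\Cref{lema-planar-racimos}); the hypotheses of the corollary are exactly what is needed to make both bounds collapse to $\lceil 2h_1/3\rceil$.

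First I would record the elementary observation that $h_2(T)=0$ forces $h_2^\ast(T)=0$, since $0\le h_2^\ast(T)\le h_2(T)$ was noted in Section~\ref{sec:notation}. Substituting $h_2^\ast(T)=0$ into \Cref{lem:cotainfsepararV} gives
\[
\spathC(T,V)\ \ge\ \max\left\{\frac{h_1(T)}{2},\ \frac{2h_1(T)}{3}\right\}\ =\ \frac{2h_1(T)}{3},
\]
and because $\spathC(T,V)$ is an integer this upgrades to $\spathC(T,V)\ge\lceil 2h_1(T)/3\rceil$.

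For the matching upper bound I would invoke \Cref{lema-planar-racimos}, after checking its hypotheses against those of the corollary: $h_2(T)=0$ is assumed, all bunches of $T$ have size at least three by assumption, and $T\neq K_{1,3}$ because $K_{1,3}$ has a single bunch (it has no interior edges) whereas $T$ has at least two. Hence the Bunch construction yields a family $\Fam$ that covers and separates $V(T)\cup E^\ast(T)$, in particular a $V$-separating-covering system, with $|\Fam|=\lceil 2h_1(T)/3\rceil$. Combining the two inequalities gives $\spathC(T,V)=\lceil 2h_1(T)/3\rceil$, as claimed.

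There is essentially no obstacle: all the mathematical content is already in \Cref{lem:cotainfsepararV} and \Cref{lema-planar-racimos}, and the only point requiring care is verifying that the corollary's hypotheses imply $T\neq K_{1,3}$ — the ``at least two bunches'' assumption serves precisely to rule out $K_{1,3}$, the tree for which the formula genuinely fails.
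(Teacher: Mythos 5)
Your proposal is correct and takes essentially the same route as the paper: the lower bound from \Cref{lem:cotainfsepararV} with $h_2^\ast(T)=0$, matched by the Bunch construction via \Cref{lema-planar-racimos}. Your explicit check that the ``at least two bunches'' hypothesis rules out $K_{1,3}$ is a welcome detail that the paper leaves implicit.
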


\begin{lemma}\label{lem:svg2}
Let $T=(V, E)$ be a tree on at least $4$ vertices with only vertices of degree one or three. Then,  $\spathC(T,V\cup E^*) \geq h_1(T) = |E^*(T)| +3.$
\end{lemma}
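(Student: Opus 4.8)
The plan is to combine an easy degree count with a careful ``counting of path-ends''. For the arithmetic identity: since every vertex has degree $1$ or $3$, let $h_3$ be the number of degree-$3$ vertices; the handshake lemma and $n=h_1+h_3$ give $2(n-1)=h_1+3h_3$ and hence $h_1=h_3+2$, while the $h_1$ pendant edges are pairwise distinct (an edge joining two leaves would force $n=2$), so $|E^\ast(T)|=(n-1)-h_1=h_1-3$. I also note $h_2(T)=0$ and that the subgraph $T^\ast$ induced by the degree-$3$ vertices is a subtree with $h_3=h_1-2$ vertices and edge set exactly $E^\ast(T)$.

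Now fix a $(V\cup E^\ast)$-separating-covering system $\mathcal{F}$, and for a vertex $x$ let $g(x)$ be the number of paths of $\mathcal{F}$ having $x$ as an endpoint, counting a length-zero path $\{x\}$ with multiplicity $2$. Then $\sum_{x\in V}g(x)=2|\mathcal{F}|$, so it suffices to prove $\sum_x g(x)\ge 2h_1$. If $T=K_{1,3}$ (the case $|V(T^\ast)|=1$) I argue directly: writing $c$ for the centre and $a_1,a_2,a_3$ for the leaves, each $\mathcal{F}(a_i)$ is a nonempty proper subset of $\mathcal{F}(c)$ and the three are distinct, so $\mathcal{F}(c)$ has at least three nonempty proper subsets, forcing $|\mathcal{F}(c)|\ge 3$ and $|\mathcal{F}|\ge 3=h_1$. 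So I may assume $h_1\ge 4$, i.e.\ $|V(T^\ast)|\ge 2$, and then every vertex of $T^\ast$ has degree $1$, $2$ or $3$ in $T^\ast$.

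The heart of the argument is a family of \emph{local} inequalities. Partition $V(T)$ into blocks: each leaf $v$ of $T^\ast$ (equivalently, a degree-$3$ vertex of $T$ with two pendant leaves $u_1,u_2$) gives the block $\{v,u_1,u_2\}$; each degree-$2$ vertex $v$ of $T^\ast$ (a degree-$3$ vertex of $T$ with one pendant leaf $u$) gives the block $\{v,u\}$; each degree-$3$ vertex of $T^\ast$ forms a singleton block. I will show $g(v)+g(u_1)+g(u_2)\ge 4$ for blocks of the first type, $g(v)+g(u)\ge 2$ for blocks of the second type, and (trivially) $g(v)\ge 0$ for the singletons. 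Granting this, and writing $\ell,m_2,m_3$ for the numbers of degree-$1,2,3$ vertices of $T^\ast$, summation over blocks gives $2|\mathcal{F}|=\sum_x g(x)\ge 4\ell+2m_2$; since the number of pendant leaves of $T$ is $h_1=2\ell+m_2$, the right-hand side equals $2h_1$, and the lemma follows.

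To establish the local inequalities one analyses $\mathcal{F}$ near the degree-$3$ vertex $v$ of the block: classify the paths of $\mathcal{F}$ through $v$ by which of the (at most three) edges at $v$ they use — using no edge means the path is $\{v\}$, using one edge means $v$ is an endpoint, using two means $v$ is interior — and read off the induced partition of $\mathcal{F}(v)$. Using the tree identities $\mathcal{F}(xy)=\mathcal{F}(x)\cap\mathcal{F}(y)$ for an edge $xy$ and $\mathcal{F}(u)\subseteq\mathcal{F}(w)$ for a pendant edge $uw$ with $u$ a leaf, one expresses the signatures $\mathcal{F}(v)$, $\mathcal{F}(u_i)$ and the signature of each interior edge incident to $v$ in terms of the sizes of the parts; the demand that these three or four signatures be pairwise distinct and nonempty (and that pendant leaves be covered) becomes a small system of linear inequalities over the nonnegative integers, whose feasible points I check all satisfy $g(v)+g(u_1)+g(u_2)\ge 4$ (resp.\ $g(v)+g(u)\ge 2$). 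The delicate point — and the step I expect to cost the most work — is that the bounds $4$ and $2$ are attained, so this finite check must be done with care: in particular one must handle length-zero paths (this is precisely why $\{x\}$ is counted with multiplicity $2$ in $g$), and one must fully use the separation constraint coming from the interior edge(s) at $v$, since dropping it would lower the bound to $3$ (resp.\ $1$).
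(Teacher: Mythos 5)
Your argument is correct in substance but follows a genuinely different route from the paper. The paper proves the bound by induction along the recursive structure of $\mathcal{C}_{1,3}$: starting from $K_{1,3}$, each tree in the class is obtained by attaching two new leaves to a former leaf $r$, and the inductive step observes that any optimal family for the larger tree must contain a path meeting $r$ but avoiding the newly interior edge $pr$ (to separate $r$ from $pr$), which can be discarded to produce a too-small family for the smaller tree. Your proof instead is a one-shot double count of path-ends, $\sum_x g(x)=2|\mathcal{F}|$, reduced to local inequalities around each degree-$3$ vertex grouped with its pendant leaves; I checked the two local feasibility analyses (the bounds $4$ and $2$ per block) and they do hold, precisely because of the separations $\mathcal{F}(v)\neq\mathcal{F}(e)$ and $\mathcal{F}(u_i)\neq\mathcal{F}(e)$ for the interior edge(s) $e$ at $v$, as you anticipate. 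Your approach is more self-contained and makes transparent \emph{where} the $h_1$ path-ends must sit, at the cost of a finite case check; the paper's induction is shorter but leans on the (somewhat delicate) claim that deleting one path from the restricted family still separates the smaller tree. One small repair: in your $K_{1,3}$ base case, $\mathcal{F}(a_i)\subseteq\mathcal{F}(c)$ can fail if the length-zero path $\{a_i\}$ belongs to $\mathcal{F}$; but the conclusion $|\mathcal{F}|\ge 3$ survives, since the four signatures $\mathcal{F}(a_1),\mathcal{F}(a_2),\mathcal{F}(a_3),\mathcal{F}(c)$ are distinct nonempty subsets of $\mathcal{F}$ and a $2$-element set has only three such subsets. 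For the write-up you should display the block-local case analysis explicitly (the parameters $n_0,n_1,n_2,n_e,n_{12},n_{1e},n_{2e},m_1,m_2$ and the handful of infeasible configurations), since that is where all the content lives.
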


\begin{proof} 
Let $\mathcal{C}_{1,3}$ be the family of trees with degree one and degree three vertices only and at least $4$ vertices. Consider the following inductive scheme:
 The 3-leaf star $K_{1,3}$ belongs to $\mathcal{C}_{1,3}$; and    given a tree  $T \in \mathcal{C}_{1,3}$, and a leaf $r$ in $T$, add two new leaves to $r$ to obtain a new tree in $\mathcal{C}_{1,3}$.
A tree can be generated in this fashion if and only if it belongs to $\mathcal{C}_{1,3}$.
Using this, an easy induction (with $K_{1,3}$ as base) shows that $h_1(T) = |E^*(T)|+3$ for any $T$ in $\mathcal{C}_{1,3}$.

We also show the inequality by induction starting from $K_{1,3}$. An exhaustive search shows that $\spathC(K_{1,3},V\cup E^*) \geq h_1(K_{1,3})=3$. 

Let $T$ be a tree in $\mathcal{C}_{1,3}$, assume that $\spathC(T,V\cup E^*)\geq h_1(T)$ and let $T'$ be a tree obtained from $T$, connecting two new leaves $u$ and $v$ to a leaf $r$ of $T$.
Suppose that $\spathC(T',V\cup E^*)< h_1(T')=h_1(T)+1$. Hence, there is a family $\mathcal{F}'$ that covers and separates $V(T')\cup E^*(T')$ such that $|\mathcal{F}'|\leq h_1(T)$.
In $T'$, $r$ has three neighbors, $u, v$, and $p$. The edge $pr$ is an interior edge in $T'$. Therefore, $\mathcal{F}'$ contains a path containing $r$ but not the edge $pr$, say $P^*$. Now, $\mathcal{F}'\setminus P^*$ separates $V(T)\cup E^*(T)$ since $pr$ is not an interior edge in $T$ and therefore $r$ does not need to be separated from $pr$. But this is a contradiction because, in that case, we obtain $\spathC(T,V\cup E^*)\leq|\mathcal{F}'\setminus P^*|\leq h_1(T)-1< h_1(T)$.
\end{proof}

What follows is a direct consequence from Lemma \ref{lem:svg2} and the Planar construction (\Cref{Construction-planar}).
\begin{corollary}
   If $T$ is a tree on at least $4$ vertices with only vertices of degree one or three,
 then
   $\spathC(T,V\cup E^*) = h_1(T)$.  
\end{corollary}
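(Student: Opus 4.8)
The plan is to obtain both inequalities essentially for free from results already established. For the lower bound, note that a tree $T$ on at least $4$ vertices all of whose vertices have degree one or three is precisely a member of the family $\mathcal{C}_{1,3}$ appearing in the proof of \Cref{lem:svg2}. Hence that lemma applies verbatim and gives $\spathC(T, V\cup E^*) \geq h_1(T)$ (and moreover $h_1(T) = |E^*(T)|+3$).

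For the upper bound I would invoke the Planar construction. Since $T$ has no vertex of degree two, $h_2(T) = 0$; and since $T$ has at least $4$ vertices and maximum degree exactly $3$, it has at least $\Delta(T) = 3$ leaves, so $h_1(T) \geq 3$. These are exactly the hypotheses of \Cref{lema-planar-general}, which therefore yields a family $\Fam$ of paths obtained from the Planar construction (\Cref{Construction-planar}) with $|\Fam| = h_1(T)$ that separates and covers $V(T) \cup E^*(T)$ (indeed it also separates and covers $E(T)$, but we only need the statement about $V\cup E^*$). This gives $\spathC(T, V\cup E^*) \leq h_1(T)$, and combining with the lower bound proves the claimed equality.

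There is no real obstacle in this argument; the only points requiring a moment's care are the two small structural observations — that $T \in \mathcal{C}_{1,3}$ so that \Cref{lem:svg2} is applicable, and that $h_1(T)\geq 3$ so that \Cref{lema-planar-general} is applicable — both of which follow immediately from the degree hypothesis together with $|V(T)| \geq 4$. Thus the proof can be written in just a few lines, simply citing \Cref{lem:svg2} for the lower bound and \Cref{lema-planar-general} (via \Cref{Construction-planar}) for the upper bound.
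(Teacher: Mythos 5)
Your proof is correct and follows exactly the route the paper intends: the lower bound is \Cref{lem:svg2} (your tree is by definition a member of $\mathcal{C}_{1,3}$), and the upper bound is the Planar construction via \Cref{lema-planar-general}, whose hypotheses $h_2(T)=0$ and $h_1(T)\geq 3$ you verify correctly. Nothing further is needed.
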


Let $\mathcal{C}_{1,3}^\ast$ be the following family of trees. Starting from a tree
$T=(V, E)$ in $\mathcal{C}_{1,3}$, we construct a tree $T^\ast$ in
$\mathcal{C}_{1,3}^\ast$ by replacing each interior edge $uv\in E^*(T)$ by a path of
length two $ux_{uv}v$, where $x_{uv}$ is a new vertex that is adjacent only to $u$ and
$v$ in $T^\ast$.
A $(V\cup E^*)$-separating-covering family $\Fam$ for $T$ can be modified to obtain a
$V$-separating-covering family $\Fam^\ast$ for $T^\ast$ and vice versa, such that $|\Fam|
= |\Fam^\ast|$.
Therefore, we have the following corollary.   

\begin{corollary} \label{corollary:c13star}
If $T^\ast$ is in $\mathcal{C}_{1,3}^\ast$, then $
    \spathC(T^\ast,V) = h_1(T^\ast)$. 
    \hfill 
\end{corollary}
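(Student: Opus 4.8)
The plan is to combine the preceding corollary, which states that $\spathC(T,V\cup E^*)=h_1(T)$ for every $T\in\mathcal{C}_{1,3}$, with a size-preserving correspondence between $(V\cup E^*)$-separating-covering families of $T$ and $V$-separating-covering families of $T^\ast$. First note that $h_1(T^\ast)=h_1(T)$: the subdivisions only touch interior edges and introduce degree-$2$ vertices, so the set of leaves is unchanged. The correspondence uses two maps. \emph{Lifting} a path $P$ of $T$: replace each interior edge $uv$ traversed by $P$ with the two-edge path $ux_{uv}v$, obtaining a path $P^\ast$ of $T^\ast$; then $v\in P^\ast\iff v\in P$ for every $v\in V(T)$, while $x_{uv}\in P^\ast\iff uv\in P$. \emph{Projecting} a path $Q$ of $T^\ast$ whose two endpoints lie in $V(T)$: contract all its (then necessarily internal) subdivision vertices; this recovers a path of $T$ and is inverse to lifting.

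For the upper bound, take an optimal $(V\cup E^*)$-separating-covering family $\Fam$ of $T$ (so $|\Fam|=h_1(T)$ by the preceding corollary) and set $\Fam^\ast=\{P^\ast:P\in\Fam\}$. Lifting is injective, so $|\Fam^\ast|=h_1(T)=h_1(T^\ast)$. By the membership translation above, for $w\in V(T^\ast)$ the family of paths of $\Fam^\ast$ containing $w$ equals, after the obvious identification, $\Fam(v)$ when $w=v\in V(T)$ and $\Fam(uv)$ when $w=x_{uv}$; since $\Fam$ covers and separates $V(T)\cup E^*(T)$, these sets are nonempty and pairwise distinct, hence $\Fam^\ast$ covers and separates $V(T^\ast)$. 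Thus $\spathC(T^\ast,V)\le h_1(T^\ast)$. (Alternatively, the Planar construction of \Cref{Construction-planar} applied directly to $T^\ast$ can be shown to work, using that in $T^\ast$ two distinct edges are covered by exactly the same paths of that construction only if they are the two halves of a subdivided edge — so only \emph{edges}, never vertices, fail to be separated.)

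For the lower bound I would start from an optimal $V(T^\ast)$-separating-covering family $\Fam^\ast$ and build from it a $(V\cup E^*)$-separating-covering family of $T$ of size at most $|\Fam^\ast|$; the preceding corollary then gives $|\Fam^\ast|\ge h_1(T)=h_1(T^\ast)$. Projecting each path of $\Fam^\ast$ works verbatim once every path has both endpoints in $V(T)$, so \textbf{the main obstacle is a normalization step}: one must argue that an optimal $V(T^\ast)$-separating-covering family may be chosen so that no path ends at a subdivision vertex (a path ending at $x_{uv}$ ``sees only half'' of the edge $uv$ and projects unfaithfully). I would handle this by surgery at a bad endpoint $x_{uv}$: if two paths of the family end at $x_{uv}$ from opposite sides, merge them into one path through $x_{uv}$ (this removes two bad endpoints and decreases the size); otherwise extend, or truncate, the offending path at $x_{uv}$ — checking each time, using $d_{T^\ast}(x_{uv})=2$ and $d_{T^\ast}(u)=d_{T^\ast}(v)=3$ together with the standard removal of zero-length paths as in the proof of \Cref{lem:cotainfsepararV}, that covering and separation of $V(T^\ast)$ are preserved. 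Once normalized, projection is a bijection with the membership translation of the first paragraph, and the image family covers and separates $V(T)\cup E^*(T)$, finishing the proof. A more self-contained route would prove $\spathC(T^\ast,V)\ge h_1(T^\ast)$ directly by induction on the generation of $\mathcal{C}_{1,3}^\ast$ (base $K_{1,3}$; step: attach two leaves to a leaf of the underlying $\mathcal{C}_{1,3}$-tree and subdivide the edge that becomes interior), mirroring \Cref{lem:svg2} by exhibiting, in any too-small separating-covering family, a single path whose deletion still separates and covers the smaller tree; I expect the same surgery to be the crux there as well.
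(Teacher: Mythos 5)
Your overall strategy coincides with the paper's: the corollary is justified there by asserting a size-preserving correspondence between $(V\cup E^*)$-separating-covering families of $T\in\mathcal{C}_{1,3}$ and $V$-separating-covering families of $T^\ast$, and you set out to prove both directions of that correspondence. Your lifting direction (the upper bound) is correct and complete: $w\in P^\ast$ iff $w\in P$ for $w\in V(T)$, $x_{uv}\in P^\ast$ iff $uv\in P$, so the lifted family inherits covering and separation of $V(T^\ast)$ from covering and separation of $V(T)\cup E^*(T)$, and $h_1(T^\ast)=h_1(T)$ since only interior edges are subdivided.

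The genuine gap is in the lower bound, exactly where you locate the crux, and your proposed surgery does not close it. The merge step is not separation-preserving: if $P_1$ ends at $x_{uv}$ from the $u$-side and $P_2$ from the $v$-side, then after replacing them by the single path $P_1\cup P_2$, any pair $s\in P_1$, $t\in P_2$ that was distinguished only by $P_1$ or by $P_2$ becomes indistinguishable. Concretely, take $s=u$, $t=v$ and suppose every other path of $\Fam^\ast$ either traverses the whole subdivided edge $ux_{uv}v$ or avoids it entirely; then the merged family no longer separates $u$ from $v$, and its projection fails to separate $u$ from $v$ in $T$. The alternatives you offer (truncating or extending the offending path at $x_{uv}$) alter $\Fam^\ast(x_{uv})$ or $\Fam^\ast(v)$ and have their own failure modes for the separation of $x_{uv}$ (i.e., of the edge $uv$) from its endpoints. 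What is missing is a case analysis showing that for every bad endpoint at least one of these operations can be applied safely, or alternatively a direct argument that the projected family separates $V(T)\cup E^*(T)$ even when the normalized family no longer separates all of $V(T^\ast)$; writing that one should ``check each time that covering and separation are preserved'' presupposes the statement to be proved. Your self-contained alternative (redoing the induction of \Cref{lem:svg2} directly on the members of $\mathcal{C}_{1,3}^\ast$, deleting one path per pair of attached leaves) is the more promising route to a rigorous lower bound, but as you yourself note it meets the same endpoint obstacle and is not carried out.
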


\subsection{Proof of Theorem~\ref{theorem:edgevertexsep-trees}}
In this subsection, we will prove \Cref{theorem:edgevertexsep-trees}.
The lower bound is shown in Lemma \ref{lem:cotainfsepararV}. Hence, we need to show the upper bound. 

\begin{proof}[Proof of Theorem~\ref{theorem:edgevertexsep-trees}]

First, we consider the case when $T$ is the \emph{star} with $n\ge 4$ leaves. In this case, the Bunch construction has the correct size and separates $V$.

In order to prove the upper bound when the tree has at least one interior edge, we define a new construction. Let $T=(V,E)$ be a tree and $\mathcal{P} = \{P_1,\ldots, P_r\}$ be the family of maximal bare paths in $T$ that partitions $E$. We construct a $V$-separating-covering system as follows. \medskip

\noindent \emph{Step 1:}  We transform $T$ into a tree $T'$ by contracting every path $P_i$ to an edge $e_i$. Note that $T'$ is a tree with no vertices of degree two and $h_1(T')=h_1(T)$.
Therefore, we can apply the Bunch construction (\Cref{caso-planar-2}) to $T'$ to obtain a system $\Fam'$ of size $\lceil 2h_1(T')/3\rceil = \lceil2h_1(T)/3\rceil $. Lemma~\ref{lema-planar-racimos} implies that $\Fam'$ is a $V(T')\cup E^\ast(T')$-separating-covering system. \medskip

\noindent \emph{Step 2:}
We transform $\Fam'$ into a $V(T)$-separating-covering system. Each vertex in $T'$ is also a vertex in $T$. Therefore, if we replace each $u$-$v$-path in $\mathcal{F}'$ with the corresponding $u$-$v$-path in $T$, we obtain a system $\mathcal{F}^*$ that covers $V(T)\cup E(T)$ and separates all vertices with degree different than two. Note that $|\mathcal{F}'|=|\mathcal{F}^*|$. 

Now, for each vertex $u\in P_i$ of degree two, we have that $\Fam^*(u)=\Fam'(e_i)$.
Since $\mathcal{F}'$ separates $E(T')$, $u$ and $v$ are separated in $\mathcal{F}^*$ when they belong to different bare paths. \medskip

 \noindent \emph{Step 3:} In order to separate degree-two vertices in $T$ that belong to the same bare path,  we need to add some new paths to $\mathcal{F}^*$. We proceed as follows. For each bare path $P_i\in \mathcal{I}$, we consider all its degree two vertices to be unmarked except for one (any such vertex will do).
 Now, we pick two unmarked vertices of degree two belonging to different bare paths, say $u$ and $v$, and we add the $u$-$v$-path in $T$ to $\mathcal{F}^*$, then we mark $u$ and $v$.
 We do this in such a way that unmarked vertices in the same bare path are always consecutive.
 We repeat this process until all degree two vertices in $T$ are marked, or until all unmarked vertices of degree two belong to the same bare path. \medskip 
 
\noindent \emph{Refinement of step 3:} The paths just added do not necessarily separate the marked degree two vertices inside a given bare path, and we need to perform a refinement. 

 Let $u$ and $v$ be two such vertices.
 Assume $u$ is not one of the initially marked vertices.
 Let $P$ be the $u$-$x$-path added when $u$ was marked, and let $Q$ be the $v$-$y$-path added when $v$ was marked (if any). 
 We have that $u$ and $v$ are not separated if and only if  $v\in P$ and $u\in Q$ (if $Q$ exists), that is, paths $P$ and $Q$ overlap inside the bare path.
 We delete $P$ and $Q$ from the family and add the $u$-$y$-path and the $v$-$x$-path in $T$, say $P'$ and $Q'$, respectively.
 If $Q$ and $y$ do not exist, only add $Q'$.
 Now, $P'$ and $Q'$ run in the opposite direction and do not overlap, thus $u\notin Q'$.
 We repeat this refinement until eliminating all the overlapping paths inside each bare path, and in such a way that the initially marked vertex of each $\mathcal{I}$ path is not contained in any of the paths that do not come from $\Fam'$. \medskip
 
 \noindent \emph{Step 4:} If there are still some unmarked degree-two vertices, we use the construction presented in \cite{FoucaudKovse2013} to separate them and add all the corresponding paths to $\mathcal{F}^*$.
 This will work because these vertices are all consecutive and belong to the same bare path.
 If we have $k$ of these last vertices, this construction adds $\left\lceil\frac{k+1}{2}\right\rceil$ new paths, and these paths only cover these $k$ vertices.

 Call $\mathcal{F}$ the family at the end of all steps.
We prove that $\mathcal{F}$ is a $V$-separating-covering system.
For simplicity, we say that $u$ \emph{belongs} to $e_i$ in $T'$, when $u \in P_i$ and $u$ has degree two. 

To see that $\Fam$ covers $V(T)$, note that if $u$ has a degree different than two in $T$, then $\emptyset\neq \Fam'(u)\subseteq \Fam(u)$\footnote{Even though $\Fam$ and $\Fam'$ are families of paths defined in different trees, we consider the obvious map between paths in $T'$ and paths in $T$ to say $\Fam'(u)\subseteq \Fam(u)$.}.
On the other hand, if $u$ is a degree two vertex in $T$ that belongs to $e_i$ in $T'$, then $\emptyset \neq \Fam'(e_i)\subseteq \Fam(u)$.
Therefore, every vertex in $T$ is covered by $\mathcal{F}$. 
     
  In order to see that $\Fam$ separates $V(T)$, assume by contradiction that there are two vertices $u$ and $v$ in $T$ such that $\Fam(u)=\Fam(v)$.
  First, we see that neither $u$ nor $v$ can have a degree different than two.
  Indeed, if $u$ and $v$ have degree different than two, then $\Fam'(u)\subseteq \Fam(u)$, and $\Fam'(v)\subseteq \Fam(v)$.
  Since $\Fam'$ separates $u$ and $v$, they are also separated in $\Fam$.
  On the other hand, if $v$ has degree two in $T$ and it belongs to $e_i$ in $T'$, then $\Fam'(e_i)\subseteq \Fam(v)$, and since $\Fam'$ separates $e_i$ from every vertex $u$, except if $u$ is an adjacent leaf, we are done because  in that case, steps 3 and 4 add the necessary paths to separate $v$ from $u$. 
  
Hence, $u$ and $v$ are two vertices of degree two in $T$.
If $u$ and $v$ belong to different bare paths $P_i$ and $P_j$, respectively, we know there is a path $P$ in $\Fam'(e_i)\Delta \Fam'(e_j)$, because $\Fam'$ separates $E(T')$.
Thus $\Fam$ separates $u$ and $v$ as well.
Therefore, $u$ and $v$ belong to the same bare path $P_i$.
But in that case, the new paths added in Step 3 and Step 4 separate them.
Therefore, these two vertices cannot exist and $\Fam$ is a $V(T)$-separating-covering system.
  
To conclude the proof, note that \[\spathC(T,V) \leq |\Fam|=\lceil 2h_1(T)/3\rceil + \lceil \left(h_2^\ast(T)+1\right)/2\rceil\] which are the paths in $\Fam'$ (first term) plus the paths added in Step 3, the refinement of Step 3, and Step 4 (second term of the sum).
\end{proof}

\section{Separating vertices in random graphs}\label{sec:randomgraphs}

This section presents our study of $\spathC(G, V)$ when $G = G(n,p)$ is a binomial random graph. 
We recall that $\spathC(K_n, V) = \lceil \log_2 (n+1) \rceil$, as shown in \cite{FoucaudKovse2013}, is done by finding a separating set family of $V(K_n)$ and then a spanning path in each set.
We use a similar strategy.

\begin{proof}[Proof of Theorem~\ref{theorem:random}]
    We prove item \ref{item:randomupper} first.
    Suppose $n = 2k+q$ with $q \in \{0,1\}$.
    Let $A, B, C \subseteq V(G)$ be a partition of $V(G)$ into sets of size $k, k, q$ respectively.
    Let $t = \lceil \log_2 k \rceil$ and let $\mathcal{Q} = \{ Q_1, \dotsc, Q_t \}$ be a separating set system of the set $[k]$
    (i.e. $Q_1, \dotsc, Q_t \subseteq [k]$, and for each distinct $i, j \in [k]$ there exists $\ell \in [t]$ such that $|Q_\ell \cap \{i, j\}| = 1$).
    Enumerate $A = \{a_1, \dotsc, a_k\}$, $B = \{b_1, \dotsc, b_k\}$ arbitrarily.
    For each $i \in [t]$, let $S_i \subseteq V(G)$ be
    $ S_i = \{ a_j : j \in [k] \cap Q_i\} \cup \{ b_j : j \in [k] \setminus Q_i\}$, 
    and set $S_{t+1} = A$.
    If $C$ is non-empty, define $S_{t+2} = C$.
    
    We claim that $\{S_1, \dotsc, S_{t+2}\}$ is a separating set system of $V(G)$ of size at most $\lceil \log_2 n \rceil + 1$.
    Note first that $k \leq n/2$, so $t+2 = \lceil \log_2 k \rceil + 2 \leq \lceil \log_2 n \rceil+1$.
    Now we show that $\{S_1, \dotsc, S_{t+2}\}$ separates the vertices of $V(G)$.
    Indeed, let $u, v$ be arbitrary and distinct vertices of $V(G)$.
    If $\{u, v\} \cap C \neq \emptyset$, then $\{u,v\}$ are separated by $S_{t+2} = C$.
    If $u \in A, v \in B$, then $\{u,v\}$ are separated by $S_{t+1} = A$.
    If $u, v \in A$, then there exist distinct $i, j$ such that $u = a_i, v = a_j$.
    Since $\mathcal{Q}$ is a separating set system of $[k]$, there exists $Q_\ell$ such that $|\{ i, j\} \cap Q_\ell| = 1$, but by construction this implies that $|\{a_i, a_j\} \cap S_\ell| = 1$, so $u, v$ are separated by $S_\ell$.
    The case where $u, v \in B$ is analogous.
    
    To conclude, we show that w.h.p., for every $i \in [t+1]$, there exists a path $P_i \subseteq G$ with $V(P_i) = S_i$.
    Observe that $|S_i| = k$ for each $i \in [t+1]$.
    We argue that w.h.p., for each $i \in [t+1]$, the induced graph $G[S_i]$ is Hamiltonian.
    Indeed, fix $i \in [t+1]$ and let $G_i := G[S_i]$.
    $G_i$ corresponds to a binomial random graph on $k$ vertices with probability $p$.
    By a result of Alon and Krivelevich~\cite{AlonKrivelevich2020}, we know that
    \begin{eqnarray*}
        \prob[G_i \text{ is not Hamiltonian}]
         &=& (1 + o(1))\prob[ \delta(G_i) < 2 ] \\
         &=& \Theta( k p (1 - p)^k)\\ &\leq& O( n e^{-pk}),
    \end{eqnarray*}
    where in the last step we used $(1 - p)^k \leq e^{-pk}$ and $kp \leq k \leq n$.
    Since $p \geq (2 \ln n + \omega(\ln \ln n)) / n$ and $k \geq (n-1)/2$, we conclude
    $e^{-pk} = o((n \log n)^{-1})$, and thus $\prob[G_i \text{ is not Hamiltonian}] = o((\log n)^{-1})$.
    Thus we can use a union bound over the $t+1 = O(\log n)$ events to conclude that w.h.p. each $G[S_i]$ is Hamiltonian, as required.
    
    Now we prove item \ref{item:randomlower}.
    Note first that, deterministically, if $I$ is the set of isolated vertices of a graph $G$,
    then $\spathC(G,V) \geq |I|$ (since an isolated vertex can only be covered by a one-vertex path using that vertex).
    Thus it is enough to see that in this range of probability the random graph $G$ has $\omega( \ln n)$ isolated vertices.
    Let $X$ be the random variable counting the number of isolated vertices of $G$.
    Standard calculations (see, e.g.~\cite[Example 6.28]{JansonLuczakRucinski2000}) show that for $p = (\ln n - c)/n$, we have $\expectedvalue[X] \approx e^{c}$ and $\variance[X] = \expectedvalue[X] + o(1)$.
    In particular, if $\expectedvalue[X] = \omega(1)$ we have $\variance[X] = o(E[X]^2)$,
    and thus by Chebyshev's inequality w.h.p. we have $X = (1 + o(1)) \expectedvalue[X]$.
    By using $p = (\ln n - \omega(\ln \ln n))/n$ we have that $\expectedvalue[X] = \omega( \ln n)$,
    and by the previous argument we have $X = \Omega( \expectedvalue[X]) = \omega( \ln n)$, as desired.
\end{proof}

\section{Future work} \label{sec:conclusions}

The task of determining an exact value for $\spathC(T, V)$ and for $\spathC(T, V\cup E)$ for every tree $T$, as we did for $\spathC(T, E)$, remains open.
On the other hand, if $\spathC(n, S)$ denotes the maximum of $\spathC(G, S)$ taken over all $n$-vertex graphs $G$, then, an interesting question would be to find the values of $\spathC(n, V \cup E)$.
Note that if $\mathcal{F}$ is an $E$-separating system of $G$, we can add to $\Fam$ the family of all zero-length paths $\{ v : v \in V(G) \}$ to obtain a $(V \cup E)$-separating system of size $|\Fam| + n$.
Using this and the result of $\spathC(n, E) \leq 19n$ mentioned in the introduction, we  can deduce that $\spath(n, V \cup E) \leq 20n$ holds, and to narrow this further could be interesting.

Complexity questions also remain open. The most general question is to find the complexity of the following problem: given a graph $G=(V, E)$, a set $S\subseteq V\cup E$, and an integer $k$, is it true that $\spathC(G, S)\leq k$? All avenues indicate that this problem is NP-complete. We also believe that deciding if $\spathC(G, E)\leq k$ and deciding if $\spathC(G, V)\leq k$ should be NP-complete.

\sloppy\printbibliography

\end{document}